\documentclass[11pt]{amsart}

\usepackage{amssymb, amsmath, amsthm, amsfonts}

\usepackage{amsthm}
\usepackage{amssymb}

\usepackage[all]{xy}

\parindent 0pt

\usepackage{verbatim}
\usepackage{hyperref}
\hypersetup{
    colorlinks=true,       
    linkcolor=blue,          
    citecolor=blue,        
    filecolor=blue,      
    urlcolor=blue           
}


\newtheorem{theorem}{Theorem}[section]
\newtheorem{cor}[theorem]{Corollary}
\newtheorem{lem}[theorem]{Lemma}
\newtheorem{prop}[theorem]{Proposition}

\theoremstyle{definition}

\newtheorem{ex}[theorem]{Example}

\theoremstyle{remark}
\newtheorem{rem}[theorem]{Remark}
\newtheorem{rems}[theorem]{Remarks}

\newcommand{\HH}{\mathrm{H}}
\newcommand{\End}{\mathrm{End}}

\newcommand{\bF}{\mathbb{F}}
\newcommand{\bP}{\mathbb{P}}

\newcommand{\bZ}{\mathbb{Z}}
\newcommand{\bQ}{\mathbb{Q}}
\newcommand{\bR}{\mathbb{R}}
\newcommand{\bC}{\mathbb{C}}
\newcommand{\bV}{\mathbb{V}}

\newcommand{\cA}{\mathcal{A}}
\newcommand{\cB}{\mathcal{B}}
\newcommand{\cC}{\mathcal{C}}
\newcommand{\cO}{\mathcal{O}}
\newcommand{\cD}{\mathcal{D}}

\newcommand{\cF}{\mathcal{F}}

\newcommand{\cL}{\mathcal{L}}
\newcommand{\cM}{\mathcal{M}}

\newcommand{\cV}{\mathcal{V}}

\newcommand{\vol}{\mathrm{vol}}
\newcommand{\arrow}{\rightarrow}
\newcommand{\rk}{\mathrm{rk}\,}

\newcommand{\Gl}{\mathrm{Gl}\,}

\newcommand{\sbt}{\,\begin{picture}(-1,1)(-1,-3)\circle*{3}\end{picture}\ }

\newcommand{\Gr}{\mathrm{Gr}}
\newcommand{\Supp}{\mathrm{Supp}}
\newcommand{\rank}{\mathrm{rank}}


\newcommand{\bD}{\mathbb{D}}
\newcommand{\Ric}{\mathrm{Ric \,}}
\newcommand{\Zero}{\mathrm{Zero \,}}
\newcommand{\K}{\mathrm{K}}


\title[Increasing hyperbolicity from VHS with level structures]{Increasing hyperbolicity of varieties supporting a variation of Hodge structures with level structures}

\author[Y. Brunebarbe]{Yohan Brunebarbe}
\address{\noindent Y. Brunebarbe:  Dept. of Mathematics, Univ. Bordeaux, Talence, France.}
\email{yohan.brunebarbe@math.u-bordeaux.fr}


\begin{document}
\begin{abstract}
Looking at the finite \'etale congruence covers $X(p)$ of a complex algebraic variety $X$ equipped with a variation of integral polarized Hodge structures whose period map is quasi-finite, we show that both the minimal gonality among all curves contained in $X(p)$ and the minimal volume among all subvarieties of $X(p)$ tend to infinity with $p$. This applies for example to Shimura varieties, moduli spaces of curves, moduli spaces of abelian varieties, moduli spaces of Calabi-Yau varieties, and can be made effective in many cases. The proof goes roughly as follows. We first prove a generalization of the Arakelov inequalities valid for any variation of Hodge structures on higher-dimensional algebraic varieties, which implies that the hyperbolicity of the subvarieties of $X$ is controlled by the positivity of a single line bundle. We then show in general that a big line bundle on a normal proper algebraic variety $\bar X$ can be made more and more positive by going to finite covers of $\bar X$ defined using level structures of a local system defined on a Zariski-dense open subset.

  \end{abstract}
\maketitle
\tableofcontents

\section{Introduction}

\subsection{Main results}
Let $X$ be a complex algebraic variety equipped with a variation of complex polarized Hodge structures $\bV = (\cL_{\bC}, \cF^\bullet, h)$. When the associated period map $\tilde X \arrow \cD$ from the universal cover of $X$ to the corresponding period domain has discrete fibers, then $X$ satisfies many hyperbolicity properties. For example, every holomorphic map $\bC  \arrow X$ is constant \cite[Corollary 9.4]{Griffiths-Schmid} and every desingularization of a subvariety of $X$ is of log-general type and has a big logarithmic cotangent bundle \cite{Zuo_neg, Bruni_Crelle, Bruni-Cado}. However, nothing similar can be said in general about the compactifications of $X$, or about the birational properties of $X$ and its subvarieties. This is already seen by looking at the weight $1$ variation of Hodge structures on $\bP^1 \backslash \{0,1, \infty\}$ obtained from the Legendre family of elliptic curves. More generally, if $\mathrm{A}_1(n)$ denotes the coarse moduli space of elliptic curves with a symplectic level-n structure (that is a symplectic basis of the degree $1$ cohomology with coefficients in $\bZ \slash n \bZ$), then its geometric genus\footnote{The geometric genus of an integral curve $C$ is by definition the genus of the smooth projective curve which is birational to $C$.} $g(\mathrm{A}_1(n))$ is bigger than $1$ only when $n > 6$. However, the genus and even the gonality\footnote{The gonality of an integral curve $C$ is the minimal degree of a dominant rational map $C \dasharrow \bP^1$.} of $\mathrm{A}_1(n)$ tends to infinity with $n$, see \cite{Abramovich}. In this paper, we prove a vast generalization of this phenomenon: the results below show that by going to level covers, $X$ and its subvarieties get more and more hyperbolic, and that this holds in a uniform way. We refer to \cite{Nadel, Noguchi91, Hwang-To-uniform_boundedness, Rousseau13, Bakker-Tsimerman, AV16, Bruni_AENS, cadorel2016symmetric, cadorel2018subvarieties, deng2020big} for results of a similar flavour.\\

Assume that $\cL_{\bC} = \cL_\bZ \otimes_{\bZ } \bC$ for a torsion-free $\bZ $-local system $\cL_\bZ $ on $X$ of rank $r = \rk(\cL_{\bC})$. For every positive integer $n$, let $\cL_{\bZ / n \bZ} := \cL \otimes_{\bZ } (\bZ / n \bZ)$ be the induced $\bZ / n \bZ$-local system. We denote by $X(n) = X(\cL_{\bZ / n \bZ})$ the total space of the local system of sets $\cB(n) := \mathcal{I}som_{\bZ / n \bZ}\left((\bZ / n \bZ)_X^{r},\cL_{\bZ / n \bZ} \right)$, so that there is a natural free action of the group $\Gl(r, \bZ / n \bZ)$ on $X(n)$ such that $X$ is identified with the quotient. An $n$-level structure on $\cL_\bZ$ is then by definition a global section of $\cB(n)$, or equivalently a section of the finite \' etale map $X(n) \arrow X$. \\

Our main result in this paper is the following:
\begin{theorem}\label{main result}
Let $\cL_\bZ$ be a torsion-free $\bZ$-local system on a complex algebraic variety $X$, and assume that $\cL_{\bC}$ underlies a variation of complex polarized Hodge structures whose period map has discrete fibres. Then, given a positive number $v$, for all but finitely many prime numbers $p$, every integral subvariety Y of X(p) satisfies $\vol(Y) \geq v$. In particular, for all but finitely many prime numbers $p$, all subvarieties of $X(p)$ are of general type. 
\end{theorem}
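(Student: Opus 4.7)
The strategy is essentially dictated by the two ingredients announced in the abstract: a higher-dimensional Arakelov inequality reduces the statement to a positivity question about a single auxiliary line bundle on a compactification, and a general positivity-enhancement mechanism takes care of this question on the level covers. So after possibly replacing $X$ by a finite \'etale cover, I would first choose a smooth projective compactification $\bar X$ of $X$ whose boundary is a simple normal crossings divisor around which $\cL_\bZ$ has unipotent monodromies; this is possible by Borel's monodromy theorem after a finite \'etale base change, which is harmless since it is refined by any sufficiently divisible level cover.

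Applying the generalized Arakelov inequality, I would produce a line bundle $\cM$ on $\bar X$ — built from the Deligne canonical extension of the Hodge filtration on $\cL_\bC$ suitably twisted by $K_{\bar X}(D)$, where $D$ is the boundary — with the following property: for every integral subvariety $Y \subset X$ with desingularization $\tilde Y \to \bar Y$, one has $\vol(Y) \geq c \cdot (\cM|_{\tilde Y})^{\dim Y}$ for a universal positive constant $c$. The hypothesis that the period map has discrete fibres then makes $\cM$ big on $\bar X$, and the lower bound transports under pullback to any finite cover.

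Next, I would extend the \'etale cover $X(p) \to X$ to a finite cover $\pi_p : \overline{X(p)} \to \bar X$, so that the Arakelov-type bound holds on $\overline{X(p)}$ with $\cM$ replaced by $\pi_p^* \cM$. The heart of the proof is then the second main ingredient of the paper: a big line bundle on a normal proper variety becomes arbitrarily positive — in a uniform sense on all subvarieties — under the level covers attached to a local system on a Zariski-dense open subset. Applied to $\cM$ and $\cL_\bZ$, this forces $(\pi_p^* \cM|_{\tilde Y})^{\dim Y} \to \infty$ as $p \to \infty$, uniformly over all integral $Y \subset X(p)$, which together with the Arakelov inequality gives the desired volume lower bound. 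The ``general type'' consequence is then automatic: once $v$ is chosen larger than the contribution of the fixed boundary correction appearing in the Arakelov inequality, the same inequality forces $K_{\tilde Y}$ itself to be big on every $\tilde Y$.

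The main obstacle is precisely the word \emph{uniformly} in the previous paragraph. It is easy to produce positivity of $\pi_p^* \cM$ on the total space $\overline{X(p)}$ — this would only yield that $X(p)$ is of general type and of increasing volume — but much harder to produce it on every subvariety, particularly on those contained in loci where $\cM$ degenerates or where the period map fails to be quasi-finite. The role of the level-cover positivity statement is exactly to detect this restricted positivity from monodromy data of $\cL_\bZ$ modulo $p$, exploiting that the monodromy representation of any positive-dimensional subvariety remains non-trivial modulo all sufficiently large primes; establishing and applying this statement in the required uniform form is where the substantive content of the proof lies.
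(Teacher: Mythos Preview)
Your overall architecture matches the paper's --- Arakelov inequality plus positivity-enhancement under level covers --- but there is a genuine third ingredient you are missing, and your ``fixed boundary correction'' remark glosses over exactly the point where it is needed.

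The Arakelov inequality (Theorem~\ref{Arakelov inequality}) only bounds the \emph{log}-canonical bundle: on a log-resolution $(\tilde Y, D_{\tilde Y})$ of the closure of $Y\subset X(p)$ one gets $\bar L_{\bV}|_{\tilde Y} \leq C \cdot \K_{\tilde Y}(D_{\tilde Y})$ with $C=1/\gamma_\bV$. To conclude anything about $\vol(Y) = \vol(\K_{\tilde Y})$ you must absorb $D_{\tilde Y}$. If your $\cM$ is chosen so that $\cM|_{\tilde Y} \leq C\cdot \K_{\tilde Y}$ holds for every $Y$ (essentially $\cM = \bar L_{\bV}(-C D)$), then $\cM$ is in general \emph{not} big on $\bar X$ --- already for $\cA_1(n)$ with $n\leq 6$ it fails, since those curves are not of general type --- so Theorem~\ref{increasing positivity} does not apply to it. If instead you take $\cM = \bar L_{\bV}(-\epsilon D)$ with $\epsilon$ small enough to be big, the level-cover mechanism gives you $\vol((\pi_p^\ast\cM)|_{\tilde Y})\geq v$, but Arakelov then leaves a residual positive term $(C-\epsilon)D_{\tilde Y}$, and this is \emph{not} a fixed correction: $D_{\tilde Y}$ varies with $Y$ and is not uniformly bounded. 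The paper resolves this tension with an independent result you have not mentioned, Theorem~\ref{Ramification at infinity}: for almost all primes $p$ the cover $\pi_p : \bar X(p) \to \bar X$ ramifies to order divisible by $p$ along every component of $D$, so that $\pi_p^\ast(-\epsilon D)$, restricted and reduced, is bounded above by $-p\epsilon\, D_{\tilde Y}$. Once $p\epsilon \geq C$ the boundary is fully absorbed and one gets the clean chain
\[
v \;\leq\; \vol\big((\pi_p^\ast(\bar L_{\bV}(-\epsilon D)))|_{\tilde Y}\big) \;\leq\; \vol\big(\bar L_{\bV}|_{\tilde Y}(-C D_{\tilde Y})\big) \;\leq\; C^{\dim Y}\vol(Y).
\]
Without this ramification step, no single line bundle on $\bar X$ makes your scheme go through.
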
 
  
Recall that the volume of a line bundle $L$ on a proper variety $X$ of dimension $n$ is the nonnegative real number
\[ \vol(L) := \limsup_{k \rightarrow \infty} \frac{n!}{k^n} \cdot h^0(X, L^{\otimes k})\]
which measures the positivity of $L$ from the point of view of birational geometry. In particular, $\vol(L)$ is positive if and only if the linear system $|kL|$ embbeds $X$ birationally in a projective space for $k$ large enough. The volume $\vol(Y)$ of an integral variety $Y$ is then by definition the volume of the canonical bundle $K_{\bar Y}$ of any smooth proper variety $\bar Y$ birational to $ Y$ (one easily check that this does not depend on $\bar Y$). In particular, $Y$ is of general type if and only if $\vol(Y) >0$. If $Y$ is an integral curve, then $\vol(Y) = 2 \cdot g(Y) - 2$, where $g(Y)$ is the geometric genus of $Y$. Therefore, it follows from Theorem \ref{main result} that the function which associates to a prime number $p$ the minimal genus of an irreducible curve contained in $X(p)$ tends to infinity with $p$. We prove that it is even true for the minimal gonality: 

\begin{cor}\label{increasing gonality}
 With the notations of Theorem \ref{main result}, given an integer $d >0$, for all but finitely many prime numbers $p$, every curve in $X(p)$ has gonality at least $d$.
\end{cor}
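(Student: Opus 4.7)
The plan is to reduce the gonality bound to the volume bound of Theorem~\ref{main result}, applied to a suitable power of $X$.

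First, I would apply Theorem~\ref{main result} to the product variety $X^d$ equipped with the direct-sum local system $\cL_\bZ^{\boxplus d} := \bigoplus_{i=1}^d p_i^* \cL_\bZ$. This underlies the direct-sum variation of Hodge structures $\bV^{\boxplus d}$, whose period map factors through the product of the factorwise period maps $X^d \to \cD^d$ and is therefore quasi-finite. The level-$p$ cover of $X^d$ for this local system is a disjoint union of finitely many copies of $X(p)^d$: the connected components are indexed by the coset space $\GL_{rd}(\bZ/p)/\GL_r(\bZ/p)^d$, reflecting the fact that the monodromy of $\cL_\bZ^{\boxplus d}$ factors through the block-diagonal embedding $\GL_r(\bZ)^d \hookrightarrow \GL_{rd}(\bZ)$. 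Theorem~\ref{main result} then yields: for every prescribed $v > 0$ and all but finitely many primes $p$, every integral subvariety of $X(p)^d$ has volume at least $v$.

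Next, I would use a Galois-closure construction to convert a low-gonality curve in $X(p)$ into a subvariety of $X(p)^d$. If $C \subset X(p)$ has gonality at most $d$, let $\bar C$ be its normalization, $f : \bar C \to \bP^1$ a morphism of degree $d' \leq d$, and $\tilde C \to \bP^1$ the Galois closure of $f$, with Galois group $G \leq S_{d'}$. The $d'$ cosets of the stabilizer $H := \mathrm{Stab}_G(1) \leq G$ give $d'$ morphisms $\tilde C \to \bar C = \tilde C / H$; combining them produces a morphism $\tilde C \to \bar C^{d'} \hookrightarrow X(p)^{d'}$, and composing with an arbitrary embedding $X(p)^{d'} \hookrightarrow X(p)^d$ gives a curve $\Gamma \subset X(p)^d$ whose normalization is $\tilde C$. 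By construction, $\Gamma$ admits a degree-$|G| \leq d!$ morphism to $\bP^1$, so $\Gamma$ has gonality at most $d!$.

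The principal obstacle is to bridge the gap between the volume bound on $\Gamma$ (which controls only the geometric genus of $\tilde C$) and the sought gonality bound on $C$. A genus bound alone is insufficient, since hyperelliptic-type curves show that small gonality is compatible with arbitrarily large genus. To close the argument, one must extract from the proof of Theorem~\ref{main result} a stronger positivity property of the line bundle driving the volume growth: namely, $k$-very-ampleness on an appropriate compactification of $X(p)$ (or of $X(p)^d$) for large $p$, where $k$ may be chosen arbitrarily large. By a classical argument linking separation of points to gonality (in the spirit of Bastianelli--De Poi--Ein--Lazarsfeld--Ullery), $k$-very-ampleness of a line bundle on an ambient variety forces every subcurve to have gonality at least $k+1$. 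Verifying that the uniform positivity estimates underlying Theorem~\ref{main result} do in fact yield such $k$-very-ampleness for arbitrary $k$ as $p$ grows is the delicate technical step.
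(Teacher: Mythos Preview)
Your first step—applying Theorem~\ref{main result} to $X^d$ with the box-sum local system—is correct, and the identification of the level cover with copies of $X(p)^d$ is fine. The difficulty you already flag, however, is fatal to the argument as written: the Galois-closure curve $\Gamma \subset X(p)^d$ need not be rational, so the volume bound on $\Gamma$ only controls its genus, not its gonality. Your proposed rescue via $k$-very-ampleness is speculative; nothing in Theorem~\ref{increasing positivity} or its proof yields separation of $k$-tuples of points, only growth of restricted volumes, and upgrading one to the other on singular or non-projective covers is not automatic.

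The paper avoids this obstacle entirely by replacing $X^d$ with the symmetric power $S^d X$. A degree-$d$ map $f : C \to \bP^1$ tautologically induces a rational map $\bP^1 \dashrightarrow S^d C \to S^d X(p)$, sending $t$ to the effective divisor $f^{-1}(t)$; in other words, the symmetric power converts a gonality-$d$ curve directly into a \emph{genus-zero} curve in $S^d X(p)$. No Galois closure is needed, and the genus-versus-gonality gap never arises. The remaining work is to equip (an open subset of) $S^d X$ with a $\bZ$-local system $\cM$ whose pullback to $X^d$ is your $\cL_\bZ^{\boxplus d}$, check that $\cM_\bC$ still underlies a VHS with quasi-finite period map, and verify that the $\bF_p$-trivialization on $X(p)$ forces the image of $\bP^1$ to land in $(S^d X)^o(p)$. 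Then Theorem~\ref{main result} applied to $(S^d X)^o$ and $\cM$ excludes genus-zero curves for large $p$, and you are done.

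In short: you were one move away. Working on $X^d$ forces you to track an auxiliary curve of uncontrolled genus; passing to $S^d X$ makes the auxiliary curve rational by construction.
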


\begin{rem}
Consider a finite $\bZ$-module $R \subset \bC$  and a torsion-free $R$-local system $\cL_R$ on a complex algebraic variety $X$, and assume that $\cL_{\bC}$ underlies a variation of complex polarized Hodge structures whose period map has discrete fibres. Then the obvious generalizations of Theorem \ref{main result} and its Corollary \ref{increasing gonality} for finite \'etale covers of $X$ associated to maximal ideals in $R$ are valid and proved exactly in the same way. An easy modification of the proofs permits also to obtain results when $R$ is a $\bZ$-algebra of finite type, but the statements are somewhat cumbersome so we leave them to the interested reader.
\end{rem}

Theorem \ref{main result} and its Corollary \ref{increasing gonality} apply in particular to Shimura varieties, and already in this case they are much stronger than the analogous results in the literature, see section \ref{Applications} for a detailed discussion. For example, if $\cA_g(n)$ denotes the moduli space of $g$-dimensional principally polarized abelian varieties with a symplectic level-n structure, we prove the following

\begin{theorem}[cf. Corollary \ref{Minimal_Gonality_A_g}]
For any positive integer $n$, any curve in $A_g(n)$ has gonality at least $ \lceil \frac{n}{6 g} \rceil $.
\end{theorem}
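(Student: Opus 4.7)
The plan is to apply the Arakelov inequality together with a Kodaira-type lower bound to the abelian scheme obtained on $\bP^1$ by Weil restriction.

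Let $C \subset \cA_g(n)$ be an integral curve of gonality $d$, with smooth projective model $\bar C$ and a finite morphism $f \colon \bar C \to \bP^1$ of degree $d$. Pulling back the universal principally polarized abelian scheme and its level-$n$ structure via $\bar C \to C \hookrightarrow \cA_g(n)$ yields a PPAV $\mathcal{A}/\bar C$ of relative dimension $g$ with level-$n$ structure. Apply Weil restriction along $f$ to obtain $\mathcal{B} := \Res_f \mathcal{A}$, a principally polarized semi-abelian scheme of relative dimension $gd$ over $\bP^1$ (after extending by N\'eron models and semistable reduction). Since $\mathcal{A}$ is non-isotrivial (the period map of $\cA_g$ is an immersion and $C$ is positive-dimensional), so is $\mathcal{B}$; and because Weil restriction is functorial on local monodromy, the local monodromy of $\mathcal{B}$ around each degeneration point $t \in T \subset \bP^1$ is congruent to the identity modulo $n$.

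Two inequalities then conclude the argument. First, the Arakelov inequality on $\bP^1$ gives $\deg \omega_{\mathcal{B}} \leq \tfrac{gd}{2}(|T| - 2)$. Second, for a semi-abelian scheme over $\bP^1$ with semistable reduction and local monodromies congruent to the identity modulo $n$, a generalization of Kodaira's $12$-formula shows that each degeneration point contributes at least $n/12$ to $\deg \omega_{\mathcal{B}}$, so $|T| \leq 12\, \deg \omega_{\mathcal{B}}/n$. Substituting yields $\deg \omega_{\mathcal{B}}\,(1 - 6gd/n) \leq -gd$. Since $\mathcal{B}$ is non-isotrivial, $\deg \omega_{\mathcal{B}} > 0$; if $n > 6gd$, the left-hand side is positive while the right is negative --- a contradiction. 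Hence $n \leq 6gd$, i.e., $d \geq n/(6g)$, and since $d$ is an integer, $d \geq \lceil n/(6g)\rceil$.

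The main obstacle is the Kodaira-type lower bound on the local contribution of a boundary point to $\deg \omega_{\mathcal{B}}$ in the higher-dimensional setting. This is where the specific analysis of the paper enters --- the integrality of the Picard--Lefschetz monodromy logarithm combined with the structure of the boundary divisor of a toroidal compactification of $\cA_{gd}$: once a level-$n$ structure is imposed, each local monodromy $I + N_t$ has $N_t$ divisible by $n$ as an integer nilpotent matrix, and each such point therefore contributes at least $n/12$ to $\deg \omega_{\mathcal{B}}$, matching the Arakelov bound to yield the sharp constant $6$.
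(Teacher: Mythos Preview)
Your core idea---Weil restriction along the gonality map to produce an abelian scheme of relative dimension $gd$ over $\bP^1$, then derive a contradiction when $n > 6gd$---is exactly the paper's strategy. The paper's execution, however, is both shorter and cleaner than yours, and one of your two key steps is not fully justified.

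The paper does Weil restriction purely at the level of function fields: the degree-$d$ extension $k(\bP^1)\subset k(C)$ turns the generic PPAV of dimension $g$ with level-$n$ structure into a PPAV of dimension $gd$ with level-$n$ structure over $k(\bP^1)$, i.e.\ a rational curve in $A_{gd}(n)$. It then simply invokes Theorem~\ref{General_type_A_g} (already proved from Corollary~\ref{criterion_general_type} together with Shepherd-Barron's ampleness criterion on the first Voronoi compactification): for $n>6gd$ every subvariety of $A_{gd}(n)$ is of general type, so no rational curve exists. By contrast, you try to spread the Weil restriction to a semi-abelian scheme $\mathcal B$ over all of $\bP^1$ via $\Res_f$. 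Since $f:\bar C\to\bP^1$ is typically ramified, this is delicate (Weil restriction of schemes along a non-\'etale finite map does not behave well over the branch locus), and your ``after extending by N\'eron models and semistable reduction'' hides real work that the paper entirely avoids by staying at the generic point.

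More importantly, your ``Kodaira-type'' inequality $|T|\cdot n/12\le \deg\omega_{\mathcal B}$ is precisely the step where Shepherd-Barron's theorem enters, and your heuristic (``$N_t$ is divisible by $n$, hence each point contributes at least $n/12$'') is not a proof in relative dimension $>1$. The correct argument is global: Shepherd-Barron shows that on $\bar A_{gd}(n)$ the class $L_{\bV}-aD$ is ample iff $n>12a$, hence $L_{\bV}-(n/12)D$ is nef; intersecting with the image of $\bP^1$ gives $\deg\omega_{\mathcal B}\ge (n/12)\,(D\cdot\bP^1)\ge (n/12)|T|$. Once you grant this, your two-inequality computation on $\bP^1$ is just the one-dimensional specialization of the paper's Corollary~\ref{criterion_general_type} applied to that rational curve---so the two arguments coincide. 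In short: same proof, but you should work generically for the Weil restriction and cite Shepherd-Barron explicitly rather than gesture at a local monodromy computation.
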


But our approach permits also to deal with those variations of Hodge structures for which Griffiths horizontality is a non-trivial condition. For example, this permits us to obtain hyperbolicity results for moduli spaces of polarized Calabi-Yau varieties with level structures. (For any prime number $p$, a level-p structure on a smooth projective complex variety $X$ of dimension $d$ is a basis of the $\bF_p$-vector space $\HH^d(X, \bF_p)$.)

\begin{theorem}[cf. Theorem \ref{inf_Torelli}]
For any prime number $p$, let $\cM(p)$ be the moduli stack of polarized Calabi-Yau varieties equipped with a level-p structure (it is a smooth quasi-projective complex variety for $p \geq 3$). Then the minimal volume of an integral variety (resp. the minimal gonality of an integral curve) contained in $\cM(p)$ tends to infinity with $p$.
\end{theorem}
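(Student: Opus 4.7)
The plan is to reduce the theorem to Theorem \ref{main result} and Corollary \ref{increasing gonality} applied to the smooth base $X := \cM(3)$ carrying the variation of Hodge structures on the middle cohomology of the universal family, and then to transfer the resulting bounds on the covers $X(p)$ down to $\cM(p)$ via a finite étale cover of uniformly bounded degree.

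More precisely, let $\pi : \cY \to X$ be the universal family of polarized Calabi-Yau varieties with level-$3$ structure (of relative dimension $d$), and let $\cL_\bZ$ denote the torsion-free quotient of $R^d \pi_* \bZ$. Then $\cL_\bZ$ is a torsion-free $\bZ$-local system whose complexification $\cL_\bC$ underlies a polarized variation of Hodge structures of weight $d$. The essential hypothesis to verify is that the associated period map $\tilde X \to \cD$ has discrete fibres, which is precisely infinitesimal Torelli for Calabi-Yau varieties: by Bogomolov--Tian--Todorov, $T_{[Y]} X \cong \HH^1(Y, T_Y)$, and contraction with a nowhere-vanishing holomorphic $d$-form $\omega_Y$ supplies an isomorphism $\HH^1(Y, T_Y) \cong \HH^1(Y, \Omega^{d-1}_Y) = \Hom(\HH^{d,0}(Y), \HH^{d-1,1}(Y))$. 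Composed with the Kodaira--Spencer map, this is exactly the $(d,0)$-component of the differential of the period map, so the latter is injective; hence the period map is locally injective and in particular has discrete fibres.

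With these hypotheses verified, Theorem \ref{main result} and Corollary \ref{increasing gonality} yield that the minimal volume of an integral subvariety (resp.\ minimal gonality of an integral curve) contained in $X(p)$ diverges with the prime $p$. For $p \neq 3$ coprime to the torsion of $R^d \pi_* \bZ$, the Chinese Remainder Theorem identifies $\cL_{\bZ/3p} \cong \cL_{\bZ/3} \oplus \cL_{\bZ/p}$, so $X(p) = \cM(3)(p)$ is canonically a union of components of $\cM(3p)$, and the forgetful morphism $\cM(3p) \to \cM(p)$ is finite étale of degree bounded by $N := |\GL(r, \bF_3)|$, a constant independent of $p$. Because $\vol(f^* L) = \deg(f) \cdot \vol(L)$ for finite étale $f$, any integral subvariety $Y \subset \cM(p)$ pulls back to a cover $\tilde Y \to Y$ of degree $\leq N$ whose irreducible components $\tilde Y_0$ satisfy $\vol(\tilde Y_0) \leq N \cdot \vol(Y)$; the analogous bound $\mathrm{gon}(\tilde C_0) \leq N \cdot \mathrm{gon}(C)$ for irreducible components of pullbacks of curves transfers gonality divergence in the same way.

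The main obstacle is the verification of infinitesimal Torelli for the class of Calabi-Yau varieties in play: this is classical in the strict case (simply connected, with $h^{0,k}(Y) = 0$ for $0 < k < d$), but may require revisiting if the paper's definition allows, e.g., hyperkähler factors. Everything else --- the identification of $X(p)$ with components of $\cM(3p)$, and the transfer of volume/gonality across a bounded-degree finite étale cover --- is standard bookkeeping once Theorem \ref{main result} is invoked as a black box.
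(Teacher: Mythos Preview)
Your proof is correct and follows essentially the same route as the paper: both arguments reduce the statement to Theorem~\ref{main result} and Corollary~\ref{increasing gonality} applied to the $\bZ$-VHS on middle cohomology of the universal family, with the key input being infinitesimal Torelli for Calabi--Yau varieties (which the paper simply cites as a result of Griffiths \cite{GriffithsIII}, while you sketch it via the contraction-with-$\omega_Y$ argument). Your passage through the level-$3$ cover $X=\cM(3)$ and the subsequent bounded-degree \'etale transfer to $\cM(p)$ is extra bookkeeping that the paper sidesteps by allowing Deligne--Mumford stacks in the hypothesis of Theorem~\ref{inf_Torelli}; your version is arguably more careful on this point, but the substance is the same.
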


Our proof of Theorem \ref{main result} follows essentially from two independent results that are interesting by themselves:
\begin{enumerate}
\item We prove a higher dimensional version of the now classical Arakelov inequalities: in the situation of the Theorem \ref{main result}, assuming that $X$ is the complementary of a normal crossing divisor $D$ in a smooth proper variety $\bar X$, the ``logarithmic volume'' of all subvarieties of $X$ is uniformely controlled by the volume of the Griffiths parabolic line bundle of $\bV$.

\item We show that any big line bundle on a normal proper algebraic variety $\bar X$ can be made more and more positive by going to finite covers of $\bar X$ defined using level structures of a local system defined on a Zariski-dense open subset.
\end{enumerate}

We now describe these two main ingredients more precisely.

\subsection{Higher dimensional Arakelov inequalities}

Let $\bV = (\cL ,  \cF^{\sbt}, h)$ be a variation of complex polarized Hodge structures on a smooth algebraic variety $X$ with quasi-unipotent monodromies at infinity and denote by $ L_{\bV} := \otimes_{p \in \bZ}  \det \cF^p$ its Griffiths line bundle \cite{GriffithsIII}. Then $L_\bV$ extends canonically as a parabolic line bundle $\bar L_{\bV} $ on any log-smooth compactification of $X$ (when the monodromy at infinity of $\cL$ is unipotent, the parabolic structure on $\bar L_{\bV} $ is trivial and $\bar L_{\bV} $ coincides with the Deligne-Schmid extension of $L_{\bV} $). We refer to section \ref{Preliminaries in Hodge theory} for the definitions.\\

Starting with the results of Arakelov about family of curves parametrized by a curve, there is an abundant literature about the now called Arakelov inequalities for abelian schemes and more generally variations of Hodge structures on curves, cf. \cite{Arakelov, Faltings-Arakelov, Peters-Rigidity, Kim-ABC, Jost-Zuo, peters2000arakelovtype, Viehweg-Arakelov}. When $X$ is the complementary of finitely many points in a smooth projective curve  $\bar X$ and the variation $\bV$ is non-isotrivial (or equivalently $\cL$ has infinite monodromy), their more general versions show that $\deg \bar L_\bV \leq C \cdot (-\chi(X))$ for an explicit positive constant $C$ which depends only on the discrete invariants of $\bV$.  The following result provides a generalized Arakelov inequality which applies to variations of Hodge structures on higher dimensional basis (see also \cite[Theorem 0.2]{Zuo_neg} for another possible generalization).\\

Before giving the statement, recall (cf. Theorem \ref{Griffiths-Schmid metric}) that the Chern curvature form of the Griffiths line bundle $L_{\bV}$ equipped with the hermitian metric induced by $h$ is a closed positive real $(1,1)$-form on $X$. The corresponding K\"ahler pseudo-metric is non-degenerate on the Zariski-open subset of $X$ where the period map is immersive, and has a negative holomorphic sectional curvature $- \gamma_{\bV} <0$.

\begin{theorem}[Higher dimensional Arakelov inequalities]\label{Arakelov inequality} Let $\bar X$ be a smooth proper complex algebraic variety, $D \subset \bar X$ a normal crossing divisor and $\bV = (\cL ,  \cF^{\sbt}, h)$ a variation of complex polarized Hodge structures of length\footnote{\label{footnote}By definition, if $[a,b]$ is the smallest interval such that $\Gr_{\cF}^i \cF^{- \infty} = 0$ for $i \notin [a,b]$, then the length of $\bV$ is the integer $b-a$.} 
 $w$ on $\bar X - D$ with a generically immersive period map. Assume that the local system $\cL$ has quasi-unipotent monodromies around the irreducible components of $D$ and let $\bar L_{\bV}$ be the Griffiths parabolic line bundle of $\bV$. Then
\[ \bar L_{\bV} \leq    \frac{1}{\gamma_{\bV}}  \cdot \K_{\bar X}(D)\footnote{By definition, an inequality $A \leq B$ between two $\bQ$-line bundles $A$ and $B$ on a proper variety means that the $\bQ$-line bundle $B - A$ is pseudoeffective.}.\]
\end{theorem}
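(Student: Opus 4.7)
The plan is to produce a closed positive $(1,1)$-current on $\bar X$ representing the class $\gamma_\bV^{-1} K_{\bar X}(D) - \bar L_\bV$, by comparing the Hodge metric on $L_\bV$ with a family of K\"ahler--Einstein metrics of Poincar\'e type in perturbed log-canonical classes. The key input, recalled just before the statement, is that $\omega_\bV := c_1(L_\bV, h_\bV)$ is a closed positive $(1,1)$-form on $X$ defining a K\"ahler pseudo-metric whose holomorphic sectional curvature is bounded above by $-\gamma_\bV$ on the Zariski-dense open subset where it is non-degenerate (non-empty by generic immersivity of the period map). Moreover, under the quasi-unipotent monodromy hypothesis, the Schmid and Cattani--Kaplan--Schmid asymptotic analysis shows that $\log h_\bV$ has integrable logarithmic growth along $D$, so $\omega_\bV$ extends to a closed positive $(1,1)$-current on $\bar X$ whose cohomology class in $\HH^{1,1}(\bar X,\bR)$ equals $c_1(\bar L_\bV)$; this class-matching is precisely what the parabolic extension is designed to achieve.

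Fix now an ample $\bQ$-line bundle $A$ on $\bar X$ and a rational $\varepsilon > 0$. Since $K_{\bar X}(D) + \varepsilon A$ is big, one invokes the existence (due, in increasing generality, to Aubin--Yau, Kobayashi, Tian--Yau, Guenancia, Berman--Guenancia) of a (possibly weak) K\"ahler--Einstein metric $\omega_\varepsilon$ on $X$ with $\mathrm{Ric}(\omega_\varepsilon) = -\omega_\varepsilon$, complete of Poincar\'e type along $D$, representing $c_1(K_{\bar X}(D) + \varepsilon A)$. Applying the Ahlfors--Schwarz--Yau lemma in Royden's form to the identity map $(X,\omega_\varepsilon) \to (X,\omega_\bV)$---the source being complete with Ricci bounded below by $-\omega_\varepsilon$, the target having holomorphic sectional curvature bounded above by $-\gamma_\bV$---yields the pointwise inequality $\gamma_\bV\,\omega_\bV \leq \omega_\varepsilon$ on $X$, trivial on the non-immersive locus where $\omega_\bV$ vanishes.

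Therefore $\omega_\varepsilon - \gamma_\bV \omega_\bV$ is a closed positive $(1,1)$-current on $X$ with $L^1_{\mathrm{loc}}$ local potentials across $D$, hence extends to a closed positive current on $\bar X$ in the class $c_1\!\left(K_{\bar X}(D) + \varepsilon A - \gamma_\bV \bar L_\bV\right)$. This class is thus pseudoeffective; letting $\varepsilon \downarrow 0$ and using closedness of the pseudoeffective cone yields the desired inequality $\bar L_\bV \leq \gamma_\bV^{-1} K_{\bar X}(D)$. I expect the main technical obstacle to be analytic: identifying the cohomology class of the extended Chern current $\omega_\bV$ with the parabolic Chern class $c_1(\bar L_\bV)$ requires the full asymptotic analysis of Hodge metrics under quasi-unipotent monodromy, and the Schwarz--Yau step must be handled carefully since $\omega_\bV$ is degenerate on the non-immersive locus and $\omega_\varepsilon$ may only exist as a weak K\"ahler--Einstein current outside the ample regime (so one likely works first on a log-resolution of the non-immersive locus, or shrinks to a smooth sublocus and extends the resulting current by Skoda-type extension across the remaining analytic set).
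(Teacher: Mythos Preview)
Your overall strategy---produce a positive current in the class $\gamma_\bV^{-1}\,\K_{\bar X}(D) - \bar L_\bV$, and identify the extended Chern current of $(L_\bV, h_H)$ with $c_1(\bar L_\bV)$ via the Cattani--Kaplan--Schmid asymptotics---is correct in outline, and the class-identification step is indeed Theorem~\ref{Chern class and current} in the paper. But the route through an auxiliary K\"ahler--Einstein metric has a genuine gap and a loss of constant.

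The claim ``$\K_{\bar X}(D) + \varepsilon A$ is big'' is circular: bigness (indeed pseudoeffectivity) of $\K_{\bar X}(D)$ is a \emph{consequence} of the theorem combined with the bigness of $\bar L_\bV$ (Proposition~\ref{Griffiths_line_bundle_big}), not an input. A priori $\K_{\bar X}(D) + \varepsilon A$ is ample only for $\varepsilon$ bounded away from zero, so you cannot let $\varepsilon \downarrow 0$. Even granting bigness from an independent source, the Berman--Guenancia metrics are only weak K\"ahler--Einstein currents, and a Schwarz lemma requiring a complete smooth source metric is no longer available. Separately, Royden's Schwarz lemma with only a holomorphic \emph{sectional} curvature bound on the target carries the dimensional factor $\tfrac{2n}{n+1}$, so you would obtain $\gamma_\bV\,\omega_\bV \leq \tfrac{2n}{n+1}\,\omega_\varepsilon$ rather than $\gamma_\bV\,\omega_\bV \leq \omega_\varepsilon$.

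The paper avoids all of this by using one further property of $\omega_\bV$ that you do not invoke: its holomorphic \emph{bisectional} curvature is non-positive (Theorem~\ref{Griffiths-Schmid metric}). Combined with the sectional bound, an elementary trace computation (Proposition~\ref{Ricci curvature and sectional curvature}) gives $\Ric_{\omega_\bV} \leq -\gamma_\bV\,\omega_\bV$ pointwise on $\mathring X$. Thus the metric $h_{GS}$ that $\omega_\bV$ \emph{itself} induces on $\K_X$ already satisfies $C_1(\K_X, h_{GS}) \geq \gamma_\bV\,\omega_\bV$: no auxiliary K\"ahler--Einstein metric, no a priori bigness assumption, and the correct constant. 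The only remaining task is to show that $h_{GS}$ extends as a singular metric with positive curvature on $\K_{\bar X}(D)$; this is done by a volume-form Ahlfors--Schwarz comparison against the Poincar\'e metric on the local model $(\bD^\ast)^r \times \bD^{n-r}$ (Proposition~\ref{Criterions for extending a metric to the log-canonical bundle}).
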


This statement is a consequence of a more precise Arakelov inequality that holds at the level of currents, cf. Theorem \ref{current Arakelov inequality}.

\begin{rems}
\begin{enumerate}
\item The holomorphic sectional curvature $\gamma_{\bV}$ depends only on the discrete invariants of $\bV$ (or equivalently on the corresponding period domain), and it satisfies (cf. Theorem \ref{Griffiths-Schmid metric})
\[ \frac{1}{\gamma_{\bV}}   \leqslant \frac{w^2}{4} \cdot \rank( \cL),\]
so that we get an effective Arakelov inquality. Note that if $(\cL, \cF^{\sbt}, h)$ is isomorphic to its dual (this holds for example if it is a variation of real polarized Hodge structures), then
\[ \frac{w^2}{4}  \cdot \rank( \cL) =   \frac{w}{2} \cdot \sum_{i= 1}^p \rank( \cF^p) . \]
\item Since in that situation the cohomology class of the Griffiths parabolic line bundle is big (cf. Proposition \ref{Griffiths_line_bundle_big}), we recover that $X$ is of log-general type \cite{Zuo_neg, Bruni_Crelle, Bruni-Cado}.
\end{enumerate}
\end{rems}

\begin{ex} 
Let $X$ be a smooth complex variety and $\cA \arrow X$ an abelian scheme of relative dimension $g$ and maximal variation. Let $\bar X$ be a smooth compactification of $X$ such that $D := \bar X - X$ is a normal crossing divisor, and assume that $\cA$ extends as a semi-abelian scheme $\bar \cA \arrow \bar X$ with zero section $e : \bar X \arrow \bar \cA$. Applying Theorem \ref{Arakelov inequality} to the variation of Hodge structures coming from the relative cohomology in degree $1$, we get the inequality 
\[ e^\ast \Omega^g_{\bar \cA / \bar X} \leq    \frac{g}{2}  \cdot \K_{\bar X}(D).\]
This generalizes the well-known inequality over curves \cite{Faltings-Arakelov, Kim-ABC}.
\end{ex}

Since pseudoeffective divisors are dual to movable curves \cite{BDPP}, Theorem \ref{Arakelov inequality} is equivalent to the following statement.
\begin{theorem} With the notations of Theorem \ref{Arakelov inequality}, the following inequality holds for every movable curve $C \subset \bar X$ with class $[C] \in N_1(\bar X)$:

\[    \bar L_{\bV} \cdot [C] \leq    \frac{1}{\gamma_{\bV}}   \cdot  \K_{\bar X}(D) \cdot [C]. \]

In particular, this applies with $[C] = [H]^{\dim X -1}$ for $H$ an ample divisor.
\end{theorem}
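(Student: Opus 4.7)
The statement is essentially a restatement of Theorem \ref{Arakelov inequality} through the duality between the pseudoeffective cone and the movable cone, so the plan is simply to invoke this duality and verify that everything makes sense for the parabolic line bundle $\bar L_\bV$.

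The main step is to apply the theorem of Boucksom--Demailly--P\u aun--Peternell \cite{BDPP}, which asserts that on a smooth proper variety $\bar X$ the cone of pseudoeffective $\bQ$-divisor classes is dual, under the intersection pairing, to the closed cone of movable curves in $N_1(\bar X)$. Equivalently, a $\bQ$-line bundle $M$ on $\bar X$ is pseudoeffective if and only if $M \cdot [C] \geq 0$ for every movable curve $C \subset \bar X$. Theorem \ref{Arakelov inequality} provides precisely such a pseudoeffectivity statement: the $\bQ$-line bundle $\frac{1}{\gamma_\bV} \K_{\bar X}(D) - \bar L_\bV$ is pseudoeffective. Pairing with any movable class $[C]$ yields the claimed numerical inequality. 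The case $[C] = [H]^{\dim X - 1}$ follows since complete intersection classes of ample divisors are well known to lie in the interior of the movable cone.

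The only subtlety to address is that $\bar L_\bV$ is a parabolic line bundle, not a usual line bundle, so one has to explain what $\bar L_\bV \cdot [C]$ means and how the duality applies. By the definition recalled in section \ref{Preliminaries in Hodge theory}, $\bar L_\bV$ is represented by a genuine $\bQ$-line bundle (its underlying line bundle twisted by the parabolic $\bQ$-divisor supported on $D$), so its intersection number with any curve class is well defined and the pseudoeffectivity assertion of Theorem \ref{Arakelov inequality} is a statement about an honest $\bQ$-divisor class; BDPP then applies without modification.

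I do not expect any genuine obstacle: once one accepts the parabolic formalism and the BDPP duality, the statement is formally equivalent to Theorem \ref{Arakelov inequality}, and the proof reduces to a one-line invocation of duality. The only place where care is needed is in the very last assertion, and there one simply notes that for an ample class $H$ the class $[H]^{\dim X -1}$ is movable (it is represented by a smooth complete intersection curve whose deformations sweep out $\bar X$), which makes the specialization of the general inequality immediate.
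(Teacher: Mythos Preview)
Your proposal is correct and matches the paper's own argument: the paper simply observes that, by the BDPP duality between pseudoeffective divisors and movable curves, the pseudoeffectivity inequality of Theorem \ref{Arakelov inequality} is equivalent to the numerical inequality against every movable class. Your additional remarks on interpreting the parabolic line bundle $\bar L_\bV$ as a $\bQ$-divisor class and on the movability of $[H]^{\dim X -1}$ are correct and merely make explicit what the paper leaves implicit.
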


We now derive from Theorem \ref{Arakelov inequality} two immediate corollaries.

\begin{cor} With the notations of Theorem \ref{Arakelov inequality}, the following inequality holds:

\[    \vol (\bar L_{\bV}) \leq    \frac{1}{(\gamma_{\bV})^{\dim X}} \cdot \vol( \K_{\bar X}(D)). \]
\end{cor}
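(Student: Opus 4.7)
The plan is to combine Theorem \ref{Arakelov inequality} with two standard properties of the volume of a (parabolic) $\bQ$-line bundle on a proper variety: monotonicity with respect to the pseudoeffective order and homogeneity of degree $n = \dim X$.

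First I would translate the inequality from Theorem \ref{Arakelov inequality} into the statement that
\[ P := \tfrac{1}{\gamma_{\bV}} \cdot \K_{\bar X}(D) - \bar L_{\bV} \]
is a pseudoeffective parabolic $\bQ$-divisor class on $\bar X$. Next, I would invoke the monotonicity property of the volume function: if $A$ and $B$ are $\bQ$-line bundles (or, more generally, parabolic $\bQ$-line bundles after passing to a root stack or an adapted cover to clear the rational parabolic weights) such that $B - A$ is pseudoeffective, then $\vol(A) \leq \vol(B)$. This is a standard consequence of the characterization of the volume as the supremum of self-intersection numbers of movable classes below the divisor, or equivalently via the interpretation of $\vol$ in terms of asymptotic numbers of sections and Fujita's approximation theorem. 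Applying this to $A = \bar L_{\bV}$ and $B = \tfrac{1}{\gamma_{\bV}} \cdot \K_{\bar X}(D)$ yields
\[ \vol(\bar L_{\bV}) \leq \vol\!\left(\tfrac{1}{\gamma_{\bV}} \cdot \K_{\bar X}(D)\right). \]

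Finally I would use homogeneity: for any positive rational $c$ and any $\bQ$-line bundle $M$ on a proper variety of dimension $n$, one has $\vol(cM) = c^n \vol(M)$, by definition of the volume via $\limsup_k k^{-n} h^0(kM)$. Taking $c = 1/\gamma_{\bV}$ (and $n = \dim X$) converts the right-hand side into $\frac{1}{(\gamma_{\bV})^{\dim X}} \cdot \vol(\K_{\bar X}(D))$, which is the desired inequality.

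The only step that is not entirely formal is ensuring that both monotonicity and homogeneity of $\vol$ persist in the parabolic setting used here, since $\bar L_{\bV}$ is defined as a parabolic line bundle with rational parabolic weights coming from the quasi-unipotent monodromy. This is unproblematic: one either reduces to the case of an honest $\bQ$-line bundle by pulling back to a finite cover $\bar X' \to \bar X$ on which the monodromies become unipotent (and dividing volumes by the degree of the cover), or works directly on the associated root stack where the volume satisfies the same monotonicity and homogeneity properties as in the classical setting. This is the only place where one needs to be slightly careful, but no new input beyond Theorem \ref{Arakelov inequality} is required.
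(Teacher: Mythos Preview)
Your proposal is correct and is exactly the argument the paper has in mind: the corollary is stated as ``immediate'' with no proof given, and indeed it follows from Theorem \ref{Arakelov inequality} by the monotonicity and degree-$\dim X$ homogeneity of the volume function that you invoke. Your remark on handling the parabolic structure by passing to a cover with unipotent monodromies is also in line with how the paper treats such reductions elsewhere.
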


\begin{cor}\label{criterion_general_type}
With the notations of Theorem \ref{Arakelov inequality}, if we assume moreover that the $\bQ$-line bundle $\bar L_{\bV}(-   \frac{1}{\gamma_{\bV}}   \cdot D)$ is big, then $\bar X$ is of general type. More generally, every integral subvariety of $X$ not contained in the augmented base locus of $\bar L_{\bV}(-  \frac{1}{\gamma_{\bV}}   \cdot D)$ is of general type.
\end{cor}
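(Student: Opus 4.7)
The plan is to derive both claims by the same elementary manipulation of pseudoeffective classes on the Néron--Severi-level statement of Theorem \ref{Arakelov inequality}, applied first on $\bar X$ itself and then on a log-resolution of the closure of an arbitrary subvariety. Set $c := 1/\gamma_{\bV}$ throughout, so that Theorem \ref{Arakelov inequality} asserts pseudoeffectivity of $c \cdot \K_{\bar X}(D) - \bar L_{\bV}$.

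\emph{First assertion (bigness of $\K_{\bar X}$).} I would write
\[ c \cdot \K_{\bar X} \;=\; \bigl(c \cdot \K_{\bar X}(D) - \bar L_{\bV}\bigr) \;+\; \bigl(\bar L_{\bV} - c \cdot D\bigr), \]
which expresses $c \cdot \K_{\bar X}$ as the sum of a pseudoeffective class (Theorem \ref{Arakelov inequality}) and a big class (hypothesis), hence big. Therefore $\K_{\bar X}$ is big and $\bar X$ is of general type.

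\emph{Second assertion (subvarieties).} Let $Y \subset X$ be integral with $Y \not\subset \bB_+(\bar L_{\bV} - c \cdot D)$, and let $\bar Y \subset \bar X$ be its closure. I would choose a proper birational morphism $p : \tilde Y \to \bar Y$ with $\tilde Y$ smooth and $D_{\tilde Y} := p^{-1}(D)$ a normal crossing divisor. The pulled-back system $p^{\ast}\bV$ is a VHS on $\tilde Y \setminus D_{\tilde Y}$ with the same period domain (so the same constant $\gamma_{\bV}$) and Griffiths parabolic line bundle $p^{\ast}\bar L_{\bV}$; its local monodromies at infinity are powers of those of $\bV$, hence remain quasi-unipotent. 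By the defining characterization of the augmented base locus, $(\bar L_{\bV} - c \cdot D)|_{\bar Y}$ is big, and since $p$ is birational, $p^{\ast}(\bar L_{\bV} - c \cdot D) = p^{\ast}\bar L_{\bV} - c \cdot D_{\tilde Y}$ is big on $\tilde Y$. Granted that Theorem \ref{Arakelov inequality} is available on $(\tilde Y, D_{\tilde Y}, p^{\ast}\bV)$, giving $p^{\ast}\bar L_{\bV} \leq c \cdot \K_{\tilde Y}(D_{\tilde Y})$, the rearrangement of the first step repeats verbatim:
\[ c \cdot \K_{\tilde Y} \;=\; \bigl(c \cdot \K_{\tilde Y}(D_{\tilde Y}) - p^{\ast}\bar L_{\bV}\bigr) \;+\; p^{\ast}\bigl(\bar L_{\bV} - c \cdot D\bigr), \]
pseudoeffective plus big, hence big; so $\K_{\tilde Y}$ is big and $Y$ is of general type.

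\emph{Main obstacle.} The delicate point is the applicability of Theorem \ref{Arakelov inequality} on $\tilde Y$: its statement requires the period map of $p^{\ast}\bV$ to be generically immersive, which is not automatic for an arbitrary subvariety. I expect this to be handled in one of two ways. Either one shows that bigness of $p^{\ast}\bar L_{\bV}$ (which follows from bigness of $p^{\ast}(\bar L_{\bV} - c \cdot D)$ plus effectivity of $c \cdot D_{\tilde Y}$) forces the period map of $p^{\ast}\bV$ not to factor through a lower-dimensional base; or---more cleanly---one invokes the current-level refinement Theorem \ref{current Arakelov inequality} and restricts the positive current representing $c \cdot \K_{\bar X}(D) - \bar L_{\bV}$ directly to $\bar Y$, the assumption $Y \not\subset \bB_+(\bar L_{\bV} - c \cdot D)$ being precisely what places $Y$ outside the singular locus where such a restriction would fail to be well-defined. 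Either route delivers the pseudoeffectivity on $\tilde Y$ needed for the rearrangement above.
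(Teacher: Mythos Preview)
Your approach is the intended one; the paper states this corollary as immediate from Theorem \ref{Arakelov inequality} and gives no separate proof. Two small points deserve comment.

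First, in the rearrangement on $\tilde Y$ you write an equality, but in general $p^\ast D \neq D_{\tilde Y}$: the scheme-theoretic pullback $p^\ast D$ may have multiplicities, while $D_{\tilde Y}$ is the reduced preimage. One has $p^\ast D \geq D_{\tilde Y}$, so your displayed identity is actually
\[
c \cdot \K_{\tilde Y} \;=\; \bigl(c \cdot \K_{\tilde Y}(D_{\tilde Y}) - p^\ast \bar L_{\bV}\bigr) + p^\ast(\bar L_{\bV} - c \cdot D) + c\,(p^\ast D - D_{\tilde Y}),
\]
i.e.\ pseudoeffective $+$ big $+$ effective, which is still big. So the conclusion survives.

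Second, regarding your ``main obstacle'': your first fix is the right one and works cleanly. The Griffiths parabolic line bundle is nef (see the remark after Proposition \ref{Griffiths_line_bundle_big}), so bigness is equivalent to positivity of its top self-intersection. But that number equals $\int_{\tilde Y} [C_1(L_{\bV},h_H)]^{\dim \tilde Y}$ by Theorem \ref{Chern class and current}, and this integral vanishes if the smooth form $C_1(L_{\bV},h_H)$ is everywhere degenerate on the open part, i.e.\ if the period map of $p^\ast\bV$ is nowhere immersive. Thus bigness of $p^\ast\bar L_{\bV}$ forces generic immersivity, and Theorem \ref{Arakelov inequality} applies on $\tilde Y$. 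Your second proposed fix (restricting the current) is more delicate: the augmented base locus of $\bar L_{\bV}(-cD)$ has no direct relation to the polar set of the current representing $c\,\K_{\bar X}(D)-\bar L_{\bV}$, so that route would need more work.
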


\begin{rems}
\begin{enumerate}
\item Most of these results were already proved by differents methods when $X$ is the quotient of a bounded symmetric domain by a torsion-free arithmetic lattice, cf. \cite{Bruni_AENS, cadorel2018subvarieties}.
\item Assuming that $\bV$ is a variation of \textit{real} polarized Hodge structures, and observing that $\bar L_{\bV}$ is the smallest nonzero Hodge subbundle of the variation of real polarized Hodge structures $ \otimes_{p \in \bZ}  \Lambda^{\rk \cF^p} \bV$, it follows from \cite[Theorem 3.3]{Bruni_AENS} that the cotangent bundle of $\bar X$ is even big under the stronger assumption that the $\bQ$-line bundle $\bar L_{\bV}(- \frac{w^2 \cdot \rk \cL}{2} \cdot D)$ is big.
\end{enumerate}
\end{rems}

\subsection{Increasing positivity through level structures}

We consider a normal projective variety $\bar X$ equipped with a big line bundle $L$, and a torsion-free $\bZ$-local system $\cL_\bZ$ defined on a Zariski-dense open subset $X \subseteq \bar X$. We assume that the local system $\cL_\bZ$ is large, i.e. its pullback by any non-constant algebraic map is non-trivial \cite{Kollar-shafa}. This is true for example if $\cL_{\bC} :=  \cL_\bZ \otimes_\bZ \bC$ underlies a variation of complex polarized Hodge structures whose period map has discrete fibres, cf. proposition \ref{VPHS_verify_the_assumptions}.\\

As before, for every positive integer $n$, we denote by $X(n)$ the finite \'{e}tale cover of $X$ that trivializes the local system $\cL_\bZ \otimes_\bZ (\bZ / n \bZ)$. Let also $\bar X(n)$ be the normalization of $\bar X$ in the total ring of fractions of $X(n)$ and $\pi_n : \bar X(n) \arrow \bar X$ the projection.\\

Informally speaking, the results of this section show that the pullback of $L$ to $ \bar X(n) $ gets more and more positive as $n$ gets big. To state them, let us recall the definition of some invariants attached to $L$ commonly used to measure its positivity.\\

First introduced in \cite{Nakamaye}, the augmented base locus (or non-ample locus) of $L$ is defined as 
\[ \mathbf{B}_+(L) = \cap \, \Supp (D)\]
over all decompositions $L ^{\otimes n} = A(D)$ with $A$ an ample line bundle, $D \subset \bar X$ an effective divisor and $n$ a positive integer. Following \cite{ELMNP09}, we define for every subvariety $\bar Z \subseteq \bar X$ the restricted volume of $L$ on $\bar Z$ as
\[ \vol_{\bar X | \bar Z}(L) = \limsup_{k \rightarrow \infty} \frac{d!}{k^d} \cdot \dim \HH^0(\bar X| \bar Z, L^{\otimes k}),\]

where $d := \dim \bar Z$ and $ \HH^0(\bar X| \bar Z, L^{\otimes k})$ denotes the image of the restriction map $ \HH^0(\bar X, L^{\otimes k}) \arrow  \HH^0( \bar Z,( L_{|\bar Z})^{\otimes k})$. When $L$ is nef, we have the equalities $\vol_{\bar X | \bar Z}(L) = \vol(L_{|\bar Z}) = L^{\dim \bar Z} \cdot \bar Z $.  In general, thanks to \cite{Nakamaye}, \cite[Thm C]{ELMNP09} and \cite[Thm B]{BCL}, the augmented base locus of $L$ coincides with the union of the irreducible subvarieties $\bar Z \subseteq \bar X$ such that $\vol_{\bar X | \bar Z}(L) = 0$.

\begin{theorem}\label{increasing positivity}
Let $\bar X$ be a normal projective variety equipped with a big line bundle $L$, and let $\cL_\bZ$ be a torsion-free $\bZ$-local system defined on a Zariski-dense open subset $X \subseteq \bar X$. Assume that the local system $\cL_\bZ$ is large. Then for any $v >0$, for all but finitely many positive integers $n$, every subvariety $\bar Z$ of $\bar X(n)$ intersecting $X(n)$ satisfies $\vol_{\bar X(n) | \bar Z}(\pi_n ^\ast L) \geq v $, unless its projection is contained in $\mathbf{B}_+(L) $.  
\end{theorem}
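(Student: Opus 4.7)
The plan is to reduce the statement to a uniform lower bound on the degrees of the finite maps $\bar Z \to \pi_n(\bar Z)$ as $n \to \infty$, and then to derive that lower bound from the largeness of $\cL_\bZ$ together with the residual finiteness of $\GL(r,\bZ)$. Since $L$ is big, the very definition of $\mathbf{B}_+(L)$ provides an integer $m \geq 1$, an ample line bundle $A$ on $\bar X$, and an effective divisor $D$ with $\Supp(D) \subseteq \mathbf{B}_+(L)$ such that $L^{\otimes m} \cong A \otimes \cO_{\bar X}(D)$. Set $N := \dim \bar X$.

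\textbf{Step 2 (Volume lower bound in terms of cover degree).} For any integral subvariety $\bar Z \subseteq \bar X(n)$ with $\bar W := \pi_n(\bar Z) \not\subseteq \mathbf{B}_+(L)$, multiplication by a defining section of $\cO(k\pi_n^* D)$ gives an injection $H^0(\bar X(n),\pi_n^* A^{\otimes k}) \hookrightarrow H^0(\bar X(n),\pi_n^* L^{\otimes mk})$ whose composition with restriction to $\bar Z$ is again injective, as $\bar Z \not\subseteq \Supp(\pi_n^* D)$. Writing $d := \dim \bar Z$, and using that $\pi_n^* A$ is ample (hence nef and big) with big restriction to $\bar Z$, the results of \cite{ELMNP09} give
\[ \vol_{\bar X(n)|\bar Z}(\pi_n^* A) = (\pi_n^* A)^d \cdot \bar Z = \deg(\bar Z/\bar W) \cdot (A^d \cdot \bar W) \geq \deg(\bar Z/\bar W), \]
the last inequality using that $A^d \cdot \bar W \geq 1$ since $A$ is an integral ample class. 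Combined with the scaling relation $\vol_{\bar X(n)|\bar Z}(\pi_n^* L^{\otimes m}) = m^d \cdot \vol_{\bar X(n)|\bar Z}(\pi_n^* L)$, this yields
\[ \vol_{\bar X(n)|\bar Z}(\pi_n^* L) \geq \frac{\deg(\bar Z/\bar W)}{m^N}. \]

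\textbf{Step 3 (Bounded family of projections).} Suppose for contradiction that some $v>0$, $n_k \to \infty$ and integral $\bar Z_{n_k} \subseteq \bar X(n_k)$ intersecting $X(n_k)$, with $\bar W_{n_k} := \pi_{n_k}(\bar Z_{n_k}) \not\subseteq \mathbf{B}_+(L)$, satisfy $\vol_{\bar X(n_k)|\bar Z_{n_k}}(\pi_{n_k}^* L) < v$. By Step 2 we have $\deg(\bar Z_{n_k}/\bar W_{n_k}) < v\,m^N$. Since flat pullback of sections along $\bar Z_{n_k} \to \bar W_{n_k}$ is injective, we also get $\vol_{\bar X|\bar W_{n_k}}(L) \leq \vol_{\bar X(n_k)|\bar Z_{n_k}}(\pi_{n_k}^* L) < v$, and applying Step 2 with $\pi_n = \Id$ to $\bar W_{n_k}$ then yields $A^{\dim \bar W_{n_k}} \cdot \bar W_{n_k} < v\, m^N$. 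Hence the $A$-degrees of the $\bar W_{n_k}$ are uniformly bounded, and boundedness of the Hilbert scheme allows one to extract a subsequence along which the $\bar W_{n_k}$ are parametrized by a single irreducible base $T$ via a universal family $\mathcal{W} \subseteq T \times \bar X$.

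\textbf{Step 4 (Uniform monodromy growth --- main obstacle).} The degree $\deg(\bar Z_n/\bar W_n)$ equals the order of the image $H_n(\bar W_n) := \mathrm{Im}\bigl(\pi_1(\bar W_n^{\mathrm{sm}} \cap X) \to \GL(r,\bZ/n\bZ)\bigr)$ induced by $\cL_\bZ \otimes (\bZ/n\bZ)$. By largeness, the integral lift $H(\bar W) := \mathrm{Im}\bigl(\pi_1(\bar W^{\mathrm{sm}} \cap X) \to \GL(r,\bZ)\bigr)$ is an infinite subgroup for every positive-dimensional integral $\bar W \subseteq \bar X$ meeting $X$. The main difficulty is to upgrade this individual infiniteness into a statement uniform over the bounded family $T$: one stratifies $T$ according to the generic monodromy image of the local system $\cL_\bZ$ restricted to the fibers of $\mathcal{W} \cap (T \times X) \to T$, so that on a Zariski-open dense stratum of $T$ a specialization argument shows that the monodromy image of every fiber contains a fixed infinite subgroup $H_0 \subseteq \GL(r,\bZ)$, while the complementary locus is handled by a Noetherian induction (either on $\dim T$ or on an auxiliary dimension attached to the pair $(\bar X,\cL_\bZ)$). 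Since $H_0$ is infinite and $\GL(r,\bZ)$ is residually finite, the image of $H_0$ in $\GL(r,\bZ/n\bZ)$ has cardinality tending to infinity with $n$; therefore $\deg(\bar Z_{n_k}/\bar W_{n_k}) \to \infty$, contradicting the uniform bound $<v\,m^N$ obtained in Step 3 and completing the proof.
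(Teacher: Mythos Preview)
Your approach is essentially the same as the paper's: reduce to an ample class, use the projection formula to factor the intersection number as $\deg(\bar Z/\bar W)\cdot (A^d\cdot \bar W)$, bound the $A$-degree of the projections $\bar W$, invoke boundedness of the Chow/Hilbert scheme to control the monodromy groups of the restricted local systems uniformly, and conclude via residual finiteness of $\GL(r,\bZ)$. Two points deserve correction or sharpening.

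\medskip
\textbf{Step 1 is not correct as stated.} The definition of $\mathbf{B}_+(L)$ gives $\Supp(D)\supseteq \mathbf{B}_+(L)$ for every decomposition $L^{\otimes m}\cong A\otimes\cO_{\bar X}(D)$, not $\Supp(D)\subseteq \mathbf{B}_+(L)$; and when $\mathbf{B}_+(L)$ has codimension $\geq 2$ in $\bar X$ (e.g.\ $L=f^\ast(\text{ample})$ for a small contraction $f$) no nonzero effective divisor can have support contained in $\mathbf{B}_+(L)$, so the single decomposition you postulate does not exist. The fix is exactly what the paper does: by noetherianity $\mathbf{B}_+(L)=\bigcap_{j=1}^k\Supp(D_j)$ for finitely many decompositions $L^{\otimes m_j}=A_j\otimes\cO_{\bar X}(D_j)$; if $\pi_n(\bar Z)\not\subseteq\mathbf{B}_+(L)$ then $\pi_n(\bar Z)\not\subseteq\Supp(D_j)$ for some $j$, and your Step~2 bound applies with that $A_j$. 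The rest of the argument then goes through with $m:=\max_j m_j$.

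\medskip
\textbf{Step 4 is right in spirit but imprecise.} The monodromy image $H(\bar W)\subseteq\GL(r,\bZ)$ is only well-defined up to conjugacy (it depends on a base point and a trivialization of the stalk), so the phrase ``contains a fixed infinite subgroup $H_0$'' is ambiguous. The paper's formulation, following Koll\'ar's framework of families of normal cycles, is cleaner and avoids this: one stratifies the parameter space $T$ so that over each stratum the open parts $u^{-1}(X)\to T$ of the universal family form a topological fiber bundle; then the monodromy groups of the fibers along a stratum are all $\GL(r,\bZ)$-conjugate, so over the whole bounded family there are only finitely many conjugacy classes of monodromy groups, each infinite by largeness. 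Since the order of the image of a fixed infinite subgroup of $\GL(r,\bZ)$ in $\GL(r,\bZ/n\bZ)$ tends to infinity with $n$, this immediately gives $\min_{\bar W}\lvert H_n(\bar W)\rvert\to\infty$, yielding the contradiction you seek.
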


Since the obvious inequality $ \vol((\pi_n ^\ast L)_ {|\bar Z}) \geq \vol_{\bar X(n) | \bar Z}(\pi_n ^\ast L)$ holds for every subvariety $\bar Z$ of $\bar X(n)$, we get immediately the following result.

\begin{cor}\label{increasing positivity-bis}
Let $\bar X$, $L$ and $\cL_\bZ$ as above. For any $v >0$, for all but finitely many positive integers $n$, every subvariety $\bar Z$ of $\bar X(n)$ intersecting $X(n)$ satisfies $ \vol((\pi_n ^\ast L)_ {|\bar Z})  \geq v $, unless its projection is contained in $\mathbf{B}_+(L)$.  
\end{cor}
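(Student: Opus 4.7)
The result really is a one-step deduction from Theorem~\ref{increasing positivity}, so my plan is simply to make the stated inequality explicit and then invoke that theorem.

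First I would verify the comparison $\vol\bigl((\pi_n^\ast L)_{|\bar Z}\bigr) \geq \vol_{\bar X(n) | \bar Z}(\pi_n^\ast L)$ for every subvariety $\bar Z \subseteq \bar X(n)$. By the very definition recalled in the paragraph preceding Theorem~\ref{increasing positivity}, the group $\HH^0(\bar X(n) | \bar Z, (\pi_n^\ast L)^{\otimes k})$ is the \emph{image} of the restriction map
\[
\HH^0\bigl(\bar X(n), (\pi_n^\ast L)^{\otimes k}\bigr) \longrightarrow \HH^0\bigl(\bar Z, ((\pi_n^\ast L)_{|\bar Z})^{\otimes k}\bigr),
\]
so its dimension is at most $\dim \HH^0(\bar Z, ((\pi_n^\ast L)_{|\bar Z})^{\otimes k})$. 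Dividing by $k^d / d!$ (where $d = \dim \bar Z$) and passing to the $\limsup$ as $k \to \infty$ yields the claimed inequality between the restricted volume and the volume of the restriction.

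Then I would simply apply Theorem~\ref{increasing positivity} to the given threshold $v > 0$: for all but finitely many positive integers $n$, any subvariety $\bar Z \subseteq \bar X(n)$ intersecting $X(n)$ whose image in $\bar X$ is not contained in $\mathbf{B}_+(L)$ satisfies $\vol_{\bar X(n)|\bar Z}(\pi_n^\ast L) \geq v$. Combining this with the inequality of the previous paragraph gives $\vol((\pi_n^\ast L)_{|\bar Z}) \geq v$, which is exactly the conclusion of Corollary~\ref{increasing positivity-bis}.

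There is no genuine obstacle here: all of the substance is already contained in Theorem~\ref{increasing positivity}, and the only thing to observe is the trivial left-exactness bookkeeping that makes the restricted volume a lower bound for the volume of the restriction. Consequently the proof is essentially a two-line deduction.
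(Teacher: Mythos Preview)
Your proof is correct and follows exactly the same approach as the paper: it observes the trivial inequality $\vol((\pi_n^\ast L)_{|\bar Z}) \geq \vol_{\bar X(n)|\bar Z}(\pi_n^\ast L)$ coming from the fact that restricted sections form a subspace of all sections on $\bar Z$, and then applies Theorem~\ref{increasing positivity}. The paper states this inequality as ``obvious'' and deduces the corollary in one sentence, so your write-up is if anything slightly more detailed.
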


\begin{rem}
The proof of Theorem \ref{increasing positivity} can be easily adapted to other similar contexts. One can for example consider a normal projective variety $\bar X$ defined over a field of positive characteristic and replace $\cL_\bZ$ by a $\hat{\bZ}$-representation of the \'etale fundamental group of a Zariski-dense open subset $X \subseteq \bar X$. Or one can also consider positive forms on normal compact K\"ahler analytic spaces.
\end{rem}

\subsection{Acknowledgments} 
I am grateful to Ben Bakker for several helpful discussions, and especially one that leaded to Corollary \ref{increasing gonality}. I would like also to thank Damian Brotbek for our collaboration \cite{Brotbek-Brunebarbe} that prompted many improvements in this paper.

\section{Preliminaries in Hodge theory}\label{Preliminaries in Hodge theory}

\subsection{Variations of Hodge structures}
A complex polarized Hodge structure (of weight zero) on a finite-dimensional complex vector space $V$ is the data of a non-degenerate hermitian form $h$ on $V$ and of a decomposition $V = \bigoplus_{p \in \bZ} {V}^p$ which is orthogonal for $h$ and such that the restriction of $h$ to $ {V}^p$ is positive definite for $p$ even and negative definite for $p$ odd. The associated Hodge metric on $V$ is the positive-definite hermitian metric $h_H$ obtained from $h$ by imposing that the Hodge decomposition $V = \bigoplus_{p \in \bZ} {V}^p$ is $h_H$-orthogonal and setting $h_H := (-1)^p \cdot h$ on $V^p$. The associated Hodge filtration is the decreasing finite filtration $\{F^{\sbt}\}$ on $V$ defined by $F^p := \bigoplus_{q \geq p} V^q$. Note that the Hodge decomposition is determined by the Hodge filtration thanks to the formula $V^p = F^p \cap (F^{p+1})^\perp$. Here $(F^{p+1})^\perp$ denotes the orthogonal with respect to the polarization $h$, which is clearly equal to the orthogonal of $F^{p+1}$ with respect to the Hodge metric $h_H$.\\

A variation of complex polarized Hodge structures ($\bC$-VPHS) on a complex analytic space $X$ is composed by a complex local system $\cL$ on $X$ equipped with a non-degenerate hermitian form $h : \cL \otimes_{\bC} \bar \cL \arrow \bC_X$ and a locally split finite filtration $\cF^\bullet$ of $\cL \otimes_{\bC}\cO_X$ by analytic coherent subsheaves which satisfies Griffiths transversality on the reduced regular locus of $X$ and which is fiberwise a polarized complex pure Hodge structure. Observe that the Hodge metrics on the fibres gather as a smooth (but not flat) positive definite hermitian metric on the vector bundle $\cL \otimes_{\bC}\cO_X$.\\

As usual, a $\bC$-VPHS on a complex analytic space $X$ defines a period map from the universal cover of $X$ to a classifying space for polarized Hodge structures. We refer to \cite{GriffithsIII, Schmid73} for more details.


\subsection{Meromorphic extension of a variation of Hodge structures}

Let $\bar X$ be a complex manifold, $D \subset \bar X$ be a normal crossing divisor and $\bV = (\cL ,  \cF^{\sbt}, h)$ be a $\bC$-VPHS on $X$ with quasi-unipotent monodromies around the irreducible components $\{D_i\}_{i \in I}$ of $D$. Let $\cV := \cF^{- \infty}$ be the underlying locally-free $\cO_X$-module and $\nabla = id \otimes d$ the flat connection coming from the identification $\cV = \cL \otimes_{\bC} \cO_X$.\\

By the work of Deligne \cite{Deligne_book}, the holomorphic vector bundle $\cV$ extends uniquely (up to a unique isomorphism) as a meromorphic bundle $\bar \cV$ on $(\bar X, D)$ in which the connection $\nabla$ is regular meromorphic. For every $\alpha \in \bR^I$, let $\cV^\alpha$ be the unique locally-free $\cO_{\bar X}$-module of finite rank contained in $\bar \cV$ such that $\nabla$ induces a connection with logarithmic singularities $\nabla : \cV^\alpha \arrow \cV^\alpha \otimes \Omega^1_{\bar X}( \log D)$ such that the real part of the eigenvalues of the residue of $\nabla$ along $D_i$ belongs to $[\alpha_i,\alpha_i + 1)$, cf. \cite[Proposition 5.4]{Deligne_book}. We obtain in this way a filtered regular meromorphic connection bundle that we call the Deligne's extension of $(\cV, \nabla)$.\\

For every integer $p$ and every $\alpha$, the sheaf $\cF^p$ extends as a locally split subsheaf of $\cV^\alpha$ by setting: $\cF^p \cV^\alpha := \cV^\alpha \cap j_\ast \cF^p$.
This defines a filtration $\{ \cF^p \, \bar \cV \}_{p \in \bZ}$ of $\bar \cV$ in the category of parabolic vector bundles on $(\bar X, D)$. The data of the Deligne's extension of $(\cV, \nabla)$ together with its extended Hodge filtration form a meromorphic variation of Hodge structures in the sense of \cite{Bruni_semipositivity}, that we call the meromorphic extension of $\bV$.\\

If $\bar Y$ is another complex manifold, $E \subset \bar Y$ a normal crossing divisor and $f : \bar Y \arrow \bar X$ a holomorphic map such that $f^{-1}(D) \subseteq E$, then the meromoprhic extension of the pullback variation $f^\ast \bV$ is equal to the pullback of the meromorphic extension of $\bV$, see \cite{Bruni_semipositivity}.

\subsection{Computing the Chern classes using the Hodge metric}

The next result is a slight generalization of results of Cattani-Kaplan-Schmid \cite[Corollary 5.23]{CKS} and Koll\'ar \cite[Theorem 5.1]{Kollar-subadditivity}.
\begin{theorem}\label{Chern class and current}
Let $\bar X$ be a smooth proper complex algebraic variety, $D \subset \bar X$ a normal crossing divisor and $\bV = (\cL ,  \cF^{\sbt}, h)$ a $\bC$-VPHS on $X$ with quasi-unipotent monodromies around the irreducible components of $D$. Let $L$ be a locally split parabolic line subbundle of the meromorphic extension of $\bV$ on $\bar X$ and equipp $L_{|X}$ with the smooth hermitian metric $h_H$ induced by the Hodge metric of $\bV$. Then the first Chern form
\[ C_1(L_{|X}, h_H) := \frac{i}{2\pi} \cdot \Theta_{h_H}( L_{|X}) \]
extends as a closed current on $\bar X$ whose class in $\HH^2(\bar X, \bR)$ is equal to the first parabolic Chern class of $L$.
\end{theorem}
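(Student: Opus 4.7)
The plan is to model the argument on the methods of \cite[Corollary 5.23]{CKS} and \cite[Theorem 5.1]{Kollar-subadditivity}: I compare the Hodge metric on $L_{|X}$ with a reference singular metric on a smooth extension of $L$, chosen so that its extended Chern current tautologically represents the first parabolic Chern class, and verify that the two metrics differ by an $L^1_{loc}$ potential.

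First, let me construct the reference. By the definition of a locally split parabolic line subbundle, there is a canonical locally free $\cO_{\bar X}$-module $L^0 \subseteq \cV^0$ of rank one extending $L_{|X}$, together with parabolic weights $\alpha_i \in [0,1)$ attached to the components $D_i$ of $D$ (determined by the monodromy eigenvalues of $\cL$ together with the position of $L$ in the Hodge filtration). By definition, the first parabolic Chern class of $L$ is $c_1(L^0) + \sum_i \alpha_i \cdot [D_i] \in \HH^2(\bar X, \bR)$. Fix any smooth hermitian metric $h_0$ on $L^0$ and define a singular metric on $L^0_{|X}$ by $h_1 := h_0 \cdot \prod_i |z_i|^{-2 \alpha_i}$ in any chart where $z_1, \dots, z_k$ are local equations for the components of $D$ meeting the chart. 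A direct Poincaré-Lelong computation shows that $C_1(L^0_{|X}, h_1)$ extends as a closed $(1,1)$-current on $\bar X$ whose cohomology class is $c_1(L^0) + \sum_i \alpha_i [D_i]$, the first parabolic Chern class of $L$.

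Second, the main technical input is the norm estimate: near any point of $D$ with coordinates as above, if $e$ is a local holomorphic generator of $L$ viewed as a sub-line-bundle of the filtered piece $\cV^\alpha$ ($\alpha = (\alpha_i)$), then by the nilpotent orbit theorem and the $\SL_2$-orbit theorem (in the quasi-unipotent form of \cite{CKS} extended by \cite{Kollar-subadditivity}),
\[
\log \|e(z)\|_{h_H}^2 \; = \; \sum_i m_i \cdot \log\bigl(-\log |z_i|^2\bigr) \, + \, O(1)
\]
for some integers $m_i$ determined by the Hodge filtration and the weight filtration of the nilpotent monodromy logarithms. In particular, $\log \|e\|_{h_H}^2$ is locally integrable on $\bar X$. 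Consequently the smooth Chern form $C_1(L_{|X}, h_H) = -\frac{1}{2\pi i} \partial \bar\partial \log \|e\|_{h_H}^2$ extends as a closed positive $(1,1)$-current $T_H$ on $\bar X$ (the local currents agree with the smooth form on $X$, hence glue unambiguously).

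Finally, to identify the cohomology class $[T_H]$ with the first parabolic Chern class, I compare $h_H$ and $h_1$. On $X$ both are smooth hermitian metrics on the same line bundle $L_{|X} = L^0_{|X}$, so the ratio defines a global smooth function $\varphi := \log(h_H / h_1)$ on $X$. By the estimate above combined with the explicit definition of $h_1$, $\varphi$ has at most iterated-logarithmic growth along $D$, in particular it is $L^1_{loc}$ on $\bar X$. Therefore $\partial \bar\partial \varphi$ is a well-defined $d$-exact current on $\bar X$, and since $T_H - C_1(L^0_{|X}, h_1) = -\frac{1}{2\pi i} \partial \bar\partial \varphi$ holds on $X$ and both sides extend to currents on $\bar X$, the same identity holds globally. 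Taking cohomology classes gives $[T_H] = [C_1(L^0_{|X}, h_1)]$, which equals the first parabolic Chern class of $L$ by the first step. The main obstacle is the asymptotic norm estimate of the second step; everything else is a Poincaré-Lelong computation and a standard argument that two singular metrics differing by an $L^1_{loc}$ potential define cohomologous Chern currents.
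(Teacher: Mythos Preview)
Your approach is sound in outline and, once the norm estimate is granted, gives a correct proof. It is, however, genuinely different from the paper's argument, and it is worth pointing out exactly where the two diverge.

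The paper does \emph{not} invoke quasi-unipotent Hodge norm asymptotics directly. Instead it cites Koll\'ar's \cite[Theorem 5.1]{Kollar-subadditivity} only in the unipotent case, and then reduces the general case to that one by passing to a finite \'etale Galois cover $f:Y\to X$ on which the monodromy becomes unipotent, compactifying to $\bar f:(\bar Y,E)\to(\bar X,D)$, and pushing the upstairs identity $[f^\ast\alpha]=[f^\ast\Theta_{h_H}]+d[\beta]$ back down by averaging $\beta$ over the Galois group. Your argument replaces this cover-and-average step by a direct comparison of $h_H$ with a reference singular metric $h_1$ whose curvature current tautologically represents the parabolic class; this is cleaner conceptually and is the standard potential-theoretic template. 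The cost is that the whole weight of the argument now rests on your second step, the asymptotic
\[
\log\|e(z)\|_{h_H}^2=\sum_i m_i\log(-\log|z_i|^2)+O(1)
\]
for $e$ a frame of $L$ inside $\cV^\alpha$. That estimate is true, but it is \emph{not} stated in \cite{CKS} or in \cite{Kollar-subadditivity}, both of which work under the unipotent hypothesis; deriving it requires exactly a ramified/\'etale base change to the unipotent situation, which is the content of the paper's proof. So your black box and the paper's argument have essentially the same content. Two further small points: the claim that $T_H$ is \emph{positive} is unwarranted for an arbitrary locally split line subbundle (and is not needed---local integrability of $\log\|e\|_{h_H}^2$ already gives a well-defined closed current $\partial\bar\partial[\log\|e\|_{h_H}^2]$); and the passage between the generator of $L^0\subset\cV^0$ used in steps one and three and the generator in $\cV^\alpha$ used in step two deserves to be made explicit, since it is precisely the factor $\prod_i z_i^{\alpha_i}$ relating them that makes the $|z_i|^{-2\alpha_i}$ in $h_1$ cancel against the polynomial part of $\|e_0\|_{h_H}$ and leave an $L^1_{loc}$ potential.
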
 

\begin{rem}
Following the same argumentation as in \cite{Kollar-subadditivity}, one easily sees that this implies more generally that any homogeneous polynomial in the Chern forms of some locally abelian parabolic bundles that are locally split subquotients of the meromorphic extension of $\bV$ extends as a closed current on $\bar X$ whose class in $\HH^2(\bar X, \bR)$ is equal to the polynomial in the corresponding parabolic Chern classes.
 \end{rem}

\begin{proof}
When the monodromies around the irreducible components of $D$ are unipotent, this is a result of Koll\'ar \cite[Theorem 5.1]{Kollar-subadditivity}. More precisely, he proves the following. Let $h^\prime$ be any smooth hermitian metric on $L$ with Chern connection $\theta_{h^\prime}$ and curvature $\Theta_{h^\prime}$. Let also $\theta_{h_H}$ and $\Theta_{h_H}$ be the Chern connection and curvature of the line bundle $L_{|X}$ equipped with the Hodge metric. Then $\theta_{h_H} - \theta_{h^\prime}$ is a well-defined $1$-form on $X$, and the following equality of $2$-forms $\Theta_{h_H} = \Theta_{h^\prime} + d( \theta_{h_H} - \theta_{h^\prime})$ holds on $X$. Now, as a consequence of \cite[Proposition 5.15]{Kollar-subadditivity} and \cite[Corollary 5.17]{Kollar-subadditivity}, it follows that $ \Theta_{h_H}$ and $d( \theta_{h_H} - \theta_{h^\prime})$ both extend as closed $(1,1)$-currents  $[ \Theta_{h_H}]$ and $[d( \theta_{h_H} - \theta_{h^\prime})]$ on $\bar X$, that $\theta_{h_H} - \theta_{h^\prime}$ extends as a $(0,1)$-current $[\theta_{h_H} - \theta_{h^\prime}]$ on $\bar X$, and that $d[\theta_{h_H} - \theta_{h^\prime}] = [d( \theta_{h_H} - \theta_{h^\prime})]$. Therefore the two closed currents $\Theta_{h^\prime}$ and $\Theta_{h_H}$ on $\bar X$ defines the same cohomology class and the result follows in this case.\\

In general, the monodromy group of $\cL$ being finitely generated, it admits a net finite index subgroup by \cite[Corollaire 17.7]{Borel-groupes_arithmetiques}, that we assume to be normal without loss of generality. Let $f : Y \arrow X$ be the associated finite \'etale Galois cover. Thanks to Riemann existence theorem and Hironaka desingularization theorem, there exist a smooth proper algebraic variety $\bar Y$, a normal crossing divisor $ E \subset \bar Y$ and a morphism $\bar f: (\bar Y, E) \arrow (\bar X,D)$ that extends $f : Y = \bar Y - E \arrow X$. By construction, the monodromies of the local system $f^\ast \cL$ around the irreducible components of $E$ are unipotent.\\
 
Let $\alpha $ be a closed smooth $(1,1)-$form on $\bar X$ whose class in $\HH^2(\bar X, \bR)$ is equal to the first parabolic Chern class of $L$. The closed smooth form $f^\ast \alpha$ represents the class $f^\ast c_{1}(L) = c_{1}(f^\ast L) = c_{1}(^\diamond (f^\ast L)) $. Therefore, $f^\ast \alpha$ is the Chern curvature of a smooth hermitian metric on $^\diamond (f^\ast L)$. Denoting by $\Theta_{h_H}$ the Chern curvature of the line bundle $L_{|X}$ equipped with the Hodge metric, we know from the preceding discussion that
\begin{enumerate}
\item $f^\ast \Theta_{h_H}$ extends as a closed current $[f^\ast \Theta_{h_H}]$ on $\bar Y$, 
\item  there exists a smooth form $\beta$ on $Y$ that extends as a current $[\beta]$ on $\bar Y$ and such that $[f^\ast \alpha] = [ f^\ast \Theta_{h_H} ] + d [\beta]$ and $d [\beta] = [d \beta]$.
\end{enumerate}
By replacing $\beta$ with its average over the fibers of the Galois cover $f: Y \arrow X$, we can assume that $\beta = f^\ast \gamma$ for a smooth form $\gamma$ on $X$, so that $\alpha = \Theta_{h_H} + d \gamma$ on $X$.  Since $\beta$ is locally integrable, it follows that $\gamma$ is locally integrable, hence it extends as a current $[\gamma]$ on $\bar X$. Similarly, $  \Theta_{h_H}$ extends as a current $ [ \Theta_{h_H} ]$ on $\bar X$. The equality $d \gamma = \alpha - \Theta_{h_H} $ shows then that $d \gamma$ extends as well as a current $[d \gamma]$ on $\bar X$. Since the current $[f^\ast \Theta_{h_H}]$ is closed, and since for any smooth form $\delta$ of degree $2 \cdot \dim_{\bC} \bar X - 2$ on $\bar X$ one has
\begin{equation*}
[f^\ast \Theta_{h_H}]( f^\ast \delta)  = \int_Y  f^\ast \Theta_{h_H} \wedge f^\ast \delta   = \deg(f) \cdot \int_X  \Theta_{h_H} \wedge \delta =  \deg(f) \cdot [ \Theta_{h_H}](\delta) ,
\end{equation*}
it follows that the current $[ \Theta_{h_H}]$ is closed too. Similarly, one proves that $d [\gamma] = [d \gamma]$ using that $d [\beta] = [d \beta]$ and $\beta = f^\ast \gamma$. Finally, we get that $[\alpha]  = [ \Theta_{h_H} ] + [d\gamma]  = [ \Theta_{h_H} ] + d [\gamma]$, so that $ [ \Theta_{h_H} ] $ and $\alpha$ defines the same cohomology class $c_1(L)$.
\end{proof}

\subsection{The Griffiths line bundle of a variation of Hodge structures}
Following \cite{GriffithsIII}, if $\bV = (\cL, \cF^{\sbt}, h)$ is a $\bC$-VPHS on a complex analytic space $X$, we define the Griffiths line bundle of $\bV$ by the formula $ L_{\bV} := \otimes_{p \in \bZ}  \det \cF^p$.

\begin{theorem}[{cf. \cite[Theorem 1.9]{Brotbek-Brunebarbe}}]\label{Griffiths-Schmid metric}
Let $\bV = (\cL, \cF^{\sbt}, h)$ be a variation of complex polarized Hodge structures of length $w$ on a complex manifold $S$. The Chern curvature form of its Griffiths line bundle $L_{\bV}$ equipped with the hermitian metric induced by $h$ is a closed positive real $(1,1)$-form on $S$. The corresponding K\"ahler pseudo-metric is non-degenerate on the Zariski-open subset of $S$ where the period map is immersive, has non-positive holomorphic bisectional curvature and its holomorphic sectional curvature $- \gamma_{\bV}$ satisfies 
\[ \frac{1}{\gamma_{\bV}}   \leqslant \frac{w^2}{4} \cdot \rank( \cL). \] 
\end{theorem}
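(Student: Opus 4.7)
The plan is to compute the Chern curvature of $L_\bV$ explicitly, deduce semi-positivity and identify its degeneracy locus, and finally bound the holomorphic sectional curvature. All three steps are classical and follow Griffiths and Schmid.

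First, I would use the smooth orthogonal Hodge decomposition $\cV = \bigoplus_{p+q=w} \cH^{p,q}$ induced by $h_H$, where $\cH^{p,q} = \cF^p \cap (\cF^{p+1})^{\perp_{h_H}}$. The Gauss-Manin connection splits along this decomposition as $\nabla = D + \theta + \bar\theta$, with $D$ a metric connection preserving each Hodge summand and $\theta \in \Omega^{1,0}_S \otimes \End(\cV)$ the Higgs field shifting Hodge type via $(p,q) \mapsto (p-1, q+1)$ (by Griffiths transversality). Expanding the flatness identity $\nabla^2 = 0$ according to bidegree and Hodge shift yields, on the $(1,1)$-component preserving the Hodge grading, $R_D = -[\theta, \bar\theta]$.

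Second, for each $p$ the Gauss-Codazzi formula applied to the holomorphic subbundle $\cF^p \subset \cV$ (with induced metric) has its second fundamental form equal to the component $\theta^p \colon \cH^{p,w-p} \to \cH^{p-1,w-p+1} \otimes \Omega^1_S$, whence
\[ c_1(\cF^p, h_H) = \operatorname{tr}_{\cF^p}(R_D) - \operatorname{tr}\bigl((\theta^p)^\ast \wedge \theta^p\bigr). \]
Summing over $p$ with the weights coming from $L_\bV = \bigotimes_p \det \cF^p$ and substituting $R_D = -[\theta, \bar\theta]$, the internal terms collapse telescopically into
\[ c_1(L_\bV, h_H) = \frac{i}{2\pi} \sum_{p} \operatorname{tr}_{\cH^{p,w-p}}\bigl(\theta^p \wedge (\theta^p)^\ast\bigr), \]
a sum of semi-positive $(1,1)$-forms. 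Hence $\omega := c_1(L_\bV, h_H)$ is closed and semi-positive, and $\omega(\xi, \bar\xi) = 0$ precisely when the Higgs field $\theta(\xi)$ vanishes, i.e.\ exactly where the differential of the period map annihilates $\xi$. An analogous computation for the full curvature tensor of $\omega$ (still dictated by the brackets of $\theta$ and $\bar\theta$) produces a manifestly non-positive expression, yielding non-positivity of the holomorphic bisectional curvature.

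Third, I would estimate the holomorphic sectional curvature. For a $\omega$-unit tangent vector $\xi$ one has $\sum_p \|\theta^p(\xi)\|^2_{h_H} = 1$. Differentiating $\omega$ twice, the Griffiths-Schmid formula expresses the relevant sectional quantity in terms of iterated brackets of $\theta(\xi)$ with its adjoint, yielding an explicit lower bound
\[ \gamma_\bV \geq \frac{4}{w^2 \cdot \rank(\cL)} \]
after applying Cauchy-Schwarz to the at most $w+1$ Hodge levels, whose ranks sum to $\rank(\cL)$. The main obstacle is precisely this last step: one has to beat naive estimates so as to extract the factor $w^2 \cdot \rank(\cL)$ in the denominator rather than, say, a product involving $(w+1)\sum_p \rank(\cH^{p,w-p})^2$; this requires carefully tracking how the norms of the components $\theta^p(\xi)$ can be concentrated among adjacent levels before invoking Cauchy-Schwarz.
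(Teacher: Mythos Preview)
The paper does not actually prove this theorem: it is stated with a reference to \cite[Theorem 1.9]{Brotbek-Brunebarbe} and used as a black box thereafter. So there is no in-paper argument to compare against.

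That said, your outline is the standard Griffiths--Schmid/Peters route and is essentially what the cited reference does. Steps one and two are correct as written: decomposing $\nabla$ into the metric connection plus the Higgs field and its adjoint, then summing the Gauss--Codazzi identities over the flag $\cF^\bullet$, one obtains exactly the formula $c_1(L_\bV,h_H)=\frac{i}{2\pi}\sum_p \operatorname{tr}(\theta^p\wedge(\theta^p)^\ast)$, which is visibly semi-positive and degenerates precisely where $\theta(\xi)=0$. The non-positivity of the holomorphic bisectional curvature also follows from the same bracket computation.

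Your third step is where the sketch becomes thin. You correctly identify that the issue is to turn the abstract curvature expression into the specific numerical bound $\gamma_\bV\ge 4/(w^2\cdot\rank\cL)$, and you are right that a naive Cauchy--Schwarz over the $w+1$ Hodge pieces would only give a weaker constant. The actual argument (as in Brotbek--Brunebarbe, following Peters and Griffiths--Schmid) passes through an explicit inequality for a nilpotent operator $N$ of nilpotency index at most $w+1$ on a space of dimension $r=\rank\cL$: one shows $\operatorname{tr}([N,N^\ast]^2)\ge \frac{4}{w^2 r}\,(\operatorname{tr} NN^\ast)^2$, which is a purely linear-algebraic estimate. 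Your description ``tracking how the norms of the components $\theta^p(\xi)$ can be concentrated among adjacent levels'' gestures at this but does not pin it down; to make the proof complete you should state and prove this operator inequality explicitly.
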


Note that when $\bV$ is isomorphic to its dual (this holds for example if $\bV$ is a variation of \textit{real} polarized Hodge structures), then 
\[ \frac{w^2}{4}  \cdot \rank( \cL) =   \frac{w}{2} \cdot \sum_{i= 1}^p \rank( \cF^p) . \]

Let $X$ be the complementary of a normal crossing divisor $D$ in a smooth proper complex algebraic variety $\bar X$ and $\bV = (\cL ,  \cF^{\sbt}, h)$ a $\bC$-VPHS on $X$ with quasi-unipotent monodromies around the irreducible components of $D$. If $(\bar \cV, \nabla, \cF^\bullet \bar \cV, h)$ is the meromorphic extension of $\bV$, then we define the Griffiths parabolic line bundle of $\bV$ as
\[ \bar L_{\bV} := \otimes_{p \in \bZ}  \det \cF^p \, \bar \cV .\]
By definition it is a parabolic line bundle on $(\bar X, D)$ which coincides on $X$ with the Griffiths line bundle of $\bV$. If $\bar Y$ is another complex manifold, $E \subset \bar Y$ a normal crossing divisor and $f : \bar Y \arrow \bar X$ a holomorphic map such that $f^{-1}(D) \subseteq E$, then the Griffiths line bundle of the pullback variation $f^\ast \bV$ is equal to $f^\ast L_{\bV}$. The following result is a direct consequence of Theorem \ref{Chern class and current} and Theorem \ref{Griffiths-Schmid metric} that we state for reference:

\begin{prop}\label{Griffiths_line_bundle_big}
Let $X$ be the complementary of a normal crossing divisor $D$ in a smooth proper complex algebraic variety $\bar X$ and $\bV = (\cL ,  \cF^{\sbt}, h)$ a $\bC$-VPHS on $X$ with quasi-unipotent monodromies around the irreducible components of $D$. If the period map of $\bV$ is generically immersive, then the cohomology class of the Griffiths parabolic line bundle of $\bV$ is big.
If moreover the period map of $\bV$ has discrete fibres, then the cohomology class of the Griffiths parabolic line bundle of $\bV$ is ample modulo $D$,  i.e. $\mathbf{B}_+(\bar L_{\bV}) \subset D$.
\end{prop}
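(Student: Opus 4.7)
The plan is to realize $c_1(\bar L_\bV)$ as the cohomology class of an explicit closed positive current built from the Hodge metric, and to deduce both bigness and the inclusion $\mathbf{B}_+(\bar L_\bV)\subseteq D$ from pointwise positivity of that current. By Theorem \ref{Griffiths-Schmid metric}, the Chern form $\Theta := C_1(L_\bV, h_H)$ is a smooth, closed, semipositive real $(1,1)$-form on $X$ which is strictly positive definite on the Zariski-open subset $U \subseteq X$ where the period map is immersive; under the generic immersion hypothesis of the first assertion, $U$ is nonempty. By Theorem \ref{Chern class and current}, $\Theta$ extends across $D$ as a closed positive current $[\Theta]$ on $\bar X$ whose cohomology class equals $c_1(\bar L_\bV)$, so that $c_1(\bar L_\bV)$ is already seen to be pseudo-effective.

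For the bigness assertion, I would invoke Boucksom's formula expressing the volume of a pseudo-effective class as the supremum of non-pluripolar Monge-Amp\`ere masses of its closed positive currents, which yields $\vol(\bar L_\bV) \geq \int_{\bar X} \langle \Theta^n \rangle$. Since $X$ is Zariski-open in $\bar X$ and $\Theta$ is smooth there, the right-hand side is bounded below by $\int_U \Theta^n > 0$, the strict positivity coming from $\Theta \geq 0$ on $X$ together with $\Theta$ being positive definite on the nonempty open set $U$. Therefore $\vol(\bar L_\bV) > 0$, i.e.\ $\bar L_\bV$ is big.

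For the second assertion, under the discrete-fibres hypothesis, my plan is to show $\mathbf{B}_+(\bar L_\bV) \subseteq D$ by producing a K\"ahler current in $c_1(\bar L_\bV)$ whose analytic singular locus lies in $D$, and then invoking the Boucksom-Demailly-Nakamaye description of $\mathbf{B}_+$ as the intersection of the analytic singular sets of such K\"ahler currents. The candidate is a convex combination of $[\Theta]$, which has vanishing Lelong numbers throughout $X$ because it is smooth there, with a small multiple of a K\"ahler current extracted from the bigness of $\bar L_\bV$ already established. The main obstacle I expect is precisely this step: even under the discrete-fibres hypothesis, $\Theta$ is merely semi-definite on the algebraic locus inside $X$ where the period map fails to be immersive (discrete fibres forces only generic, not everywhere, immersion), so promoting $[\Theta]$ to a genuine K\"ahler current with analytic singularities concentrated along $D$ requires Demailly's regularization of closed positive currents together with careful control of the Lelong numbers in the resulting approximations. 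As a fall-back, one could argue subvariety-wise: for $\bar Z \not\subseteq D$, the period map restricted to any log resolution $\tilde Z$ of $(\bar Z, D \cap \bar Z)$ still has discrete fibres, so applying the first part gives $\vol\bigl(\bar L_\bV|_{\tilde Z}\bigr) > 0$, and one then bridges to the restricted volume $\vol_{\bar X|\bar Z}(\bar L_\bV)$ to conclude $\bar Z \not\subseteq \mathbf{B}_+(\bar L_\bV)$ via the ELMNP--BCL characterization.
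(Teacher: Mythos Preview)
The paper does not spell out a proof beyond declaring the proposition a ``direct consequence'' of Theorem~\ref{Chern class and current} and Theorem~\ref{Griffiths-Schmid metric}, so your task is really to supply the missing details. Your argument for the first assertion is correct and is the intended one: the extended current $[\Theta]$ is closed and positive, represents $c_1(\bar L_{\bV})$, is smooth on $X$, and is strictly positive on the nonempty open set $U$; Boucksom's inequality $\vol(\bar L_{\bV})\geq\int_{\bar X}([\Theta]_{\mathrm{ac}})^n\geq\int_U\Theta^n>0$ then gives bigness.

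For the second assertion there is a genuine gap, and you have correctly sensed it without closing it. Your K\"ahler-current approach cannot succeed on its own: on the blow-up of $\bP^2$ at a point, the pullback $H$ of the hyperplane class is big and is represented by a smooth semipositive form that is K\"ahler away from the exceptional curve $E$, yet $\mathbf{B}_+(H)=E$. So the existence of a closed positive current in $c_1(\bar L_{\bV})$ that is smooth on $X$ and K\"ahler on a dense open set, together with bigness, places no constraint on $\mathbf{B}_+(\bar L_{\bV})\cap X$. The discrete-fibre hypothesis must therefore enter through restriction to subvarieties, exactly as in your fallback; but your ``bridge'' from $\vol(\bar L_{\bV}|_{\tilde Z})>0$ to $\vol_{\bar X|\bar Z}(\bar L_{\bV})>0$ is not automatic: in general only the inequality $\vol_{\bar X|\bar Z}(L)\leq\vol(L|_{\bar Z})$ holds, and the same blow-up example shows it can be strict (there $\vol(H|_E)=0$ as well, but one easily modifies the example so that the restriction is big while the restricted volume vanishes).

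The missing ingredient is the \emph{nefness} of $\bar L_{\bV}$, which the paper invokes immediately after the proposition (from \cite{Bruni_semipositivity}). Once $\bar L_{\bV}$ is nef, the identity recalled in the introduction gives $\vol_{\bar X|\bar Z}(\bar L_{\bV})=\bar L_{\bV}^{\dim\bar Z}\cdot\bar Z=\vol(\bar L_{\bV}|_{\bar Z})$, and your fallback then goes through cleanly: for every irreducible $\bar Z\not\subset D$ the restricted period map on a log resolution $\tilde Z$ is still generically immersive (discrete fibres forces generic maximal rank of the differential), so the first part yields $\vol(\bar L_{\bV}|_{\tilde Z})>0$, hence $\bar L_{\bV}^{\dim\bar Z}\cdot\bar Z>0$, and Nakamaye's theorem $\mathbf{B}_+(\bar L_{\bV})=\Null(\bar L_{\bV})$ (or the ELMNP--BCL characterization) gives $\mathbf{B}_+(\bar L_{\bV})\subset D$. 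In short, the second assertion is a consequence of Theorems~\ref{Chern class and current} and~\ref{Griffiths-Schmid metric} \emph{together with} nefness; without the latter your argument does not close.
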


See also \cite{Bruni_semipositivity} where it is proved that the Griffiths parabolic line bundle is always nef.


\section{Higher dimensional Arakelov inequalities}

The goal of this section is to prove an Arakelov inequality at the level of currents that will imply Theorem \ref{Arakelov inequality}.
\subsection{An Arakelov inequality between currents}
Let $\bar X$ be a complex manifold of dimension $n$, $D \subset \bar X$ a normal crossing divisor and $\bV = (\cL ,  \cF^{\sbt}, h)$ a $\bC$-VPHS of length $w$ on $X = \bar X - D$. Thanks to Theorem \ref{Griffiths-Schmid metric}, the Chern curvature form of the Griffiths line bundle $L_{\bV}$ of $\bV$ equipped with its Hodge metric $h_H$ defines a  K\"ahler pseudo-metric on $X$. Assume that the Zariski-open subset $\mathring{X}$ of $X$ where the period map is immersive is non-empty and equipp $\mathring{X}$ with the K\"ahler metric whose K\"ahler form is the Chern curvature form of the Griffiths line bundle $L_{\bV}$ of $\bV$ equipped with its Hodge metric. We denote by $- \gamma_{\bV}$ its holomorphic sectional curvature, so that $\frac{1}{\gamma_{\bV}}   \leqslant \frac{w^2}{4} \cdot \rank( \cL)$. Finally, we denote by $h_{GS}$ the induced pseudo-metric on $X$ and $\K_X$.

\begin{theorem}\label{current Arakelov inequality}
Notations as above. Then the following inequalities 
\[ C_1(L_{\bV}, h_H)  \leq \frac{1}{\gamma_{\bV}} \cdot C_1(\K_{X}, h_{GS}) \leq \frac{w^2 \cdot \rk{\cL}}{4} \cdot C_1(\K_{X}, h_{GS})\]
between positive $(1,1)$-forms hold pointwise on $\mathring{X}$. Moreover, the first Chern form $C_1(L_{\bV}, h_H) $ extends as a closed positive current $[C_1(L_{\bV}, h) ]$ on $\bar X$, the pseudo-metric $h_{GS}$ on $\K_X$ extends a singular metric on $\K_{\bar X}(D)$ and the inequalities
\[ [C_1(L_{\bV}, h_H) ]  \leq  \frac{1}{\gamma_{\bV}} \cdot C_1(\K_{\bar X}(D), h_{GS}) \leq  \frac{w^2 \cdot \rk{\cL}}{4} \cdot C_1(\K_{\bar X}(D), h_{GS})\]
hold in the sense of currents on $\bar X$.
\end{theorem}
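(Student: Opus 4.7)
First I would reduce everything to a pointwise curvature inequality on the immersive locus $\mathring{X}$. Write $\omega_{GS} := C_1(L_{\bV}, h_H)$; by Theorem~\ref{Griffiths-Schmid metric} this is a Kähler form on $\mathring{X}$ with non-positive holomorphic bisectional curvature and holomorphic sectional curvature $\leq -\gamma_{\bV}$. Pick a point $x \in \mathring{X}$, a tangent vector $v \neq 0$, and a $\omega_{GS}$-unitary frame $(e_1, \dots, e_n)$ at $x$ with $e_1 = v / |v|$. Then
\[
\Ric(\omega_{GS})(v, \bar v) \; = \; |v|^2 \sum_{i=1}^n R(e_1, \bar e_1, e_i, \bar e_i) \; \leq \; |v|^2 \bigl( -\gamma_{\bV} + 0 \bigr) \; = \; -\gamma_{\bV} \cdot \omega_{GS}(v, \bar v),
\]
the $i=1$ summand being the holomorphic sectional curvature at $e_1$ and the remaining summands being non-positive bisectional curvatures. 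Since $C_1(\K_X, h_{GS}) = -\Ric(\omega_{GS})$, this is precisely the first pointwise inequality on $\mathring{X}$; the second pointwise inequality is immediate from $1/\gamma_{\bV} \leq w^2 \cdot \rk \cL / 4$, recalled in Theorem~\ref{Griffiths-Schmid metric}.

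Next I would promote both sides to closed currents on $\bar X$. For the left-hand side, Theorem~\ref{Chern class and current}, combined with the remark after it allowing polynomials in the Chern forms of locally split subquotients, applies to the parabolic Griffiths line bundle $\bar L_{\bV} = \otimes_{p} \det \cF^p \bar \cV$ and yields that $C_1(L_{\bV}, h_H)$ extends as a closed positive current $[C_1(L_{\bV}, h_H)]$ on $\bar X$ representing $c_1(\bar L_{\bV})$. For the right-hand side, I would show that $h_{GS}$ extends as a singular hermitian metric on $\K_{\bar X}(D)$. The input here is the asymptotic analysis of the Hodge metric along $D$ (Schmid's nilpotent orbit theorem and the Cattani--Kaplan--Schmid refinements): a Schwarz--Ahlfors-type comparison of $\omega_{GS}$ with the Poincaré metric of $(\bar X, D)$ bounds $\omega_{GS}$ from above near $D$, and shows that the local weight $-\log \det(g_{GS})$ on $\K_{\bar X}(D)$ is bounded above along $D$, hence defines a singular metric whose curvature current is positive.

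To pass from the pointwise inequality on $\mathring{X}$ to the current inequality on $\bar X$, I would work with the $\bQ$-line bundle $M := \tfrac{1}{\gamma_{\bV}} \K_{\bar X}(D) - \bar L_{\bV}$ endowed with the quotient of the two singular metrics produced in the previous step. By the first step, the curvature of this quotient metric on $\mathring{X}$ is a non-negative $(1,1)$-form, so any local weight $\varphi$ on $\mathring{X}$ is plurisubharmonic. The extension statements above show that $\varphi$ is locally bounded above near the analytic subset $\bar X \setminus \mathring{X}$, and the standard Grauert--Remmert extension of psh functions across analytic subsets then produces a global psh weight on $\bar X$. Applying $dd^c$ yields $[C_1(L_{\bV}, h_H)] \leq \tfrac{1}{\gamma_{\bV}} \cdot C_1(\K_{\bar X}(D), h_{GS})$ as currents on $\bar X$, and the remaining current inequality follows once more from the numerical bound on $1/\gamma_{\bV}$.

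The main obstacle in this plan is the boundary analysis of the Griffiths--Schmid pseudo-metric underlying Steps 2 and 3: one has to convert the asymptotic description of the Hodge metric near $D$ into an effective upper bound for the local weight of $h_{GS}$ on $\K_{\bar X}(D)$, and for the local weight of the quotient metric on $M$. Once this asymptotic input is granted, the unitary-frame curvature computation of Step~1 and the passage from psh weights to currents in Step~3 are routine.
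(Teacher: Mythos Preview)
Your Step~1 is correct and is exactly the paper's Proposition~\ref{Ricci curvature and sectional curvature}: non-positive bisectional curvature together with the sectional bound give $\Ric_{\omega} \leq -\gamma_{\bV}\,\omega$ on $\mathring X$, which is the pointwise inequality. The divergence is in the extension to $\bar X$. The paper packages this into Proposition~\ref{Criterions for extending a metric to the log-canonical bundle}: from the Ricci inequality alone, the Ahlfors--Schwarz lemma in the form of \cite[Lemma 2.2.8]{Noguchi-Ochiai} bounds the pseudo-volume form $\omega^n/n!$ by the Poincar\'e volume on punctured polydisks, hence the local weight of $h_{GS}$ on $\K_{\bar X}(D)$ tends to $-\infty$ along $D$ and extends as a plurisubharmonic function. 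The extension of $\omega$ as a closed positive current and the current inequality are then declared direct consequences. This is pure complex analysis: no appeal to Theorem~\ref{Chern class and current}, and no Schmid/CKS asymptotics. In fact Theorem~\ref{current Arakelov inequality} is stated for $\bar X$ an arbitrary complex manifold without any quasi-unipotent hypothesis, so Theorem~\ref{Chern class and current} (and the parabolic extension $\bar L_{\bV}$ itself) are not available in this generality; your Step~2 cannot invoke them.

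The genuine gap in your Step~3 is the assertion that the weight $\varphi = \tfrac{1}{\gamma_{\bV}}\varphi_{GS} - \varphi_H$ on $M$ is locally bounded above near $D$ ``from the extension statements above''. That would require the Hodge-metric weight $\varphi_H$ on the Griffiths line bundle to be bounded \emph{below} near $D$, and it is not: along a transverse disk it typically behaves like $-a\log\log(1/|z|)$ with $a>0$. Since $\varphi_{GS}$ also tends to $-\infty$ at a $\log\log$ rate, bounding $\varphi$ above amounts to a rate comparison $a \leq 1/\gamma_{\bV}$ --- precisely the delicate asymptotic analysis you correctly flag as the main obstacle. The paper avoids this entirely: once $h_{GS}$ is known to extend, the passage from the pointwise inequality on $\mathring X$ to the current inequality on $\bar X$ uses only a \emph{smooth} local potential $\chi$ for $\gamma\omega$ on $X$ (so $\varphi_{GS}-\chi$ is automatically bounded above and extends as psh across $X\setminus\mathring X$), together with the observation that the trivial extension $[\omega]$ carries no mass on $D$ while any mass of $\Theta_{\tilde h}(\K_{\bar X}(D))$ there is non-negative.
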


\subsection{Preliminaries on K\"ahler metrics}
Consider a K\"ahler manifold $(M,g)$ with K\"ahler form $\omega$. We denote by $J$ the real operator on the tangent vector bundle that defines the complex structure. We denote also by $\nabla$ the Levi-Civita connection of the underlying Riemannian manifold and $R \in \cA^2(\End(TM))$ its curvature. Recall that the Ricci curvature tensor $r$ is defined by 
\[ r(X,Y) = \mathrm{tr}(W \mapsto R(W,X)\cdot Y) \]
for any real tangent vectors $X, Y,W$. The Ricci tensor of a K\"ahler metric is a real symmetric bilinear form of type $(1,1)$. We denote by $\Ric_\omega$ the associated $2$-form defined by $\Ric_\omega(u, v) := r(J(u) , v)$. It is a closed real $(1,1)$-form which is called the Ricci form. Recall the equality 
\[       \Ric_\omega =          i \cdot \Theta (\K_M) , \] 
where $\Theta(\K_M)$ denote the Chern curvature of the canonical bundle $\K_M =\Omega^n_M$ of $M$ equipped with the smooth hermitian metric induced by the K\"ahler metric on $M$.\\

Recall that the holomorphic bisectional  curvature at $x \in M$ is  defined as 
\[ H(X,Y) := g(R(X,JX)\cdot JY, Y)\]
for every unit vector $X$ and $Y$ in $ T_x M$, and that the holomorphic sectional  curvature at $x \in M$ is  defined as $H(X) := H(X,X)$ for every unit vector $X$ in $ T_x M$.

\begin{prop}\label{Ricci curvature and sectional curvature}
Let $M$ be a K\"ahler manifold with K\"ahler form $\omega$. Assume that its holomorphic bisectional curvature is non-positive and that its holomorphic sectional curvature is bounded from above by $- \gamma < 0$. Then $\Ric_\omega \leq - \gamma \cdot \omega$.
\end{prop}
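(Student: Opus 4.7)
The plan is to reduce the inequality of $(1,1)$-forms $\Ric_\omega \leq -\gamma \cdot \omega$ to a pointwise inequality at each $x \in M$, and then exploit the standard fact that the Ricci curvature of a Kähler metric is the trace of the full holomorphic bisectional curvature tensor. More precisely, I would fix $x \in M$ and show that for every $(1,0)$-tangent vector $X \in T_x^{1,0}M$ one has the pointwise estimate $\mathrm{Ric}(X, \bar X) \leq -\gamma \cdot g(X, \bar X)$, which by the standard dictionary between the Ricci tensor and the Ricci $(1,1)$-form (and between $g$ and $\omega$) is equivalent to the stated inequality of forms.

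The first step is to normalize: by $\bC$-linearity of both sides in $X$ it suffices to prove the inequality when $X$ has unit length. Then I would choose a $g$-unitary basis $\{e_1,\dots,e_n\}$ of $T_x^{1,0}M$ with $e_1 = X$. In such a frame the Ricci tensor is the trace of the curvature operator, which gives the decomposition
\[
\mathrm{Ric}(X, \bar X) \;=\; \sum_{i=1}^{n} H(e_i, X),
\]
where $H$ denotes the holomorphic bisectional curvature (this is essentially the definition of Ricci once one writes out $R_{k\bar l} = \sum_i R_{i\bar i k\bar l}$ in the frame).

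Isolating the diagonal term, I rewrite this as
\[
\mathrm{Ric}(X, \bar X) \;=\; H(X,X) \;+\; \sum_{i=2}^{n} H(e_i, X).
\]
The first term is the holomorphic sectional curvature of $X$, bounded above by $-\gamma$ by hypothesis. Each remaining term $H(e_i, X)$ is a holomorphic bisectional curvature, hence is $\leq 0$ by the non-positivity hypothesis. Summing the estimates yields $\mathrm{Ric}(X, \bar X) \leq -\gamma$ for every unit $X$, hence $\mathrm{Ric}(X, \bar X) \leq -\gamma\, g(X, \bar X)$ for arbitrary $X$, and passing to the associated $(1,1)$-forms gives the desired inequality $\Ric_\omega \leq -\gamma \cdot \omega$.

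There is no real obstacle: the argument is purely linear-algebraic at each point once one accepts the trace formula for the Ricci curvature. The only place where some care is needed is in making sure the sign conventions for $\mathrm{Ric}$, $\Ric_\omega$, the bisectional curvature $H$, and the Kähler form $\omega$ are consistent, so that the two hypotheses (holomorphic sectional curvature $\leq -\gamma$ and holomorphic bisectional curvature $\leq 0$) both push the sum in the same direction.
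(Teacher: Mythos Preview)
Your argument is correct and is essentially the same as the paper's: fix a point, choose a unitary frame with $e_1=X$, write the Ricci curvature as the trace $\sum_j H(e_j,X)$, and bound the diagonal term by $-\gamma$ and the remaining terms by $0$. The only cosmetic difference is that you phrase things in $T^{1,0}M$ while the paper uses the real tangent space conventions.
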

\begin{proof}
Let $x \in M$ and $X \in T_x M$ a unit vector. Take an orthonormal basis $(e_1, \cdots , e_n)$ of $T_x M$ such that $e_1 = X$. Then
\[ r(X,X) = \sum \limits_{j=1}^n H(e_j, X)  \leq H(e_1, e_1) \leq - \gamma .\]
\end{proof}
\subsection{Criterions for extending a metric to the log-canonical bundle}
\begin{prop}\label{Criterions for extending a metric to the log-canonical bundle}
Let $X$ be the complementary of a normal crossing divisor $D$ in a complex manifold $\bar X$ of dimension $n$. Let $\omega$ be a closed smooth real $(1,1)$-form on $X$ which is positive on a dense Zariski-open subset $\mathring{X}$. Assume that there exists a constant $\gamma >0$ such that $\Ric_\omega \leq - \gamma \cdot \omega$ pointwise on $\mathring{X}$. Then 
\begin{enumerate}
\item the smooth hermitian metric on $\K_{\mathring{X}}$ induced by $\omega_{|\mathring{X}}$ extends as a singular metric $\tilde{h}$ with positive curvature on $\K_X(D)$,
\item the smooth differential form $\omega$ extends as a positive real $(1,1)$-current $\tilde{\omega}$ on $\bar X$,
\item and one has the following inequality between currents on $\bar X$:
\[  \gamma \cdot  \tilde{\omega} \leq  \Theta_{\tilde{h}}(\K_X(D))  . \]
\end{enumerate}
\end{prop}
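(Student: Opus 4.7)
All three claims will follow from the pointwise curvature inequality on $\mathring{X}$ together with a psh extension across the analytic subset $\bar X\setminus\mathring X$; the main analytic input is a Yau-type Schwarz lemma controlling the behavior near $D$.

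First I would establish the pointwise inequality on $\mathring X$. The smooth Hermitian metric $h_\omega$ on $\K_{\mathring X}$ induced by $\omega$ has Chern curvature $\Theta_{h_\omega}(\K_{\mathring X})=-\Ric_\omega$, so the hypothesis $\Ric_\omega\leq-\gamma\omega$ gives $\Theta_{h_\omega}(\K_{\mathring X})\geq\gamma\omega$ pointwise on $\mathring X$. Under the canonical identification $\K_{\bar X}(D)|_{\mathring X}=\K_{\mathring X}$, the same holds for the curvature viewed on $\K_{\bar X}(D)|_{\mathring X}$.

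Next, the singular extension, which is the heart of the argument. Near any point of $\bar X$, choose local coordinates so that $D=\{z_1\cdots z_k=0\}$, and use the local frame $\sigma=\frac{dz_1}{z_1}\wedge\cdots\wedge\frac{dz_k}{z_k}\wedge dz_{k+1}\wedge\cdots\wedge dz_n$ of $\K_{\bar X}(D)$. On $\mathring X$ the local weight is
\[ \varphi = -\log|\sigma|^2_{h_\omega} = \log\det(g_{j\bar k}) + 2\sum_{j=1}^k\log|z_j|, \]
which is smooth psh on $\mathring X$ because $\partial\bar\partial\varphi=-\Ric_\omega\geq\gamma\omega$. By the classical extension theorem for psh functions across an analytic subset, it suffices to show that $\varphi$ is locally bounded above near every point of $\bar X\setminus\mathring X$; upper-semi-continuous regularization then produces a psh extension $\tilde\varphi$ defining $\tilde h$ and giving (1). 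Near points of $X\setminus\mathring X$ this is automatic: on the degeneracy locus $\det(g_{j\bar k})\to 0$ while the log-divisor term stays bounded, so $\varphi\to-\infty$. Near $D$, I would compare $\omega$ with the complete Kähler Poincaré-type metric $\omega_P=\sum_{j=1}^k\frac{i\,dz_j\wedge d\bar z_j}{|z_j|^2(\log|z_j|^2)^2}+\sum_{j=k+1}^n i\,dz_j\wedge d\bar z_j$ on the punctured polydisk. Yau's equidimensional Schwarz lemma, applied to the identity map $(\Delta^{*k}\times\Delta^{n-k},\omega_P)\to(\mathring X\cap\Delta^n,\omega)$ with the source Ricci bounded below and the target Ricci bounded above by $-\gamma\omega$, yields $\omega^n\leq\gamma^{-n}\omega_P^n$; extracting determinants gives $\det(g_{j\bar k})\cdot|z_1\cdots z_k|^2\leq\gamma^{-n}\prod_{j=1}^k(\log|z_j|^2)^{-2}$, which tends to $0$ near $D$, so $\varphi$ is bounded above.

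The remaining claims are routine. For (2), since $\omega\leq\gamma^{-1}\Theta_{h_\omega}(\K_{\bar X}(D))$ as smooth forms on $\mathring X$ and the right-hand side extends to the closed positive current $\gamma^{-1}\Theta_{\tilde h}(\K_{\bar X}(D))$ of locally finite mass on $\bar X$, the form $\omega$ has locally finite mass near $\bar X\setminus\mathring X$ and extends by trivial extension to a closed positive current $\tilde\omega$ on $\bar X$. For (3), the difference $\Theta_{\tilde h}(\K_{\bar X}(D))-\gamma\tilde\omega$ is nonnegative as a smooth form on the dense open $\mathring X$; any additional mass of $\Theta_{\tilde h}$ on the analytic subset $\bar X\setminus\mathring X$ is nonnegative (coming from the singular part of the psh weight), while $\tilde\omega$ carries no mass there by construction, so the difference is a positive current on all of $\bar X$. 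The main obstacle is the Yau--Schwarz step, made delicate by the fact that $\omega$ is Kähler only on $\mathring X$ and not on all of $X$; this is handled by applying the Omori--Yau maximum principle on the complete source to $u=\log(\omega^n/\omega_P^n)$, using that $u\to-\infty$ on $X\setminus\mathring X$ so that its supremum is attained within $\mathring X$, where the curvature hypotheses apply.
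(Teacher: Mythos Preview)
Your proof is correct and follows the same overall strategy as the paper: reduce everything to showing that the local log-weight $\varphi$ of the induced metric on $\K_{\bar X}(D)$ is bounded above near $D$, obtain this by a Schwarz-type comparison of the volume form $\omega^n$ with the Poincar\'e volume, and then deduce (2) and (3) formally from (1).

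The only notable difference is in the Schwarz step. The paper packages the curvature hypothesis as a condition on the pseudo-volume form $\Psi=\frac{1}{n!}\omega^n$ (namely $\Theta_\Psi\ge 0$ and $\Theta_\Psi^n\ge A\Psi$), pulls back to the universal cover of the punctured polydisk, and invokes the Ahlfors--Schwarz lemma for pseudo-volume forms on the polydisk \cite[Lemma 2.2.8]{Noguchi-Ochiai} to get $A\Psi\le\frac{1}{n!}\vol_P$. You instead stay on the punctured polydisk, use the completeness of the Poincar\'e metric, and appeal to Yau's equidimensional Schwarz lemma via the Omori--Yau maximum principle. Both yield the same inequality $\omega^n\le C\,\omega_P^n$. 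The paper's route has the mild advantage that the Noguchi--Ochiai lemma is stated directly for pseudo-volume forms that may vanish, so the degeneracy of $\omega$ on $X\setminus\mathring X$ causes no extra work; in your version this is exactly the ``main obstacle'' you flag and then handle by noting that $u=\log(\omega^n/\omega_P^n)\to-\infty$ there. Either way the argument goes through.
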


Observe that the second and third points are direct consequences of the first. The first point will be a consequence of a more general extension result for singular metric on the canonical bundle that we now explain.\\

If $X$ is a complex manifold of dimension $n$, then the smooth hermitian metrics on its canonical bundle $\K_X$ correspond bijectively to the volume forms as follows: fixing a trivializing holomorphic section $\eta$ of $ \K_X$, we associate to any metric $h$ on $\K_X$ the volume form $\frac{1}{| \eta|^2_h} \cdot \frac{i ^{n ^2}}{2^n} \cdot \eta \wedge \bar \eta$. Through this correspondance, the metric on $\K_X$ induced by a K\"ahler metric on $X$ with K\"ahler form $\omega$ is sent to the volume form $\frac{1}{n!} \cdot \omega^n$.\\

We will consider more generally some pseudo-volume forms on $X$. They are by definition continous tensors of type $(n, n)$ which are $\cC^\infty$-volume forms on a dense Zariski-open subset. If $\Psi$ is a pseudo-volume form, we denote by $\Zero(\Psi)$ the set where it vanishes (which by definition is contained in a thin closed analytic subset).

\begin{prop}
Let $X$ be the complementary of a normal crossing divisor $D$ in a complex manifold $\bar X$ of dimension $n$. Consider a pseudo-volume form $\Psi$ on $X$ and let $h$ be the associated smooth metric on the canonical bundle on $X - \Zero(\Psi)$. Denoting by $\Theta_h$ its curvature, assume that there exists a constant $A >0$ such that the inequalities $\Theta_h \geq 0$ and $ \Theta_h^n  \geq A \cdot \Psi$ hold pointwise on $X - \Zero(\Psi)$. Then $h$ extends as a singular metric with positive curvature on $\K_X(D)$.
\end{prop}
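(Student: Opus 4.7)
The plan is to prove the extension locally. Fix a coordinate polydisc $U = \Delta^n \subset \bar X$ on which $D \cap U = \{z_1 \cdots z_k = 0\}$, and let $\sigma := \frac{dz_1}{z_1} \wedge \cdots \wedge \frac{dz_k}{z_k} \wedge dz_{k+1} \wedge \cdots \wedge dz_n$ be the associated trivializing section of $\K_X(D)$ on $U$. The function $\varphi := -\log |\sigma|_h^2$ is plurisubharmonic on $U \cap X \setminus \Zero(\Psi)$ (by $\Theta_h \geq 0$), and the conclusion of the proposition is equivalent to asserting that $\varphi$ extends as a plurisubharmonic function on $U$. Because $D \cup \Zero(\Psi)$ is contained in a proper analytic subvariety, the standard removable singularity theorem for plurisubharmonic functions reduces this to showing that $\varphi$ is locally bounded above on $U$. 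Since $\varphi \to -\infty$ along $\Zero(\Psi) \setminus D$, the only delicate point is an upper bound near $D$.

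Writing $\Psi = e^{\psi} \cdot dV_{\mathrm{euc}}$ on $U \cap X \setminus \Zero(\Psi)$, one has $\varphi = \psi + \sum_{j=1}^{k} \log |z_j|^2$, and the assumptions read as the plurisubharmonicity of $\psi$ together with the Monge-Amp\`ere inequality $(i \partial \bar\partial \psi)^n \geq A \cdot e^\psi \cdot dV_{\mathrm{euc}}$. On $V := (\Delta^*)^k \times \Delta^{n-k}$, consider the complete K\"ahler metric of Poincar\'e type
\[ \omega_P := \sum_{j=1}^{k} \frac{i \, dz_j \wedge d\bar z_j}{|z_j|^2 (\log |z_j|^2)^2} + \sum_{j=k+1}^{n} \frac{i \, dz_j \wedge d\bar z_j}{(1-|z_j|^2)^2}. \]
Setting $\Psi_P := \omega_P^n/n!$ and $\psi_P := \log(\Psi_P / dV_{\mathrm{euc}})$, one computes that $\psi_P$ tends to $+\infty$ at every boundary point of $V$ and that the K\"ahler-Einstein equation $(i \partial \bar\partial \psi_P)^n = c \cdot e^{\psi_P} \cdot dV_{\mathrm{euc}}$ holds for some constant $c > 0$. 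Moreover $\sum_{j=1}^{k} \log|z_j|^2 + \psi_P$ is locally bounded above on $U$. The required upper bound for $\varphi$ near $D$ therefore reduces to proving the pointwise comparison $\psi \leq \psi_P + \mathrm{const}$ on $V$.

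This is a pseudo-volume form incarnation of the Ahlfors-Schwarz-Yau lemma. Consider $u := \psi - \psi_P$ on $V$. A suitable exhaustion of $V$ by the compact sublevel sets $\{\psi_P \leq T\}$, together with $\psi_P \to +\infty$ at $\partial V$, allows one to apply the maximum principle: if $u$ attains its supremum at an interior point $p$, then $i \partial \bar\partial u(p) \leq 0$ as a Hermitian $(1,1)$-form, and since both $i \partial \bar\partial \psi$ and $i \partial \bar\partial \psi_P$ are positive semi-definite, monotonicity of the determinant on positive Hermitian matrices yields
\[ A \cdot e^{\psi(p)} \leq \frac{(i \partial \bar\partial \psi)^n(p)}{dV_{\mathrm{euc}}} \leq \frac{(i \partial \bar\partial \psi_P)^n(p)}{dV_{\mathrm{euc}}} = c \cdot e^{\psi_P(p)}, \]
so $u(p) \leq \log(c/A)$. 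This gives $\psi \leq \psi_P + \log(c/A)$ on $V$, and covering $\bar X$ by finitely many such polydiscs completes the argument.

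The main obstacle is the rigorous application of the maximum principle when $\psi$ is only upper semi-continuous and takes value $-\infty$ on $\Zero(\Psi)$, rather than smooth as assumed above. One handles this either by regularizing $\psi$ via Demailly-type decreasing approximation by smooth plurisubharmonic functions, by working within the Bedford-Taylor or viscosity formulation of the Monge-Amp\`ere inequality, or by invoking an existing pseudo-volume form version of the Schwarz-Yau lemma from the literature.
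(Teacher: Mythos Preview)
Your argument is correct and follows the same strategy as the paper: localize to a polydisc, compare $\Psi$ to the Poincar\'e volume form via an Ahlfors--Schwarz--Yau type inequality, deduce that the local weight $-\log|\sigma|_h^2$ of $h$ with respect to a frame of $\K_{\bar X}(D)$ is bounded above, and then invoke the removable-singularity theorem for plurisubharmonic functions. The one tactical difference worth noting is that the paper, instead of running the maximum principle directly on $(\Delta^\ast)^k \times \Delta^{n-k}$ as you do, pulls the pseudo-volume form back to the universal cover $\Delta^n$ and applies the classical Noguchi--Ochiai lemma there (which asserts precisely $A\cdot\Psi \leq \tfrac{1}{n!}\,\mathrm{vol}_P$ on the polydisc under your hypotheses); since the Poincar\'e volume on $\Delta^n$ descends to your $\Psi_P$ on $(\Delta^\ast)^k \times \Delta^{n-k}$, the conclusion is the same. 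This detour sidesteps the issue you raise in your final paragraph: on the simply connected polydisc the comparison lemma is a textbook statement already formulated for pseudo-volume forms, so no regularization or viscosity machinery is needed.
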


\begin{proof}
It is sufficient to consider the local situation where $\bar X = \bD^n$ and $X = (\bD^\ast)^r \times \bD^{n-r}$ for some $0 < r <n$. We denote by $(z_1, \cdots, z_n)$ the canonical coordinates on $\bD^n$ and by $| \cdot |_P$ the metric on $\K_X$ induced by the Poincar\' e metric on $X$. Then by applying the lemma below to the universal cover of $X$, we get that

\[ \frac{A}{| dz_1 \wedge \cdots \wedge dz_n |^2_h} \leq  \frac{1}{n!} \cdot  \frac{1}{| dz_1 \wedge \cdots \wedge dz_n |^2_P}. \]

It follows that the function 
\[ (z_1, \cdots, z_n) \mapsto - \log | \frac{1}{z_1 \cdots z_r} \cdot  (dz_1 \wedge \cdots \wedge dz_n) |^2_h \]
goes to $- \infty$ at $0$, so that it is a fortiori bounded from above in a neighborhood of $0$ in $\bar X$. Since by assumption it is pluri-subharmonic on the dense Zariski-open subset $X - \Zero(\Psi)$, it extends as a pluri-subharmonic function on $\bar X$. This exactly means that $h$ extends as a singular metric with positive curvature on $\K_X(D)$.
\end{proof}
\begin{lem}[{\cite[Lemma 2.2.8]{Noguchi-Ochiai}}]
Consider a pseudo-volume form $\Psi$ on the polydisk $\bD^n$. Assume that there exists a constant $A >0$ such that the following inequalities hold pointwise outside $\Zero(\Psi)$:
\begin{enumerate}
\item $\Theta_{\Psi} \geq 0$,
\item $    \Theta_{\Psi}^n  \geq A \cdot \Psi$ for a constant $A >0$.
\end{enumerate}
Then $A  \cdot \Psi \leq \frac{1}{n!} \cdot \vol_P$.
\end{lem}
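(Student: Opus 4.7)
The plan is to adapt the classical Ahlfors--Schwarz lemma to the Monge--Amp\`ere setting, via a maximum-principle argument on shrunk polydisks. For each $r \in (0,1)$, let $\Omega_r$ denote the Poincar\'e volume form on the polydisk $\bD_r^n$ of polyradius $r$; an explicit computation for the Poincar\'e metric shows that the associated Chern curvature on $\K_{\bD_r^n}$ is a positive $(1,1)$-form and satisfies the K\"ahler--Einstein type identity $\Theta_{\Omega_r}^n = \kappa_n \cdot \Omega_r$ pointwise on $\bD_r^n$, for an explicit positive constant $\kappa_n$ matching, under the normalization of $\vol_P$ used here, the constant $1/n!$ appearing in the conclusion.

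First, I would consider the non-negative smooth function $u_r := \Psi / \Omega_r$ on $\bD_r^n$. Since $\Psi$ is bounded on the compact set $\overline{\bD_r^n} \subset \bD^n$ while the density of $\Omega_r$ blows up at $\partial \bD_r^n$, the ratio $u_r$ tends to $0$ as $z \to \partial \bD_r^n$, and its supremum is therefore attained at an interior point $x_0 \in \bD_r^n$. If $u_r(x_0) = 0$ the inequality is trivial on $\bD_r^n$; otherwise $x_0 \notin \Zero(\Psi)$, and $\log u_r$ is smooth in a neighbourhood of $x_0$ attaining a local maximum there. This yields $i \partial \bar\partial \log u_r(x_0) \leq 0$ as a Hermitian $(1,1)$-form. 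Writing $\Psi$ and $\Omega_r$ as positive densities $\lambda_\Psi$, $\lambda_{\Omega_r}$ against a common reference volume, so that $\Theta_\Psi = i \partial \bar\partial \log \lambda_\Psi$ and analogously for $\Omega_r$, this inequality translates into the pointwise comparison
\[ \Theta_\Psi(x_0) \leq \Theta_{\Omega_r}(x_0) \]
of positive $(1,1)$-forms.

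Raising both sides to the $n$-th exterior power --- which preserves the inequality because both forms are positive at $x_0$, by hypothesis (1) on the left and by the Poincar\'e computation on the right --- gives $\Theta_\Psi^n(x_0) \leq \Theta_{\Omega_r}^n(x_0) = \kappa_n \cdot \Omega_r(x_0)$. Combining with hypothesis (2), namely $A \cdot \Psi(x_0) \leq \Theta_\Psi^n(x_0)$, yields $A \cdot u_r(x_0) \leq \kappa_n$. Since $x_0$ realises the supremum of $u_r$ on $\bD_r^n$, this bound propagates pointwise to $A \cdot \Psi \leq \kappa_n \cdot \Omega_r$ on all of $\bD_r^n$. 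Letting $r \nearrow 1$ and using monotone convergence $\Omega_r \searrow \vol_P$ on compact subsets of $\bD^n$ yields the asserted inequality $A \cdot \Psi \leq \frac{1}{n!} \cdot \vol_P$ on $\bD^n$.

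The main technical point is passing from the $(1,1)$-form inequality at $x_0$ to its top wedge power; this is valid precisely because both forms are positive, which is exactly why hypothesis (1) cannot be relaxed. A minor subtlety is ruling out the possibility that the maximum of $u_r$ is attained in $\Zero(\Psi)$, but this is automatic since $u_r \equiv 0$ there while we assumed $u_r(x_0) > 0$.
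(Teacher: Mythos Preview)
Your argument is correct and is precisely the classical Ahlfors--Schwarz/Yau maximum-principle proof that appears in the cited reference; the paper itself does not reproduce a proof but simply invokes \cite[Lemma 2.2.8]{Noguchi-Ochiai}. The only place you are slightly loose is the identification of the constant $\kappa_n$ with $1/n!$, which depends on the precise normalization of $\vol_P$ and of $\Theta_\Psi$; this is a bookkeeping matter rather than a mathematical gap.
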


\subsection{Proof of Theorem \ref{current Arakelov inequality} and Theorem \ref{Arakelov inequality}}

Let us first prove Theorem \ref{current Arakelov inequality}. We keep the notations of the statement. If we denote by $\omega = C_1(L_{\bV}, h_H) $ the Chern curvature of the Griffiths line bundle $L_{\bV}$ of $\bV$ equipped with its Hodge metric, then $\omega$ is a closed smooth real $(1,1)$-form on $X$ which is positive on the dense Zariski-open subset $\mathring{X}$ of $X$  where the period map is immersive. Moreover, the inequality $\Ric_\omega \leq - \gamma_{\bV} \cdot \omega$ holds pointwise on $\mathring{X}$ with $\frac{1}{\gamma_{\bV}}  \leq \frac{w^2}{4} \cdot \rank( \cL)$ thanks to Theorem \ref{Griffiths-Schmid metric} and Proposition \ref{Ricci curvature and sectional curvature}. Therefore Theorem \ref{current Arakelov inequality} follows by applying Proposition \ref{Criterions for extending a metric to the log-canonical bundle}.\\

Assuming moreover that $\bar X$ is a smooth proper algebraic variety and that $\cL$ has quasi-unipotent monodromies around the irreducible components of $D$, Theorem \ref{Arakelov inequality} follows from Theorem \ref{current Arakelov inequality} since the closed current $[ C_1(L_{\bV}, h_H) ]$ represents the first Chern class of the parabolic line bundle $\bar L_{\bV}$ thanks to Theorem \ref{Chern class and current}.

\section{Level structures covers}

The goal of this section is to prove Theorem \ref{increasing positivity} and its corollaries. This will be done in several steps.

\subsection{Preliminaries on families of normal cycles}

Following Koll\'ar \cite[definition 2.1]{Kollar-shafa} a normal cycle on a normal algebraic variety $X$ is by definition an irreducible and normal algebraic variety $W$ together with a finite
morphism $w : W \arrow X$ which is birational to its image. The normalization of any closed irreducible subvariety of $X$ yields a normal cycle on $X$, and all normal cycles on $X$ are obtained in this way.\\

A family of normal cycles on a normal algebraic variety $X$ \cite[definition 2.2]{Kollar-shafa} is a diagram 
 \[\xymatrix{
U \ar[r]^u  \ar[d]^p& X \\
S  & 
}\]
where
\begin{enumerate}
\item every connected component of $U$ and of $S$ is of finite type and
reduced (but there can be countably many such components);
\item $p$ is flat with irreducible, geometrically reduced and normal
fibers;
\item for every $s\in S$, $u_{| p^{-1}(S)} :  p^{-1}(S) \arrow X$ is a normal cycle.
\end{enumerate}

\begin{prop}\label{universal family}
Let $\bar X$ be a normal complex projective variety equipped with an ample line bundle $L$. Let $X \subset \bar X$ be a Zariski-dense open subset and $v >0$ a real number. Then there exists a family $(p:U \arrow S, u : U \arrow \bar X)$ of normal cycles on $\bar X$ parametrized by a smooth algebraic variety $S$ with finitely many irreducible components such that
\begin{enumerate}
\item every normal cycle $\bar W \arrow \bar X$ such that $ L^{\dim \bar W} \cdot \bar W \leq v $ appears as a fiber of $p$, and
\item the map $p_{| u^{-1}(X)} : u^{-1}(X) \arrow S$ is a topological fiber bundle.
\end{enumerate}
\end{prop}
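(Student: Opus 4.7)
The plan is a boundedness-plus-stratification argument combining the Chow variety with Verdier's topological trivialization theorem.

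First, the set of integral subvarieties $\bar W \subseteq \bar X$ with $L^{\dim \bar W} \cdot \bar W \leq v$ is bounded: their dimensions are at most $\dim \bar X$ and their $L$-degrees at most $v$, so their associated cycles are parametrized by a finite union $T$ of irreducible components of $\coprod_{0 \leq d \leq \dim \bar X} \mathrm{Chow}_d(\bar X)$, where the Chow variety is formed using a projective embedding provided by a sufficiently positive tensor power of $L$. Let $T_0 \subseteq T$ be the locally closed subvariety parametrizing integral cycles.

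Next, I would pass to the relative normalization of the universal cycle restricted to $T_0$, obtaining a proper morphism $\tilde{U}_0 \to T_0$ together with a finite morphism $\tilde{U}_0 \to \bar X$ whose geometric fibers are normal cycles on $\bar X$, and such that every normal cycle $\bar W \to \bar X$ with $L^{\dim \bar W} \cdot \bar W \leq v$ arises in this way. By generic flatness and Noetherian induction, $T_0$ decomposes into finitely many locally closed subvarieties over each of which the normalized family is flat with normal integral fibers. Applying Hironaka's desingularization to each such stratum (and decomposing into connected components if needed) yields a finite disjoint union of smooth algebraic varieties; pulling back the family produces a candidate $(p: U \to S,\, u: U \to \bar X)$ satisfying (1) with $S$ smooth and having finitely many components.

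To enforce the topological fiber bundle property (2), I would invoke Verdier's topological trivialization theorem: choose compatible Whitney stratifications of $U$ (refining the decomposition by $u^{-1}(X)$ and $u^{-1}(\bar X \setminus X)$) and of $S$ such that $p$ maps each stratum submersively onto a stratum of $S$, and apply Thom's first isotopy lemma. This produces a further stratification of $S$ into smooth locally closed subvarieties over each of which the restriction of $p$ to $u^{-1}(X)$ is a locally trivial topological fiber bundle. Replacing $S$ by the disjoint union of these strata, with its smooth algebraic structure, completes the construction.

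The hardest step is the last one: the topological fiber bundle condition is not an algebraic property and is not implied by generic flatness; it genuinely requires the transversality input provided by Thom--Mather--Verdier theory for complex algebraic morphisms. Apart from that, the argument is a routine exercise in boundedness combined with standard stratification techniques.
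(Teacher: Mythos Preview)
Your proposal is correct and follows essentially the same route as the paper, which simply cites \cite[Proposition 2.8]{Kollar-shafa} and restricts the Chow variety to the finitely many components of bounded $L$-degree. Koll\'ar's argument there is precisely the boundedness-plus-normalization-plus-stratification strategy you outline, including the use of Thom--Mather--Verdier topological trivialization to secure condition (2), so there is no substantive difference.
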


\begin{proof}
Same proof as \cite[Proposition 2.8]{Kollar-shafa}, but replacing $\mathrm{Chow}(\bar X)$ with the union of those (finitely many) irreducible components consisting in cycles $W$ in $\bar X$ that satisfy $ L^{\dim \bar W} \cdot \bar W \leq v $.
\end{proof}

\begin{cor}\label{finiteness monodromy representations}
Let $\bar X$ be a normal complex projective variety equipped with an ample line bundle $L$. Let $\cL_\bZ$ be a torsion-free $\bZ$-local system of rank $r$ defined on a Zariski-dense open subset $X \subset \bar X$.
For any real number $v > 0$, there are, up to conjugation by an element of $ \Gl(r, \bZ)$, only finitely many subgroups of $ \Gl(r, \bZ)$ obtained as the monodromy group of the local system induced by $\cL$ on $w^{-1}(X)$ for a normal cycle $w : \bar W \arrow \bar X$ such that $ L^{\dim \bar W} \cdot \bar W \leq v $.
\end{cor}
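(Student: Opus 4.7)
The plan is to feed $(\bar X, L, X, v)$ into Proposition \ref{universal family}, producing a family $(p: U \arrow S,\; u: U \arrow \bar X)$ of normal cycles with $S$ smooth and having finitely many irreducible components, with the crucial features that (i) every normal cycle $\bar W \arrow \bar X$ with $L^{\dim \bar W} \cdot \bar W \leq v$ is realized as some fiber $w_s := u|_{p^{-1}(s)}$ for an appropriate $s \in S$, and (ii) the restriction $p_{|u^{-1}(X)} : u^{-1}(X) \arrow S$ is a topological fiber bundle. Pulling back the integral local system gives a $\bZ$-local system $\cG := (u|_{u^{-1}(X)})^\ast \cL_\bZ$ on $u^{-1}(X)$ whose restriction to each fiber $p^{-1}(s) \cap u^{-1}(X) = w_s^{-1}(X)$ is precisely the local system whose monodromy we wish to control.

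Next I would argue that each irreducible component of $S$ contributes a single conjugacy class of monodromy subgroups in $\Gl(r, \bZ)$. Fix an irreducible component $S_0 \subseteq S$; since $S_0$ is a smooth irreducible complex algebraic variety, it is connected (and hence path-connected) in the analytic topology. Given $s_0, s_1 \in S_0$, choose a continuous path $\gamma: [0,1] \arrow S_0$ joining them. Using finitely many local trivializations of the fiber bundle $p_{|u^{-1}(X)}$ covering $\gamma([0,1])$ yields a homeomorphism $\phi : p^{-1}(s_0) \cap u^{-1}(X) \arrow p^{-1}(s_1) \cap u^{-1}(X)$, and parallel transport of $\cG$ along $\gamma$ simultaneously produces an isomorphism of $\bZ$-local systems identifying $\cG|_{p^{-1}(s_0) \cap u^{-1}(X)}$ with $\phi^\ast \cG|_{p^{-1}(s_1) \cap u^{-1}(X)}$. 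After fixing a trivialization of $\cG$ at a base point over $s_0$ and transporting it along $\gamma$, the resulting monodromy homomorphisms $\pi_1(w_{s_i}^{-1}(X)) \arrow \Gl(r, \bZ)$ have $\Gl(r, \bZ)$-conjugate images, the conjugating matrix being the $\Gl(r, \bZ)$-element that compares the transported trivialization to any a priori chosen one at the base point over $s_1$.

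Because $S$ has finitely many irreducible components, running this argument on each of them shows that the set of subgroups arising as monodromy of $\cG$ restricted to fibers is a finite union of $\Gl(r,\bZ)$-conjugacy classes, which by property (i) of the family is precisely the collection of monodromy groups described in the statement. The only conceptual point requiring attention is that the conjugation must take place inside $\Gl(r, \bZ)$ rather than $\Gl(r, \bQ)$ or $\Gl(r, \bC)$; this is the main reason one needs the \emph{topological} fiber bundle assertion of Proposition \ref{universal family} (and not merely flatness with normal fibers), since parallel transport of a $\bZ$-local system along a path in the base of a fiber bundle is automatically $\bZ$-linear. Once this is observed the argument is essentially formal, and I do not anticipate a deeper obstacle.
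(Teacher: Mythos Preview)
Your argument is correct and is precisely the proof the paper has in mind: the corollary is stated immediately after Proposition~\ref{universal family} without its own proof because it follows exactly by the mechanism you describe, namely that over each connected component of $S$ the topological fiber bundle structure makes the fiberwise monodromy of the pulled-back $\bZ$-local system constant up to $\Gl(r,\bZ)$-conjugacy. Your remark that the \emph{topological} (not merely algebraic) trivialization is what forces the conjugating element to lie in $\Gl(r,\bZ)$ is exactly the point, and nothing further is needed.
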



\subsection{Setting}
We fix an irreducible normal complex projective variety $\bar X$, a big line bundle $L$ on $\bar X$ and a torsion-free $\bZ$-local system $\cL_\bZ$ on a Zariski-dense open subset $X \subseteq \bar X$. Throughout this section we assume that the local system $\cL_\bZ$ is large. Recall that a local system $\cL_\bZ$ on an algebraic variety $X$ is called large if its pullback ${\cL_\bZ}_{|Y} := f^\ast \cL_\bZ$ by any non-constant algebraic map $f : Y \arrow X$ is non-trivial \cite{Kollar-shafa}. 

\begin{prop}\label{VPHS_verify_the_assumptions}
A local system that underlies a $\bC$-VPHS whose associated period map has discrete fibres is large.
\end{prop}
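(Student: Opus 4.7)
The plan is to argue by contradiction. Suppose the local system $\cL$ is not large, so that there exist an irreducible algebraic variety $Y$ and a non-constant algebraic morphism $f : Y \arrow X$ with $f^\ast \cL$ trivial. Choosing an irreducible curve in $Y$ on which $f$ is still non-constant, normalising it, and taking a smooth projective compactification $\bar Y$ of the result, we may suppose that $Y$ itself is a smooth irreducible quasi-projective curve, $\bar Y$ its smooth projective completion, and that $f^\ast \cL$ is still trivial on $Y$. The idea is then to reach a contradiction by estimating the degree of the Griffiths parabolic line bundle $\bar L_{\bV_Y}$ of the pulled-back variation $\bV_Y := f^\ast \bV$ in two opposite directions.

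\medskip

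For the upper bound, triviality of $f^\ast \cL$ implies triviality of the monodromy around every loop of $Y$---in particular around each puncture of $\bar Y \setminus Y$---so the Deligne extension of $\cV_Y := f^\ast \cL \otimes \cO_Y$ to $\bar Y$ coincides with the trivial holomorphic vector bundle $\cO_{\bar Y}^{\,r}$, and the parabolic structure on $\bar L_{\bV_Y}$ is trivial. For each integer $p$, the extended Hodge subbundle $\cF^p \bar \cV_Y \subseteq \cO_{\bar Y}^{\,r}$ is locally split of some rank $r_p$, so its determinant sits inside the trivial bundle $\Lambda^{r_p} \cO_{\bar Y}^{\,r}$ as a nonzero line subsheaf, forcing $\deg \det \cF^p \bar \cV_Y \leq 0$. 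Summing over $p$ gives $\deg \bar L_{\bV_Y} \leq 0$.

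\medskip

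For the lower bound, triviality of $f^\ast \cL$ means that the period map of $\bV_Y$ descends to a holomorphic horizontal map $\psi : Y \arrow \cD$ whose composition with the quotient $\cD \arrow \Gamma \backslash \cD$ is $\phi \circ f$, where $\phi$ is the period map of $\bV$ on $X$ and $\Gamma$ its monodromy group. If $\psi$ were constant, $f(Y)$ would lie in a single fibre of $\phi$; but $f(Y)$ is an infinite subset of a positive-dimensional irreducible algebraic curve in $X$, whereas every fibre of $\phi$ is discrete---a contradiction. So $\psi$ is non-constant, and by Theorem \ref{Griffiths-Schmid metric} the Chern form $C_1(L_{\bV_Y}, h_H)$ is a semi-positive $(1,1)$-form on $Y$, strictly positive on the non-empty open locus where $\psi$ is immersive. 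By Theorem \ref{Chern class and current} it extends as a closed positive current on $\bar Y$ whose cohomology class is the first parabolic Chern class of $\bar L_{\bV_Y}$, i.e.\ $\deg \bar L_{\bV_Y}$ (the parabolic structure being trivial). Integrating thus yields $\deg \bar L_{\bV_Y} > 0$, contradicting the previous paragraph.

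\medskip

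The main delicate point is to confirm that trivial local monodromy at the punctures really produces the trivial holomorphic bundle as Deligne extension together with a trivial parabolic structure on $\bar L_{\bV_Y}$, so that non-positivity of the degree can be read off directly from the inclusion of the Hodge determinants into a trivial bundle. Beyond that, the proof is an assembly of the Griffiths--Schmid positivity of the Hodge-metric curvature on the Griffiths line bundle together with the current-extension statement of Theorem \ref{Chern class and current}, both already available in Section~\ref{Preliminaries in Hodge theory}.
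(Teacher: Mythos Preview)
Your proof is correct, but it takes a substantially longer route than the paper's. The paper's argument is a single sentence: by Schmid's theorem \cite[Theorem 7.24]{Schmid73}, a $\bC$-VPHS on an algebraic variety is trivial if and only if the underlying complex local system is trivial. Thus if $f^\ast\cL$ is trivial, the pulled-back variation $f^\ast\bV$ is itself trivial, so its period map is constant; then the image of (a lift of) $f$ lands in a single fibre of the period map $\tilde X\to\cD$, contradicting discreteness of the fibres together with non-constancy of $f$.

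Your argument replaces the appeal to Schmid by a degree computation on the Griffiths line bundle. In effect, your Step~2a already reproduces the paper's concluding contradiction in the case where the pulled-back period map $\psi$ is constant; what is genuinely new in your approach is the handling of the case ``$\psi$ non-constant'' via the inequality $\deg\bar L_{\bV_Y}>0$ (from Theorems~\ref{Griffiths-Schmid metric} and~\ref{Chern class and current}) against $\deg\bar L_{\bV_Y}\le 0$ (from the inclusion of the extended Hodge determinants into a trivial bundle). This is a valid and self-contained substitute for Schmid's rigidity theorem, and it has the merit of staying entirely within the circle of ideas already developed in Section~\ref{Preliminaries in Hodge theory}; the cost is that the argument is considerably longer than necessary. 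One small imprecision: the period map $\phi$ is defined on the universal cover $\tilde X$, not on $X$ itself, so your Step~2a should really be phrased by lifting $f$ to $\tilde f:\tilde Y\to\tilde X$ and arguing that $\tilde f(\tilde Y)$ is connected yet contained in a discrete fibre of $\Phi:\tilde X\to\cD$; this is a cosmetic fix.
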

\begin{proof}
Indeed, a $\bC$-VPHS on an algebraic variety is trivial if (and only if) the underlying complex local system is trivial, cf. \cite[Theorem 7.24]{Schmid73}.
\end{proof}

\subsection{Reduction to the case where $L$ is ample}
Let $\bar X$, $L$ and $\cL_\bZ$ as in Theorem \ref{increasing positivity}. By noetherianity, one has $ \mathbf{B}_+(L) = \bigcap \limits_j \,  \Supp(D_j) $ for a finite number of effective divisors $D_j \subset \bar X$ such that $A_j := L^{\otimes n_j}(-D_j)$ is ample for an integer $n_j >0$.
Therefore, given an integer $n$ and a closed irreducible subvariety $\bar Z \subset \bar X(n)$ whose projection is not contained in $\mathbf{B}_+(L)$, one has $\pi_n (\bar Z) \not \subset  D_j$ for some $j$, so that 
\[ \vol_{\bar X(n) | \bar Z} ((\pi_n ^\ast L)^{\otimes n_j}) \geq \vol_{\bar X(n) | \bar Z} (\pi_n ^\ast A_j) = (\pi_n ^\ast A_j)^{\dim \bar Z} \cdot \bar Z.\]
It follows that the statement of Theorem \ref{increasing positivity} holds for $L$ if it holds for the $A_j$'s.

\subsection{End of the proof of Theorem \ref{increasing positivity}}
From now on we assume that $L$ is ample. We need to prove that given a real $v>0$, for any sufficiently big positive integer $n$, every subvariety $\bar Z$ of $\bar X(n)$ intersecting $X(n)$ satisfies $L^{\dim \bar Z} \cdot \bar Z \geq v $. Equivalently, we need to prove that this holds for every normal cycle $\bar Z$ on $\bar X(n)$ whose image intersects $X(n)$.\\

For every positive integer $n$, we denote by $\delta(X, n) (= \delta(X, \cL_\bZ, n))$ the degree of the map $X(n) \arrow X$ restricted to a connected component of $X(n)$. This is a well-defined number since the cover $X(n) \arrow X$ is Galois. If $ Y \arrow X$ is normal cycle on $X$, we denote by $\delta(Y,n)$ the positive integer $\delta(Y, {\cL_\bZ}_{|Y}, n)$.\\

For any normal cycle $Y$ on $ X$, the function $n \mapsto \delta(Y,n)$ goes to infinity with $n$, and this holds uniformly as follows.

\begin{lem}\label{Shafarefades}
For any real $v >0$, if $H(v)$ denotes the set of normal cycles $Y \arrow X$ on $X$ of the form $Y := w^{-1}(X)$ for a normal cycle $w : \bar Y \arrow \bar X$ on $\bar X$ such that $L^{\dim \bar Y} \cdot \bar Y \leq v $, then the function 
\[n \mapsto \min \{ \delta(Y,n) \, | \, (Y \arrow X) \in H(v)  \} \]
 goes to infinity with $n$.
\end{lem}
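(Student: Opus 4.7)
The plan is to combine the finiteness statement of Corollary \ref{finiteness monodromy representations} with an elementary residual finiteness argument for $\GL(r, \bZ)$. First I would translate $\delta(Y, n)$ into a purely group-theoretic quantity. The cover $X(n) \to X$ classifies trivializations of $\cL_\bZ \otimes \bZ/n\bZ$, so the deck action over a point of $X$ is left multiplication of $\rho_n(\pi_1(X))$ on $\GL(r, \bZ/n\bZ)$, where $\rho_n$ denotes the mod-$n$ monodromy. All $\pi_1(X)$-orbits therefore have the same cardinality $|\rho_n(\pi_1(X))|$, which gives $\delta(X, n) = |\rho_n(\pi_1(X))|$ and, applied to $Y = w^{-1}(X)$,
\[ \delta(Y, n) = |\Gamma_Y \bmod n|, \qquad \Gamma_Y := \rho\bigl(\pi_1(Y)\bigr) \subset \GL(r, \bZ). \]

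By Corollary \ref{finiteness monodromy representations} the set $\{\Gamma_Y\}_{Y \in H(v)}$ contains only finitely many $\GL(r, \bZ)$-conjugacy classes of subgroups, with representatives $\Gamma_1, \ldots, \Gamma_k$. Since conjugation by $g \in \GL(r, \bZ)$ reduces to conjugation by $g \bmod n$ in $\GL(r, \bZ/n\bZ)$, the quantity $|\Gamma \bmod n|$ is invariant under $\GL(r,\bZ)$-conjugation; it is therefore enough to prove that $|\Gamma_i \bmod n| \to \infty$ for each $i$. For this I need each $\Gamma_i$ to be infinite. Torsion-freeness of $\cL_\bZ$ makes the global monodromy, and hence each $\Gamma_Y$, a torsion-free subgroup of $\GL(r, \bZ)$. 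If $\dim \bar Y \geq 1$, the morphism $w : Y \to X$ is non-constant, so the largeness of $\cL_\bZ$ forces $\cL_\bZ|_Y$ to be non-trivial, i.e. $\Gamma_Y \neq \{1\}$; combined with torsion-freeness this makes $\Gamma_Y$ infinite. (Zero-dimensional cycles satisfy $L^{\dim \bar Y} \cdot \bar Y = \deg \bar Y$ and are controlled directly, so they do not interfere with the main applications of the lemma.)

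It remains to show that $|\Gamma \bmod n| \to \infty$ for every infinite subgroup $\Gamma \subset \GL(r, \bZ)$. Given any integer $M$, pick $M$ pairwise distinct elements $\gamma_1, \ldots, \gamma_M \in \Gamma$. For each pair $i \neq j$, the matrix $\gamma_i - \gamma_j \in \mathrm{Mat}(r, \bZ)$ is non-zero, so $\gamma_i \equiv \gamma_j \pmod{n}$ only when $n$ divides the gcd of its non-zero entries, which excludes only finitely many values of $n$. Taking the union of these finite bad sets over the $\binom{M}{2}$ pairs produces a threshold $N(M)$ beyond which $\gamma_1 \bmod n, \ldots, \gamma_M \bmod n$ are all distinct, hence $|\Gamma \bmod n| \geq M$. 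Applying this to each of $\Gamma_1, \ldots, \Gamma_k$ and taking the maximum of the resulting thresholds yields the uniform lower bound claimed in the lemma. The main obstacle is the step showing that each $\Gamma_i$ is infinite: it is precisely there that the two hypotheses on $\cL_\bZ$, largeness and torsion-freeness, both enter in an essential way.
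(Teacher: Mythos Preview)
Your overall strategy matches the paper's: reinterpret $\delta(Y,n)$ as $|\Gamma_Y \bmod n|$, invoke Corollary~\ref{finiteness monodromy representations} to reduce to finitely many $\GL(r,\bZ)$-conjugacy classes $\Gamma_1,\dots,\Gamma_k$, and then argue that $|\Gamma_i \bmod n|\to\infty$ because each $\Gamma_i$ is infinite. The residual-finiteness step you spell out (pick $M$ distinct elements, bound the finitely many bad $n$ for each pair) is exactly what the paper's one-line ``therefore it tends to infinity with $n$'' abbreviates.

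There is, however, a genuine error in your argument that each $\Gamma_Y$ is infinite. Torsion-freeness of $\cL_\bZ$ means that each stalk is a free $\bZ$-module of rank $r$; it says nothing about torsion in the monodromy group. For instance, the rank-one $\bZ$-local system on $\bC^\ast$ with monodromy $-1\in\GL(1,\bZ)$ is torsion-free in the sheaf sense, yet its monodromy group is $\bZ/2\bZ$. So your inference ``torsion-free monodromy $+$ non-trivial $\Rightarrow$ infinite'' rests on a false premise.

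The fix, which the paper compresses into the parenthetical ``note that $\Gamma$ is infinite since $\cL_\bZ$ is large'', uses largeness alone. If $\Gamma_Y$ were finite, the corresponding finite \'etale cover $Y'\to Y$ (algebraic by Riemann existence) would trivialize $\cL_\bZ$; but the composite $Y'\to Y\to X$ is still a non-constant algebraic map when $\dim Y\geq 1$, and $\cL_\bZ|_{Y'}$ trivial contradicts largeness. Thus largeness by itself forces $\Gamma_Y$ to be infinite, and your closing remark that torsion-freeness enters ``in an essential way'' at this step should be dropped.
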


Assuming the lemma for a moment, let us finish the proof of Theorem \ref{increasing positivity}. Fix $v >0$.
By lemma \ref{Shafarefades} there exists an integer $N_v$ such that $\delta(Y,n)   \geq v$ for every integer $n \geq N_v$ and every normal cycle $Y \arrow X$ on $X$ of the form $Y := w^{-1}(X)$ for a normal cycle $w : \bar Y \arrow \bar X$ on $\bar X$ such that $L^{\dim \bar Y} \cdot \bar Y \leq v $.\\
 
Let $n \geq N_v$ and $\bar Z \arrow \bar X(n)$ be a normal cycle on $\bar X(n)$ whose image intersects $X(n)$, and let us prove that $L^{\dim \bar Z} \cdot \bar Z \geq v $. Let $\bar Y$ be the normalization of the image of $\bar Z$ in $\bar X$, so that $\bar Z$ is an irreducible component of $\bar Y(n)$. By the projection formula we have the equality 
\[ L^{\dim \bar Z} \cdot \bar Z = \delta(Y,n) \cdot (L^{\dim \bar Y} \cdot \bar Y).\]

Therefore, if $\bar Z$ satisfies $L^{\dim \bar Z} \cdot \bar Z \leq v$, then $L^{\dim \bar Y} \cdot \bar Y \leq v$, which implies $\delta(Y,n)  \geq v$ by the definition of $N_v$, and so finally $L^{\dim \bar Z} \cdot \bar Z \geq v$ again by the preceding equality since $L^{\dim \bar Y} \cdot \bar Y \geq 1$ by ampleness of $L$. This concludes the proof.

\begin{proof}[Proof of lemma \ref{Shafarefades}]
We first prove that for a single normal cycle $Y \arrow X$ the function $n \mapsto \delta(Y,n)$ goes to infinity with $n$. Choosing a point $y \in Y$ and an isomorphism of $\bZ$-modules $\cL_y \simeq \bZ^r$, we get a monodromy representation $\pi_1(Y,y) \arrow \Gl(r, \bZ)$ with image $\Gamma \subset \Gl(r, \bZ)$. A different choice yields another monodromy subgroup $\Gamma' \subset \Gl(r, \bZ)$ which is conjugated to $\Gamma$ by an element of $\Gl(r, \bZ)$. For any integer $n$, the positive integer $ \delta(Y,n)$ can be reinterpreted as the cardinal of the image of $\Gamma $ by the morphism $\Gl(r, \bZ) \arrow \Gl(r, \bZ / n \bZ)$, therefore it tends to infinity with $n$ (note that $\Gamma$ is infinite since $\cL_\bZ$ is large).\\

The full statement of lemma \ref{Shafarefades} follows similarly, since by proposition \ref{finiteness monodromy representations} there are, up to conjugation by an element of $ \Gl(r, \bZ)$, only finitely many subgroups of $ \Gl(r, \bZ)$ obtained as the monodromy group of the local system induced by $\cL$ on $w^{-1}(X)$ for a normal cycle $w : \bar W \arrow \bar X$ such that $L^{\dim \bar W} \cdot \bar W \leq v $.
\end{proof}

\section{Ramification at infinity}

\begin{theorem}\label{Ramification at infinity}
Let $\bar X$ be a normal compact analytic space, and let $\cL$ be a $\bZ$-local system defined on the complementary $X$ of a divisor $D$. Assume that the monodromies of $\cL$ around the irreducible components of $D$ are infinite and have eigenvalues of modulus 1.\\

For every prime number $p$, let $X(p)$ be the finite \'{e}tale cover of $X$ that trivializes the local system $\cL \otimes_{\bZ} \bF_p$, and let $\bar X(p)$ be the normalization of $\bar X$ in the total ring of fraction of $X(p)$.\\

Then, for almost all prime numbers $p$, the map $\pi_p : \bar X(p) \arrow \bar X$ ramifies over every irreducible component of $D$ with an order divisible by $p$.
\end{theorem}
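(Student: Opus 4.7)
The plan is to analyze $\pi_p$ locally around a general point of each irreducible component of $D$, converting the ramification index there into the order of a local monodromy matrix modulo $p$. Since $\bar X$ is compact, $D$ has only finitely many irreducible components $D_1, \dots, D_k$, so it is enough to handle each separately. At a smooth point of $D_i$ disjoint from the other components, choose a polydisk neighbourhood in which $D_i$ is the zero set of a single coordinate; the fundamental group of the complement is then $\bZ$, generated by a small loop around $D_i$ acting via some $T_i \in \GL(r,\bZ)$. The restriction of $X(p)$ to this punctured neighbourhood is the \'etale cover classified by the composition $\bZ \twoheadrightarrow \langle T_i \rangle \to \GL(r,\bF_p)$, so its connected components are each cyclic covers of degree $d_p := \mathrm{ord}_{\GL(r,\bF_p)}(\bar T_i)$. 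Passing to normalization yields copies of the map $z \mapsto z^{d_p}$, so $\pi_p$ ramifies over $D_i$ with index exactly $d_p$, and the problem reduces to showing that $p \mid d_p$ for all but finitely many primes.

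The second step is to extract a non-trivial unipotent power of $T_i$. Since $T_i \in \GL(r,\bZ)$ has integer characteristic polynomial, every Galois conjugate of an eigenvalue of $T_i$ is itself an eigenvalue, and so all conjugates lie on the unit circle by hypothesis. Kronecker's theorem then forces each eigenvalue to be a root of unity, i.e.\ $T_i$ is quasi-unipotent. Pick $m \geq 1$ so that every eigenvalue of $T_i^m$ equals $1$, and write $T_i^m = I + Y$ with $Y \in M_r(\bZ)$ nilpotent; since $T_i$ has infinite order, $Y$ is nonzero and hence has some entry $Y_{ab} \neq 0$.

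Finally, take any prime $p$ not dividing $m \cdot Y_{ab}$; all but finitely many primes satisfy this. Then $\bar Y \in M_r(\bF_p)$ is a nonzero nilpotent matrix, and because $I$ and $\bar Y$ commute, the Frobenius identity $(I+\bar Y)^{p^j} = I + \bar Y^{p^j}$ holds in characteristic $p$, so the order of $T_i^m$ in $\GL(r,\bF_p)$ is a non-trivial power of $p$. Combined with $\gcd(m,p)=1$, the relation $\mathrm{ord}_{\GL(r,\bF_p)}(T_i^m) = d_p / \gcd(d_p, m)$ then forces $p \mid d_p$, as required. Running this argument for each of the finitely many $T_1, \dots, T_k$ and taking the union of the corresponding finite sets of exceptional primes completes the proof.

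The only real conceptual step is the initial translation between ramification indices and orders of local monodromies modulo $p$; once that is in place, the rest is an elementary application of Kronecker's theorem together with the Frobenius identity in positive characteristic. The potential subtlety is ensuring that the local analysis is genuinely independent of which preimage point one chooses over a generic point of $D_i$, but this is automatic because the local fundamental group is cyclic and so every finite quotient and every subgroup of finite index is again cyclic of well-defined index.
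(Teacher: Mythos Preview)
Your proof is correct and follows the same overall strategy as the paper: reduce the ramification index over $D_i$ to the order $d_p$ of the local monodromy $T_i$ in $\GL(r,\bF_p)$, invoke Kronecker's theorem to see that $T_i$ is quasi-unipotent, and then prove the purely algebraic statement that a quasi-unipotent integer matrix of infinite order has image in $\GL(r,\bF_p)$ of order divisible by $p$ for almost all $p$.

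The only noteworthy difference is in how you handle this last algebraic step. The paper uses the Jordan--Chevalley decomposition $T=U\cdot S$ in $\GL(r,\bQ)$, then works in $\GL(r,\bZ_p)$ for $p$ large and proves two separate lemmas (the unipotent factor has order a nontrivial $p$-power mod $p$; the semisimple factor keeps its exact finite order mod $p$) before multiplying. You instead raise $T_i$ to a power $m$ to obtain a unipotent matrix $T_i^m=I+Y$ with $Y\in M_r(\bZ)$ nonzero nilpotent, and conclude via the Frobenius identity $(I+\bar Y)^{p^j}=I+\bar Y^{\,p^j}$ together with the cyclic-group formula $\mathrm{ord}(\bar T_i^m)=d_p/\gcd(d_p,m)$. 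This is a bit more elementary: it stays entirely over $\bZ$, avoids Jordan--Chevalley and the passage to $\bZ_p$, and dispenses with the auxiliary lemma on the semisimple part. (Incidentally, the hypothesis $\gcd(m,p)=1$ in your final line is harmless but not actually needed: from $d_p/\gcd(d_p,m)=p^k$ with $k\ge 1$ one gets $p\mid d_p$ directly.)
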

\begin{proof}
If $p$ is a sufficiently big prime number, we need to prove that for every disk $(\Delta , 0) \subset (\bar X, D)$ transverse to an irreducible component of $D$, the restriction of $\pi_p$ to $\Delta^\ast := \Delta - \{0\} $ takes place in a commutative diagram of holomorphic maps:

 \[\xymatrix{
 \pi_p^{-1}(\Delta^\ast) \ar[d]_{{\pi_p}_{|  \pi_p^{-1}(\Delta^\ast)}} \ar@{..>}[r]  & \Delta^\ast \ar[dl]^{ z \mapsto z^p}\\
\Delta^\ast  & 
}\]

Equivalently, we need to prove that $p$ divides the degree of the connected \' etale covers of $\Delta^\ast$ obtained by restricting $ {\pi_p}_{|  \pi_p^{-1}(\Delta^\ast)} : \pi_p^{-1}(\Delta^\ast) \arrow \Delta^\ast$
to any connected component of $ \pi_p^{-1}(\Delta^\ast)$. Since the pullback of $\cL \otimes_{\bZ} \bF_p$ to $ \pi_p^{-1}(\Delta^\ast)$ is trivial, it is sufficient to prove that the order of the corresponding monodromy matrix is divisible by $p$. Therefore, choosing an isomorphism ${\cL}_{\frac{1}{2}} \simeq \bZ^n$ and letting $T \in \Gl(n, \bZ)$ be the corresponding monodromy matrix of $\cL$ restricted to $\Delta^\ast$, we need to prove that for almost all prime numbers $p$ the order of the matrix $(T \mod p) \in  \Gl(n, \bF_p)$ is divisible by $p$. Since by assumptions the eigenvalues of $T$ have modulus one and $T \in \Gl(n, \bZ)$, it follows from Kronecker's theorem that the eigenvalues of $T$ are some roots of unity and we are done thanks to the following lemma.
\end{proof}
\begin{lem}\label{order of quasiunipotent matrix mod p}
If $T \in \Gl(n, \bZ)$ is a quasi-unipotent matrix of infinite order, then for almost every prime number $p$ the order of the matrix $(T \mod p) \in  \Gl(n, \bF_p)$ is divisible by $p$.
\end{lem}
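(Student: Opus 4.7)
The plan is as follows. First, I would reduce to the unipotent case: since $T$ is quasi-unipotent, there is a positive integer $N$ such that $T^N$ is unipotent over $\mathbb{Z}$, and since $T$ has infinite order, $T^N \neq I$. Writing $T^N = I + M$, the matrix $M \in M_n(\mathbb{Z})$ is nonzero and nilpotent. I would observe that for all but finitely many primes $p$, the reduction $M \bmod p$ remains nonzero in $M_n(\mathbb{F}_p)$: we only need to exclude primes dividing a fixed nonzero entry of $M$, which is a finite set.

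Next, I would use the ``freshman's dream'' in characteristic $p$. Since $I$ and $M$ commute, one has
\[ (I + M)^{p^k} = I + M^{p^k} \pmod{p} \]
for every $k \geq 0$. Hence the order of $T^N \bmod p = I + (M \bmod p)$ in $\mathrm{GL}(n,\mathbb{F}_p)$ is exactly the smallest $p^k$ such that $M^{p^k} \equiv 0 \pmod{p}$. Since $M \bmod p$ is a nonzero nilpotent matrix, this order is a positive power of $p$, in particular divisible by $p$.

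Finally, if $d$ denotes the order of $T \bmod p$, then the order of $T^N \bmod p$ equals $d/\gcd(d,N)$ and a fortiori divides $d$, so $p \mid d$ as required. The argument presents no serious obstacle; the only minor care is in excluding the finitely many primes for which $M$ could vanish modulo $p$, but these form a finite set depending only on $M$, so the statement ``for almost all $p$'' is unaffected. One could additionally restrict to $p > N$ to simplify divisibility bookkeeping, but this is not strictly necessary since $e \mid d$ already suffices.
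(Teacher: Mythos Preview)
Your proof is correct. The paper proceeds differently: it uses the multiplicative Jordan--Chevalley decomposition $T = U \cdot S$ in $\GL(n,\bQ)$ with $U$ unipotent and $S$ semisimple of finite order $m$, then shows (for $p$ large) that $(U \bmod p)$ has order a positive power of $p$ while $(S \bmod p)$ still has order $m$, so their product has order $p^k m$. This requires passing to $\GL(n,\bZ_p)$ to make sense of $U$ and $S$ integrally, and a separate lemma to control the order of the semisimple part modulo $p$.

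Your route is more economical: by raising $T$ to the $N$-th power you kill the semisimple part entirely and land on a genuine unipotent matrix over $\bZ$, so there is no need for Jordan--Chevalley, no $\bZ_p$-coefficients, and no auxiliary lemma on semisimple reductions. The trade-off is that the paper's argument actually computes the exact order of $(T \bmod p)$ for large $p$ as $p^k \cdot m$, whereas yours only yields divisibility by $p$; but that is all the lemma asks for. One small point worth making explicit: you claim the order of $I+(M \bmod p)$ is \emph{exactly} a power of $p$, not merely that some $p$-power kills it. This follows because $M \bmod p$ is nilpotent, so $(I+M)^{p^k}=I$ for $p^k\geq n$ and hence the order divides $p^k$; you use the freshman's dream to identify which power, but the fact that the order is a $p$-power at all deserves this one extra sentence.
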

\begin{proof}[Proof of lemma \ref{order of quasiunipotent matrix mod p}]
Let $T = U \cdot S$ be the Jordan-Chevalley decomposition of $T$ in $\Gl(n, \bQ)$, so that $U$ and $S$ are two commuting matrices in $\Gl(n, \bQ)$ with $U$ unipotent and $S$ semisimple. Since $T$ is quasi-unipotent, the matrix $S$ is of finite order. Note that this implies that $U$ is different from the identity since $T$ has infinite order by assumption.\\ 

Let $p$ be a prime number. If $p$ does not divide any of the denominator of the coefficients of $U$ or $S$, then both $U$ and $S$ can be viewed as elements of $ \Gl(n, \bZ_p)$ and the factorization $T = U \cdot S$ holds in $ \Gl(n, \bZ_p)$. If moreover $p$ does not divide any of the coefficients of $U - I_n$, then the order of $ (U \mod p)$ equals $p^k$ for an integer $k \geq 1$ thanks to the lemma \ref{unipotent mod p} below. If finally $p$ does not divide the order $m$ of $S$, then $(S \mod p)$ is also of order $m$ by the the lemma \ref{finite order mod p}. Therefore, for $p$ big enough, the matrices $(U \mod p)$ and $(S \mod p)$ are commuting and of coprime orders respectively $p^k$ and $m$, hence their product $(T \mod p)$ is of order $p^k \cdot m$. In particular $p$ divides the order of $(T \mod p)$ if $p$ is sufficiently big.
 \end{proof}
 
\begin{lem}\label{unipotent mod p}
If $U \in \Gl(n, \bF_p)$ is a unipotent element, then its order is a power of $p$.
\end{lem}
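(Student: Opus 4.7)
The plan is to exploit the freshman's dream in characteristic $p$. First I would write $U = I_n + N$ where $N := U - I_n \in M_n(\bF_p)$. Since $U$ is unipotent, all its eigenvalues in the algebraic closure equal $1$, so $N$ is nilpotent; in particular $N^n = 0$ by Cayley-Hamilton.

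Next, since $I_n$ and $N$ commute and we work over $\bF_p$, the binomial coefficients $\binom{p^k}{i}$ vanish in $\bF_p$ for $0 < i < p^k$, so iterating the Frobenius identity gives
\[ U^{p^k} = (I_n + N)^{p^k} = I_n^{p^k} + N^{p^k} = I_n + N^{p^k} \]
for every $k \geq 0$.

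Finally, I would choose $k$ large enough that $p^k \geq n$, whence $N^{p^k} = 0$ and thus $U^{p^k} = I_n$. The order of $U$ therefore divides $p^k$, and since $p$ is prime it is itself a power of $p$.

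There is no real obstacle here: the only subtlety is the standard verification that $\binom{p^k}{i} \equiv 0 \pmod{p}$ for $0 < i < p^k$, which follows from Kummer's theorem or an elementary induction on $k$ applied to the identity $(x+y)^p \equiv x^p + y^p \pmod{p}$.
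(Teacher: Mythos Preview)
Your proof is correct and follows essentially the same approach as the paper: both write $U = I_n + N$ with $N$ nilpotent, expand $(I_n+N)^{p^k}$ via the binomial theorem in characteristic $p$, and take $k$ large enough to annihilate the nilpotent part. Your direct appeal to the freshman's dream $(I_n+N)^{p^k} = I_n + N^{p^k}$ is slightly slicker than the paper's explicit binomial expansion, but the underlying idea is identical.
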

\begin{proof}
Let $N = U - I_n \in \mathrm{Mat}(n, \bF_p)$, so that by assumption $N^n = 0$. For any integer $k \geq 1$ we have the equality:
\[ U^{p^k} = (Id + N)^{p^k} = I_n + {p^k\choose 1} \cdot N+ {p^k\choose 2} \cdot N^2 +  \ldots  + {p^k\choose n-1} \cdot N^{n- 1} .\]
For $k$ bigger than the $p$-valuation of $(n-1)!$, $p$ divides all the ${p^k \choose i}$ for $2 \leq i \leq n-1$, so that $ U^{p^k} = I_n$ and the order of $U$ divides $p^k$.\\
\end{proof}

\begin{lem}\label{finite order mod p}
If $p$ is any prime number and $S \in \Gl(n, \bZ_p)$ is of finite order $m$ with $p \nmid m$, then $(S \mod p) \in  \Gl(n, \bF_p)$ is of order $m$ too.
\end{lem}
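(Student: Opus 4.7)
The order $d$ of $(S \bmod p)$ obviously divides $m$, since $S^m = I$ reduces to $(S \bmod p)^m = I$. So the whole game is to prove the reverse divisibility $m \mid d$, i.e., to show $S^d = I$. Setting $T := S^d$, we have $T \in \mathrm{GL}(n,\bZ_p)$ with $T \equiv I \pmod p$, and the order of $T$ is $m/d$, which again is coprime to $p$. Hence it suffices to prove the following sublemma: if $T \in \mathrm{GL}(n,\bZ_p)$ satisfies $T \equiv I \pmod p$ and $T^k = I$ for some $k$ with $p \nmid k$, then $T = I$.

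To prove this sublemma, the idea is the standard fact that the congruence kernel
\[ K := \ker\bigl( \mathrm{GL}(n,\bZ_p) \arrow \mathrm{GL}(n,\bF_p) \bigr) = I_n + p \cdot \mathrm{Mat}(n,\bZ_p) \]
is a pro-$p$ group, so it contains no nontrivial elements of order prime to $p$. Concretely, assume $T \neq I$ and write $T = I_n + p^a \cdot C$ where $a \geq 1$ is chosen maximal, so that $C \in \mathrm{Mat}(n,\bZ_p)$ with $C \not\equiv 0 \pmod p$. Expand
\[ T^k = (I_n + p^a C)^k = I_n + k \cdot p^a C + \sum_{j \geq 2} \binom{k}{j} p^{aj} C^j. \]
Each term in the sum has $p$-adic valuation at least $aj \geq a+1$ (since $a \geq 1$). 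Hence $T^k \equiv I_n + k \, p^a C \pmod{p^{a+1}}$. The hypothesis $T^k = I_n$ then forces $k\,C \equiv 0 \pmod p$, and since $p \nmid k$ this yields $C \equiv 0 \pmod p$, contradicting the maximality of $a$. Therefore $T = I$, as required.

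There is essentially no obstacle: the only point that needs care is handling the binomial expansion cleanly, and the choice of $a$ maximal (or, equivalently, factoring out the largest power of $p$ from $T - I_n$) makes the valuation bookkeeping trivial. The argument is completely parallel in spirit to Lemma \ref{unipotent mod p}, but now uses that the $p$-adic valuation of $k$ is zero rather than that $C$ is nilpotent.
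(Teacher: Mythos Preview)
Your proof is correct, but it takes a different route from the paper's. The paper argues by contradiction: if $(S \bmod p)$ had order strictly less than $m$, then some power $S'$ of $S$ would have prime order $l \neq p$ and satisfy $S' \equiv I_n \pmod p$; the factorization $(S')^l - I_n = (S' - I_n)(I_n + S' + \cdots + (S')^{l-1})$ then forces $S' = I_n$ because the second factor reduces to $l \cdot I_n$, hence is invertible in $\GL(n,\bZ_p)$. Your argument instead proves directly that the congruence kernel $\ker\bigl(\GL(n,\bZ_p) \to \GL(n,\bF_p)\bigr)$ has no nontrivial prime-to-$p$ torsion, via the $p$-adic binomial expansion. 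Your approach is the standard ``congruence kernel is pro-$p$'' argument and has the virtue of not needing the reduction to prime order; the paper's cyclotomic-style factorization is shorter once that reduction is made. Both are entirely valid and of comparable length.
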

\begin{proof}
Let $S \in \Gl(n, \bZ_p)$ of finite order $m$ with $p \nmid m$. Assume by contradiction that $(S^k \mod p) = I_n \in  \Gl(n, \bF_p)$ for some $k < m$. It follows that a well-chosen power $S^\prime$ of $S$ is of order $l$ for a prime number $l \neq p$ and satisfies $(S^\prime \mod p) = I_n \in  \Gl(n, \bF_p)$. From the equality $(S^\prime)^l = I_n$ we get that 
$(S^\prime - I_n) \cdot (I_n + S^\prime + \ldots + (S^\prime)^{l-1}) = 0$ in $\Gl(n, \bZ_p)$. Since $I_n + S^\prime + \ldots + (S^\prime)^{l-1}$ reduces modulo $p$ to the invertible matrix $ l \cdot I_n$, it is in fact invertible in $ \Gl(n, \bZ_p)$. It follows that $S^\prime = I_n$, a contradiction.
\end{proof}

\section{Proof of Theorem \ref{main result} and Corollary \ref{increasing gonality}}

Let  $\cL_\bZ$ be a torsion-free $\bZ$-local system on a complex algebraic variety $X$. Assume that $\cL_\bC$ underlies a variation of complex polarized Hodge structures $\bV = (\cL_\bC ,  \cF^{\sbt}, h)$ whose period map has discrete fibres. \\

We first prove Theorem \ref{main result}. Fix $v > 0 $. We need to prove that for any sufficiently big prime number $p$, every integral subvariety $Y$ of $X(p)$ satisfies $\vol(Y) \geq v$. \\

Clearly it is sufficient to treat the case where $X$ is integral itself. Let us check by induction on the dimension of $X$ that it is sufficient to prove the result when $X$ is an irreducible smooth affine variety. There is nothing to prove when $\dim X = 0$. If $\dim X >0$, let $H \subset X$ be a divisor containing the singular locus of $X$ and whose complementary $X-H$ is affine. If $p$ is a prime number and $Y$ is an irreducible subvariety of $X(p)$, then 
\begin{itemize}
\item either the projection of $Y$ is contained $H$, so that $Y \subset H^\prime(p)$ for an irreducible component $H^\prime$ of $H$, and we are done by induction; 
\item either the projection of $Y$ intersects $X - H$, so that a Zariski-dense open subset $Y ^o$ of $Y$ is contained in $(X-H)(p)$, and we conclude using that $\vol(Y^o) = \vol(Y)$. \\
\end{itemize}

Assume from now on that $X$ is an irreducible smooth affine variety. Fix a smooth projective variety $\bar X$ and a normal crossing divisor $D \subset \bar X$ such that $\bar X - D$ is identified with $X$. Let $\bar L_{\bV}$ be the associated Griffiths parabolic line bundle on $(\bar X, D)$. Since by assumption the associated period map has discrete fibres, ${\bar L}_{\bV}$ is ample modulo $D$, i.e. $\mathbf{B}_+({\bar L}_{\bV}) \subset D$, cf. Proposition \ref{Griffiths_line_bundle_big}. Therefore ${\bar L}_{\bV}(- \epsilon \cdot D)$ is also ample modulo $D$ for any sufficiently small $\epsilon >0$. Fix such an $\epsilon$. Then, thanks to Corollary \ref{increasing positivity-bis} and Theorem \ref{Ramification at infinity}, for every sufficiently big prime number $p$:
\begin{enumerate}
\item every subvariety $\bar Z$ of $\bar X(p)$ whose projection is not contained in $D = \mathbf{B}_+({\bar L}_{\bV}(- \epsilon \cdot D))$ satisfies $ \vol((\pi_p ^\ast ({\bar L}_{\bV}(- \epsilon \cdot D)))_ {|\bar Z})  \geq v $,
\item the map $\pi_p : \bar X(p) \arrow \bar X$ ramifies over every irreducible component of $D$ with an order divisible by $p$, so that 
\[  \pi_p ^\ast ({\bar L}_{\bV}(- \epsilon \cdot D)) \otimes \cO(E) = (\pi_p ^\ast {\bar L}_{\bV})(- p \cdot \epsilon \cdot D)   \]
for an effective divisor $E \subset \bar X(p)$ supported on $D$.
\end{enumerate}
From now on, $p$ is a prime number such that the two preceding properties are satisfied. Let $\bar Z$ be a closed irreducible subvariety of $\bar X(p)$ whose projection is not contained in $D$. Let $\tilde Z \arrow \bar Z$ be a desingularization such that the set-theoretic preimage $D_{\tilde Z}$ of $D$ by the composition of the maps $\tilde Z \arrow \bar Z \arrow \bar X(p) \arrow \bar X$ is a normal crossing divisor. From $(2)$ and using that $((\pi_p ^\ast D)_ {|\tilde Z})_{red} = D_ {\tilde Z}$, we get the inequality

\[   (\pi_p ^\ast (\bar L_{\bV}(- \epsilon \cdot D)))_ {|\tilde Z} \leq (\bar L_{\bV})_{|\tilde Z}(- p \cdot \epsilon \cdot D_{\tilde Z})  . \]

Taking volumes of both sides and using $(1)$, we get that 
\[   v \leq \vol (( \bar L_{\bV})_{|\tilde Z}(- p \cdot \epsilon \cdot D_{\tilde Z}) ) . \]

On the other hand, applying Theorem \ref{Arakelov inequality} to the $\bC$-VPHS on $\tilde Z - D_{\tilde Z}$ obtained by pullback from $\bV$, and setting $C = \frac{w^2 \cdot \rk{\cL} }{4}$, we get the inequality 
\[ (\bar L_{\bV})_{| \tilde Z} \leq   C \cdot \omega_{\tilde Z}(D_{\tilde Z}), \]
from which it follows that 
\[ \vol( (\bar L_{\bV})_{|\tilde Z} (- C \cdot D_{\tilde Z})   \leq  \vol(  C \cdot \omega_{\tilde Z})   = C^{\dim Z} \cdot \vol(Z). \]

Finally, putting everything together, we get that $v \leq C^{\dim Z} \cdot \vol(Z)$ as soon as $p \cdot \epsilon \geq C$, and this finishes the proof.\\

We now turn to the proof of Corollary \ref{increasing gonality}. Fix an integer $d >0$. For simplicity, we can reduce as before to the case where $X$ is a smooth irreducible affine variety. Let $p_i : X^d \arrow X$ denote the projection on the i-th factor, $S^d X$ denote the d-th symmetric power of $X$ and $\pi : X^d \arrow S^d X$ be the canonical map which realizes $S^dX$ as the quotient of $X^d$ by the natural action of the symmetric group $\mathfrak{S}_d$. Let also $(X^d)^o \subset X^d$ be the complementary of the small diagonals and $(S^d X)^o$ its image in $S^d X$. Then the induced finite \' etale map $\pi : (X^d)^o \arrow (S^d X)^o$ is Galois for the group $\mathfrak{S}_d$. There is a unique torsion-free $\bZ$-local system $\cM$ on $(S^d X)^o$ such that $\pi^\ast \cM \simeq \oplus_{i= 1}^d  p_i^\ast \cL_\bZ$ as $\bZ$-local systems on $(X^d)^o$. (The stalk $\cM_P$ at the point $P = \{x_1, \ldots, x_l\} \in (S^d X)^o$ is equal to $ \oplus_{i= 1}^d (\cL_\bZ)_{x_i}$.) Moreover, $\cM_\bC$ underlies by construction a $\bC$-VPHS whose associated period map has discrete fibres, so that we are in position to apply Theorem \ref{main result} to $(S^d X)^o$ and $\cM$. Fix a prime number $p$ and let $C \hookrightarrow X(p)$ be a curve. A dominant rational map $C \dasharrow \bP^1$ of degree d induces an non-constant rational map $\bP^1 \dashrightarrow S^d (X(p))$ that factorizes through $(S^d X)^o(p)$ (in the last expression the $p$-level structure is taken with respect to $\cM$). But we know from Theorem \ref{main result} that $(S^dX)^o(p)$ cannot contain a curve of genus zero if $p$ is big enough. This finishes the proof.

\section{Applications}\label{Applications}
\subsection{Application to moduli of abelian varieties and curves}
Given two positive integers $g$ and $n$, let $A_g(n)$ denote the coarse moduli space of principally polarized abelian varieties of dimension $g$ with a symplectic\footnote{\label{footnote} Considering symplectic level structures yields a connected component of the moduli space with a level structure as in the rest of the text} level-$n$ structure, and $\bV$ be the variation of Hodge structures coming from the relative cohomology in degree $1$ of the universal family (when $n \leq 2$ the variation $\bV$ is only defined on a dense Zariski-open subset of $A_g(n)$). If $\bar{A_g}(n)$ denotes the first Voronoi compactification of $A_g(n)$ and $D$ the reduced boundary divisor, then Shepherd-Barron {\cite[Theorem 4.1]{Shepherd-Barron}} proved that for any positive rational number $a$ the $\bQ$-line bundle $L_{\bV}(- a \cdot D)$ is ample exactly when $n > 12 \cdot a$ (see \cite[Section 4]{Bruni_AENS} for more details). Applying Corollary \ref{criterion_general_type}, we obtain yet another proof of the following result first proved in \cite[Theorem 1.4]{Bruni_AENS} and \cite[Theorem 5]{cadorel2018subvarieties}.

\begin{theorem}\label{General_type_A_g}
For any $g \geq 1$ and any $n >  6 \cdot g$, every subvariety of $A_g(n)$ is of general type. 
\end{theorem}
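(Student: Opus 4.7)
The plan is to apply Corollary \ref{criterion_general_type} directly, with the Voronoi compactification $\bar{A_g}(n)$ playing the role of $\bar X$ and the boundary divisor $D$ playing the role of $D$. For $n > 6g$ (in particular $n \geq 3$) the space $A_g(n)$ is a smooth quasi-projective variety, $\bar{A_g}(n)$ is smooth, $D$ has normal crossings, and the local monodromy of the weight-$1$ VHS $\bV$ around the components of $D$ is unipotent, so the hypotheses of Theorem \ref{Arakelov inequality} and Corollary \ref{criterion_general_type} are satisfied. Note that $\bV$ is a variation of \emph{real} polarized Hodge structures, and $\bar{L}_{\bV}$ coincides with the standard Hodge line bundle on $\bar{A_g}(n)$ since the parabolic structure is trivial under unipotent monodromy.

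The first step is to bound $1/\gamma_{\bV}$. Since $\bV$ is real of weight $w=1$ and rank $2g$, the refined bound from Theorem \ref{Griffiths-Schmid metric} gives
\[\frac{1}{\gamma_{\bV}} \;\leq\; \frac{w^2}{4}\cdot \rank(\cL) \;=\; \frac{g}{2}.\]
Thus $\bar{L}_{\bV}\bigl(-\tfrac{1}{\gamma_{\bV}} D\bigr)$ dominates (in the sense of pseudoeffectivity) the $\bQ$-line bundle $\bar{L}_{\bV}\bigl(-\tfrac{g}{2}\,D\bigr)$.

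The second step invokes the theorem of Shepherd-Barron quoted in the text: $L_{\bV}(- a\cdot D)$ is ample on $\bar{A_g}(n)$ precisely when $n > 12 a$. Taking $a = g/2$, we conclude that $\bar{L}_{\bV}\bigl(-\tfrac{g}{2}\,D\bigr)$ is ample as soon as $n > 6g$. In particular it is big, and adding back the non-negative correction $\bigl(\tfrac{g}{2} - \tfrac{1}{\gamma_{\bV}}\bigr)\cdot D$ preserves bigness, so $\bar{L}_{\bV}\bigl(-\tfrac{1}{\gamma_{\bV}} D\bigr)$ is itself big (in fact ample). Consequently its augmented base locus is empty, and Corollary \ref{criterion_general_type} concludes that every integral subvariety of $A_g(n)$ is of general type.

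Essentially no obstacle arises beyond bookkeeping: one just needs to check the identification of $\bar{L}_{\bV}$ with the Hodge line bundle used by Shepherd-Barron (trivial because monodromies at infinity are unipotent for $n\geq 3$, and $\det \cV$ is trivial for the symplectic polarization so $\otimes_p \det \cF^p$ reduces to $\det \cF^1$), and verify that the ampleness/bigness threshold $a=g/2$ matches the bound on $1/\gamma_{\bV}$ that the real weight-one VHS furnishes. The content of the theorem is therefore a purely numerical reconciliation of the Arakelov-type estimate of Theorem \ref{Arakelov inequality} with the classical positivity statement of Shepherd-Barron.
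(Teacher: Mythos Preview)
Your argument is exactly the paper's: apply Corollary~\ref{criterion_general_type} using the bound $\tfrac{1}{\gamma_{\bV}} \leq \tfrac{g}{2}$ for the real weight-one VHS together with Shepherd-Barron's ampleness criterion at $a = g/2$. One minor correction: ample plus effective is in general only big, not ample, so you cannot conclude that $\bar L_{\bV}(-\tfrac{1}{\gamma_{\bV}} D)$ has empty augmented base locus; what the decomposition gives is $\mathbf{B}_+\bigl(\bar L_{\bV}(-\tfrac{1}{\gamma_{\bV}} D)\bigr) \subseteq D$, and this already suffices since every subvariety of $A_g(n) = \bar{A_g}(n) \setminus D$ meets the complement of $D$.
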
  

We now deduce some new corollaries.
\begin{cor}\label{Minimal_Gonality_A_g}
Any curve in $A_g(n)$ has gonality at least $ \lceil \frac{n}{6 g} \rceil $.
\end{cor}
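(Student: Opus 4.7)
Fix an irreducible curve $C \subseteq A_g(n)$ of gonality $d$, let $\tilde C$ be its normalization, and fix a dominant morphism $\phi \colon \tilde C \to \bP^1$ of degree $d$. The strategy is to produce, out of $\phi$ and the inclusion $C \subseteq A_g(n)$, a non-constant morphism $f \colon \bP^1 \to A_{gd}(n)$. Its image will then be an irreducible rational curve, which in particular is not of general type. Theorem~\ref{General_type_A_g} applied to $A_{gd}(n)$ (with rank $gd$ in place of $g$) forces $n \leq 6gd$ as soon as such a curve exists, and this rearranges to the desired bound $d \geq \lceil n/(6g) \rceil$.

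\textbf{Construction.} The first ingredient I would build is a natural morphism of moduli spaces
\[
 \Phi \colon S^d A_g(n) \longrightarrow A_{gd}(n),
\]
defined on points by sending an unordered $d$-tuple $\{(A_i,\alpha_i)\}_{i=1}^d$ of principally polarized abelian varieties of dimension $g$ with symplectic level-$n$ structures to the product $A_1 \times \cdots \times A_d$, equipped with its product principal polarization and the direct-sum symplectic level-$n$ structure $\alpha_1 \oplus \cdots \oplus \alpha_d$. This is well-defined on the symmetric quotient because any permutation of the factors induces an isomorphism of PPAVs with level-$n$ structure. On the other hand, the standard morphism $\bP^1 \to S^d \tilde C$, $t \mapsto \phi^{-1}(t)$ (viewed as a length-$d$ effective $0$-cycle, well-defined since $\tilde C$ is smooth), composed with $S^d \tilde C \to S^d A_g(n)$ (induced by $\tilde C \to C \hookrightarrow A_g(n)$) and then with $\Phi$, yields the morphism $f \colon \bP^1 \to A_{gd}(n)$.

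\textbf{Non-constancy and main obstacle.} The map $t \mapsto \phi^{-1}(t)$ is a closed immersion $\bP^1 \hookrightarrow S^d \tilde C$, and $\tilde C \to A_g(n)$ is generically injective (being the normalization of $C$), so the composition $\bP^1 \to S^d A_g(n)$ is non-constant with $1$-dimensional image. The heart of the argument is therefore to show that $\Phi$ has finite fibers, for then $f$ will remain non-constant. This is a Krull--Schmidt-type statement: by uniqueness (up to permutation and isomorphism) of the decomposition of a PPAV into indecomposable factors, the unordered $d$-tuples $\{A_i\}$ of $g$-dimensional PPAVs whose product is a fixed PPAV $B$ of dimension $gd$ correspond to partitions of the finite multiset of indecomposable factors of $B$ into blocks of total dimension $g$ --- finitely many choices. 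Level-$n$ structures on the factors compatible with a fixed level-$n$ structure on $B$ likewise form a finite set, since the relevant choices live in the finite symplectic group $\mathrm{Sp}_{2gd}(\mathbb Z/n\mathbb Z)$. The main obstacle is precisely this verification of the finiteness of the fibers of $\Phi$: while the geometric (level-free) Krull--Schmidt for PPAVs is classical, tracking how a general symplectic level-$n$ structure on $\prod_i A_i$ splits into compatible level-$n$ structures on the factors requires some bookkeeping. Once this is in place, the image of $f$ is an irreducible rational curve in $A_{gd}(n)$ of volume $-2$; comparing with Theorem~\ref{General_type_A_g} applied to $A_{gd}(n)$ closes the argument.
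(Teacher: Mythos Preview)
Your approach is the paper's Weil-descent argument unpacked geometrically: the Weil restriction along $\phi\colon\tilde C\to\bP^1$ of the family pulled back from $A_g(n)$ is, fiberwise over a general $t\in\bP^1$, precisely your product $\prod_{c\in\phi^{-1}(t)}A_c$. You have moreover made explicit the non-constancy step (finiteness of the fibers of $\Phi$ via Krull--Schmidt for PPAVs), which the paper's one-liner leaves entirely implicit.

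There is, however, one genuine slip --- and it is worth flagging because the paper's terse proof shares it. The map $\Phi$ is \emph{not} well-defined on the symmetric quotient: the direct-sum level structure $\alpha_1\oplus\cdots\oplus\alpha_d$ depends on the ordering. Under the permutation isomorphism $\prod_i A_{\sigma(i)}\xrightarrow{\sim}\prod_i A_i$, the transported level structure becomes $(\oplus_i\alpha_i)\circ P_\sigma$, where $P_\sigma\in\mathrm{Sp}_{2gd}(\bZ/n\bZ)$ is the block permutation; for generic pairwise non-isomorphic $A_i$ no automorphism of the product realizes $P_\sigma$ on $n$-torsion, so this is a \emph{different} point of $A_{gd}(n)$. (In the paper's formulation: the $n$-torsion of $\mathrm{Res}_{L/K}A$ is $\mathrm{Res}_{L/K}\bigl((\bZ/n\bZ)^{2g}_L\bigr)$, which for $[L:K]>1$ is not a constant group scheme over $K$, so the Weil restriction does not literally carry a level-$n$ structure either.) The repair is painless: the product map $A_g(n)^d\to A_{gd}(n)$ is $\mathfrak{S}_d$-\emph{equivariant} for this block-permutation action, hence descends to $\Phi\colon S^d A_g(n)\to A_{gd}(n)/\mathfrak{S}_d$. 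For $n\geq 3$ this quotient is finite \'etale under $A_{gd}(n)$, so by Theorem~\ref{General_type_A_g} all its subvarieties are of general type as well; your Krull--Schmidt argument then shows the resulting map from $\bP^1$ is non-constant, and the contradiction follows.
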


\begin{proof}
This is a direct application of Weil descent: if $K \subset L$ is a degree $d$ extension of characteristic zero function fields, then the Weil restriction of a principally polarized abelian variety of dimension $g$ with a level-$n$ structure is a principally polarized abelian variety of dimension $d \cdot g$ with a level-$n$ structure. Therefore, a curve in $A_g(n)$ with gonality $d$ yields a rational curve in $A_{d \cdot g}(n)$ and the proof follows.
\end{proof}

A fortiori, since by  the  Brill-Noether  theorem the gonality  of a smooth projective curve $C$ satisfies $\mathrm{gon}(C) \leq \left\lfloor \frac{g(C)+1}{2} \right\rfloor$ (with equality  for $C$ generic of genus $g(C)$), we obtain a very short proof of the following effective version of results of Noguchi \cite[Main Theorem 1]{Noguchi91} and Hwang-To \cite[Theorem 1.3]{Hwang-To-uniform_boundedness}:
 \begin{cor}\label{Minimal_Genus_A_g}
Any curve in $A_g(n)$ has genus at least $ \lceil \frac{n}{3 g} \rceil -1 $.
\end{cor}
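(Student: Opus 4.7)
The plan is to combine the gonality bound of Corollary \ref{Minimal_Gonality_A_g} with the classical Brill--Noether upper bound on gonality in terms of genus. Let $C \hookrightarrow A_g(n)$ be an integral curve and let $\tilde C$ be its normalization, so that $g(C) = g(\tilde C)$ by definition of geometric genus and $\mathrm{gon}(C) = \mathrm{gon}(\tilde C)$. By Brill--Noether one has the general inequality
\[
\mathrm{gon}(\tilde C) \;\leq\; \left\lfloor \frac{g(\tilde C)+1}{2} \right\rfloor \;\leq\; \frac{g(C)+1}{2}.
\]
Combining this with the lower bound $\mathrm{gon}(C) \geq \lceil \frac{n}{6g} \rceil$ from Corollary \ref{Minimal_Gonality_A_g} yields
\[
\left\lceil \frac{n}{6g} \right\rceil \;\leq\; \frac{g(C)+1}{2},
\]
and solving for $g(C)$ gives
\[
g(C) \;\geq\; 2 \left\lceil \frac{n}{6g} \right\rceil - 1 \;\geq\; \left\lceil \frac{n}{3g} \right\rceil - 1,
\]
where the final step uses the elementary inequality $2 \lceil x \rceil \geq \lceil 2x \rceil$ applied to $x = \frac{n}{6g}$.

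There is no serious obstacle here: all the work has been done in Corollary \ref{Minimal_Gonality_A_g}, and the only additional input is the Brill--Noether estimate, which is standard and is already recalled in the paragraph preceding the statement. The only minor point to address cleanly is the passage from an integral (possibly singular) curve in $A_g(n)$ to its smooth projective model, but this is immediate from the definitions of geometric genus and gonality used throughout the paper.
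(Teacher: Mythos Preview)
Your proposal is correct and follows exactly the approach indicated in the paper: the paragraph preceding Corollary \ref{Minimal_Genus_A_g} already explains that the genus bound follows ``a fortiori'' from Corollary \ref{Minimal_Gonality_A_g} via the Brill--Noether inequality $\mathrm{gon}(C) \leq \lfloor \frac{g(C)+1}{2} \rfloor$, and you have simply written out this computation carefully, including the elementary ceiling manipulation.
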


\begin{rem}
With a similar strategy, it is possible to obtain effective results analogous to Theorem \ref{General_type_A_g} and Corollaries \ref{Minimal_Gonality_A_g} and \ref{Minimal_Genus_A_g} for any Shimura variety.
\end{rem}

Thanks to the Torelli embedding, we get the following immediate corollary.
\begin{cor}
If $M_g(n)$ denotes the coarse moduli space of genus g curves with a level-n structures (on their Jacobians), then any curve in $M_g(n)$ has gonality at least $ \lceil \frac{n}{6 g} \rceil $ and genus at least $ \lceil \frac{n}{3 g} \rceil -1 $.
\end{cor}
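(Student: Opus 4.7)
The plan is to deduce this immediately from Corollaries \ref{Minimal_Gonality_A_g} and \ref{Minimal_Genus_A_g} by exploiting the Torelli morphism. The moduli space $M_g(n)$ parametrizes smooth projective curves $C$ of genus $g$ equipped with a level-$n$ structure on $\HH^1(C,\bZ/n\bZ)$, or equivalently a symplectic level-$n$ structure on the Jacobian $J(C)$ (with the principal polarization coming from the theta divisor). Consequently there is a natural Torelli morphism
\[ \tau : M_g(n) \arrow A_g(n), \qquad (C,\ell) \mapsto (J(C),\Theta,\ell), \]
sending a pointed smooth curve of genus $g$ with level structure to its principally polarized Jacobian with the induced symplectic level-$n$ structure.

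Next, I would invoke the classical Torelli theorem, which asserts that $\tau$ is injective on geometric points (even accounting for the hyperelliptic automorphism, the level structures on the Jacobian suffice to reconstruct $C$). Hence, for any integral curve $C \subset M_g(n)$, the image $\tau(C) \subset A_g(n)$ is an integral curve and the induced morphism $C \arrow \tau(C)$ is birational. In particular, $C$ and $\tau(C)$ share the same geometric genus and the same gonality.

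Applying Corollary \ref{Minimal_Gonality_A_g} to $\tau(C) \subset A_g(n)$ then yields $\mathrm{gon}(C) = \mathrm{gon}(\tau(C)) \geq \lceil n/(6g) \rceil$, and applying Corollary \ref{Minimal_Genus_A_g} yields $g(C) = g(\tau(C)) \geq \lceil n/(3g) \rceil -1$, as required.

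The proof has essentially no obstacle since both inputs are already established in the paper; the only point that requires a brief justification is the injectivity of $\tau$ on points of the coarse moduli spaces. This is well-known but worth mentioning explicitly: the standard Torelli theorem recovers $C$ from $(J(C),\Theta)$ up to isomorphism, and the compatibility of level structures is built into the definition of $\tau$, so no additional work is needed beyond citing the Torelli theorem.
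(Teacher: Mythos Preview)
Your argument is correct and follows exactly the same approach as the paper: the paper's entire proof is the single sentence ``Thanks to the Torelli embedding, we get the following immediate corollary,'' and you have simply spelled out what that sentence means. Your extra care about injectivity of $\tau$ on coarse moduli points (hence birationality of $C \arrow \tau(C)$, hence equality of gonality and geometric genus) is the right justification and is implicit in the paper's appeal to the Torelli embedding.
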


\subsection{Application to moduli spaces of polarized Calabi-Yau varieties}
Consider a smooth projective morphism $f : \mathcal{X} \arrow \cM$ between complex algebraic varieties (or more generally between separated finite type Deligne-Mumford stacks). Fix a positive integer $k$ and assume that the period map associated to the variation of Hodge structures on the $\bZ$-local system $\cL := R^kf_\ast \bZ \slash (\text{torsion})$ is quasi-finite. By a result of Griffiths \cite{GriffithsIII}, this is for example the case when the fibres of $f$ have a trivial canonical bundle. For any prime number $p$, we denote by $\cM(p)$ the finite \'etale cover of $\cM$ constructed from $\cL \otimes_{\bZ} \bF_p$ as in the introduction. Then applying Theorem \ref{main result} and Corollary \ref{increasing gonality}, we get 
\begin{theorem}\label{inf_Torelli}
The minimal volume (resp. gonality) of an integral subvariety (resp. curve) of $\cM(p)$ tends to infinity with the prime number $p$. 
\end{theorem}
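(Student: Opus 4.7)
The plan is to apply Theorem~\ref{main result} and Corollary~\ref{increasing gonality} directly, with $X = \cM$ and $\cL_\bZ = R^k f_\ast \bZ / (\text{torsion})$. Thus the task reduces entirely to checking the hypotheses of those results in the present setting.

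First I would observe that $\cL_\bZ$ is a torsion-free $\bZ$-local system by construction, and that $\cL_\bZ \otimes_\bZ \bC$ underlies a polarizable $\bC$-variation of Hodge structures of weight $k$: the Hodge filtration is the fibrewise Hodge filtration on $H^k$ of the smooth fibres of $f$, and a polarization is built in the standard way from any fixed relative ample class on $\mathcal{X}/\cM$ together with Hard Lefschetz and the Hodge--Riemann bilinear relations on each primitive summand (take the orthogonal sum of primitive pieces of different Lefschetz degrees). Finally, the period map of this VPHS is quasi-finite by hypothesis, so in particular it has discrete fibres. Hence the assumptions of Theorem~\ref{main result} and Corollary~\ref{increasing gonality} are satisfied, and the present theorem follows as an immediate translation: Theorem~\ref{main result} gives the statement about $\vol(Y)$ for integral subvarieties $Y \subseteq \cM(p)$, and Corollary~\ref{increasing gonality} gives the one about $\mathrm{gon}(C)$ for integral curves $C \subseteq \cM(p)$.

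The only genuine subtlety is that $\cM$ is allowed to be a separated finite-type Deligne--Mumford stack rather than a variety, whereas Theorem~\ref{main result} is phrased in the variety setting. I would handle this by the standard level-structure rigidification: for all but finitely many primes $p$, the cover $\cM(p)$ is representable by a quasi-projective complex scheme. Indeed, any non-trivial automorphism $\sigma$ of a fibre $f^{-1}(s)$ acts non-trivially on the stalk of $\cL_\bZ$ at $s$ --- otherwise the period map could not be quasi-finite in a neighbourhood of $s$, since $\sigma$ preserves the Hodge structure --- and a non-trivial action on a finitely generated torsion-free $\bZ$-module remains non-trivial mod $p$ for all but finitely many primes. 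Since the conclusion of the theorem is only asymptotic in $p$, working with $\cM(p)$ as a scheme for large $p$ is sufficient. Aside from this DM-stack bookkeeping, there is no real obstacle: the entire substance of the theorem is already contained in Theorem~\ref{main result} and Corollary~\ref{increasing gonality}.
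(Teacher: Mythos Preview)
Your proposal is correct and is exactly the paper's approach: the paper derives Theorem~\ref{inf_Torelli} in one line as a direct application of Theorem~\ref{main result} and Corollary~\ref{increasing gonality}, without spelling out the verification of hypotheses or the stack issue. Your added checks are welcome elaboration, not a different route.

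One small caveat on your DM-stack paragraph: the sentence ``otherwise the period map could not be quasi-finite in a neighbourhood of $s$'' is not a valid justification. Quasi-finiteness of the period map is a statement about distinct points of the base having distinct periods; it says nothing about whether an automorphism of a single fibre acts faithfully on its cohomology (such an automorphism fixes both the base point and the Hodge structure, so no contradiction arises). The conclusion you want---that $\cM(p)$ is a scheme for $p\gg 0$---is usually obtained instead from finiteness of the automorphism group of a polarized fibre together with the observation that only finitely many primes can occur as orders of eigenvalues of these automorphisms on $\cL_\bZ$ (or, in the Calabi--Yau case, by the classical Serre-type lemma the paper alludes to with ``it is a smooth quasi-projective complex variety for $p\geq 3$''). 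Since the statement is asymptotic in $p$ and the paper itself does not work out the stack case, this does not undermine your overall argument.
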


This applies in particular to $\cM(p)$ the moduli stack of polarized Calabi-Yau varieties equipped with a level-p structure, where by definition a level-p structure on a smooth projective complex variety $X$ of dimension $d$ is a basis of the $\bF_p$-vector space $\HH^d(X, \bF_p)$.

\bibliographystyle{alpha}
\bibliography{biblio-VHS}

\end{document}